\numberwithin{equation}{section}
\DeclareSymbolFontAlphabet{\mathbb}{AMSb}
\DeclareSymbolFontAlphabet{\mathbbl}{bbold}
\newtheorem{thm}{Theorem}[section]
\newtheorem{thmx}{Theorem}
\newtheorem{lem}[thm]{Lemma}
\newtheorem{prop}[thm]{Proposition}
\newtheorem{cor}[thm]{Corollary}
\theoremstyle{definition}
\newtheorem{nota}[thm]{Notation}
\newtheorem{rem}[thm]{Remark}
\theoremstyle{remarks}
\newtheorem*{rem*}{Remarks}
\newtheoremstyle{case}{}{}{}{}{}{:}{ }{}
\theoremstyle{case}
\newcommand{\sym}[1]{\mathfrak{S}_{#1}}
\renewcommand{\O}{\mathcal{O}}
\newcommand{\B}{\mathcal{B}}
\newcommand{\F}{\mathbb{F}}
\newcommand{\K}{\mathbb{K}}
\title[Complexities of Simple Modules of Symmetric groups]{On the complexities of some simple modules of symmetric groups}
\begin{document}
\author{Yu Jiang}
\address[Y. Jiang]{Division of Mathematical Sciences, Nanyang Technological University, SPMS-MAS-05-34, 21 Nanyang Link, Singapore 637371.}
\email[Y. Jiang]{jian0089@e.ntu.edu.sg}


\begin{abstract} Let $p$ be a prime. In this paper, we compute the complexities of some simple modules of symmetric groups labelled by two-part partitions. Most of the simple modules considered here are contained in the $p$-blocks with non-abelian defect groups.
\end{abstract}

\maketitle
\smallskip
\noindent \textbf{Keywords.} Complexity, Generic Jordan type, Symmetric group, Simple modules of symmetric groups

\section{introduction}
Let $G$ be a finite group and $\F$ be an algebraically closed field of positive characteristic $p$. The complexity of an $\F G$-module $M$, defined by Alperin and Evens in \cite{JAlperin} and denoted by $c_G(M)$, describes the growth rate of a minimal projective resolution of $M$. However, the invariant is usually very difficult to compute and is still poorly understood even in the context of group algebras of symmetric groups.

Though some advancements have been made in calculating complexities of Young permutation modules, Young modules, and some Specht modules in \cite{DHDN}, \cite{Lim1}, \cite{Lim2} and \cite{Lim3}, little knowledge is known in calculation of complexities of simple modules of symmetric groups. According to our knowledge, the non-trivial cases that were computed mainly focus on the completely splitting representations, the simple modules labelled by hook partitions and some simple modules lying in the $p$-blocks with abelian defect groups (see \cite{DHDN}, \cite{Lim2} and \cite{LimTan}).

The paper contributes to calculation of complexities of simple modules of symmetric groups contained in the $p$-blocks with non-abelian defect groups. Let $C_p$ be a cyclic group of order $p$. Using a theorem about existence of some short exact sequences of $\F C_p$-modules (see Theorem \ref{InsertionRule}), rank varieties of modules and generic Jordan types of modules, we obtain the complexities of some simple modules of symmetric groups labelled by two-part partitions. Following the notations of \cite{GJ1}, let $\sym{n}$ be the symmetric group on $n$ letters and $\lambda$ be a $p$-regular partition of $n$. Denote by $D^\lambda$ the simple $\F \sym{n}$-module labelled by $\lambda$. We will show the following

\begin{thmx}\label{T;A}
Let $n$ be a positive integer.
\begin{enumerate}
\item [\em (i)]Let $\lambda:=(n-1,1)$ be a $p$-regular partition with $p$-weight $w$. Then \[ c_{\sym{n}}(D^{\lambda})=\begin{cases} w-1, &\text{if}\ p=2,\ n\equiv 2\pmod 4,\\
w, & \text{otherwise}.\end{cases}\]
\item [\em (ii)]Let $\lambda:=(n-2,2)$ be a $p$-regular partition with $p$-weight $w$. Then  \[ c_{\sym{n}}(D^{\lambda})=\begin{cases}
w-1, & \text{if}\ p=2,\ n=5\ \text{or}\ 6,\\
w, & \text{otherwise}.\end{cases}\]
\end{enumerate}
\end{thmx}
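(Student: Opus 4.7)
My plan is to combine an upper bound for complexity coming from the $p$-rank of the defect group with a lower bound obtained by restricting to a maximal elementary abelian $p$-subgroup of that defect group and computing the dimension of the associated rank variety via generic Jordan types, using Theorem~\ref{InsertionRule} as the main combinatorial engine. The upper bound is immediate: if $\lambda$ has $p$-weight $w$, the $p$-block of $\F\sym{n}$ containing $D^\lambda$ has defect group $D$ isomorphic to a Sylow $p$-subgroup of $\sym{pw}$, which has $p$-rank $w$. Hence $c_{\sym{n}}(D^\lambda) \leq w$ in all situations. Outside the exceptional cases this will be matched by a corresponding lower bound; inside them we must show instead that the complexity equals exactly $w-1$.

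For part (i) I would use the explicit realisation of $D^{(n-1,1)}$ inside the natural permutation module $M^{(n-1,1)} \cong \F^n$: when $p \nmid n$ we have $D^{(n-1,1)} \cong S^{(n-1,1)}$, the kernel of the sum map, while when $p \mid n$ we have $D^{(n-1,1)} \cong S^{(n-1,1)}/\F$, the heart of $M^{(n-1,1)}$. Taking $E = \langle \sigma_1, \ldots, \sigma_w\rangle \leq \sym{n}$ generated by $w$ commuting disjoint $p$-cycles, the restriction $M^{(n-1,1)}\downarrow_E$ decomposes as a direct sum of $w$ transitive permutation modules $\Ind_{E_i}^E \F$ (one for each orbit of size $p$) together with $n - pw$ copies of $\F$. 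The short exact sequences above then let us read off the generic Jordan type of $D^{(n-1,1)}\downarrow_E$ on a generic cyclic shift subgroup and hence the dimension of its rank variety.

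For part (ii), the module $D^{(n-2,2)}$ lacks a comparably concrete realisation, so I would instead apply Theorem~\ref{InsertionRule} iteratively. The idea is to build up $D^{(n-2,2)}$ via short exact sequences from modules whose restrictions to a cyclic subgroup are already understood, such as the Specht module $S^{(n-2,2)}$ and its composition factors (which include $D^{(n-1,1)}$ and, in certain characteristics, the trivial module), and to propagate the generic Jordan type information through the resulting sequences of $\F C_p$-modules. Having determined the generic Jordan type of $D^{(n-2,2)}\downarrow_{C_p}$, one combines it with the elementary abelian restriction to recover the dimension of the rank variety, and hence the complexity.

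The main obstacle will be the exceptional characteristic $2$ cases. For $n \equiv 2 \pmod 4$ in part (i), one has $n = 2m$ with $m$ odd, so $2 \mid n$ and $D^{(n-1,1)}$ has dimension $n-2 \equiv 0 \pmod 4$; this extra parity forces additional projective summands in the restriction to the maximal rank $E$, collapsing the rank variety by one dimension. Closing the proof then requires both verifying this projectivity at rank $w$ and exhibiting a non-projective restriction on some rank $w-1$ subgroup to ensure the complexity does not drop further. The small cases $n=5,6$ of part (ii) I would handle by direct calculation using the known structure of $\F\sym{5}$ and $\F\sym{6}$, but identifying them as a genuine degeneration of the general pattern is where I expect the most delicate work to lie.
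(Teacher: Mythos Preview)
Your overall strategy matches the paper's for the odd-prime and easy cases, but there is a genuine gap in your treatment of $p=2$ that would prevent the proof from closing. The issue is that for $p=2$ the symmetric group $\sym{n}$ has \emph{several} non-conjugate maximal elementary abelian $2$-subgroups of the maximal rank $w=n/2$: besides the subgroup $E_{n/2}$ generated by $w$ disjoint transpositions that you use, there are the ``mixed'' subgroups $K_\ell\times F_\ell$ built from $\ell$ copies of the regular $(C_2)^2\hookrightarrow\sym{4}$ together with $n/2-2\ell$ disjoint transpositions. Since complexity is the maximum of $\dim\mathrm{V}_E^{\#}$ over \emph{all} such $E$, showing that $D^{(n-1,1)}{\downarrow_{E_{n/2}}}$ is generically free only tells you that this particular rank variety is proper; it says nothing about the others, and hence does not give the upper bound $c\le w-1$ when $n\equiv 2\pmod 4$. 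The paper handles this by an explicit matrix computation of the action of $u_\alpha-1$ on a basis of $D^{(n-1,1)}$ for every class $K_\ell\times F_\ell$, determining each rank variety as a hypersurface and reading off its dimension.

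There is a second, related problem: it turns out that $D^{(n-1,1)}{\downarrow_{E_{n/2}}}$ is generically free for \emph{every} even $n$, not only for $n\equiv 2\pmod4$ (your parity heuristic is misleading here --- the short exact sequences you write down together with Theorem~\ref{InsertionRule} leave two possible stable types, and do not by themselves decide which one occurs). Consequently your plan gives no lower bound $c\ge w$ when $n\equiv 0\pmod 4$; the paper obtains it by passing instead to $K_{n/4}$ and computing the stable generic Jordan type there to be $[1]^2$. This generic freeness of $D^{(n-1,1)}{\downarrow_{E_{n/2}}}$ (proved in the paper by a branching argument, not by Jordan-type bookkeeping) is in fact the crucial input that makes the short-exact-sequence argument for part~(ii) go through when $p=2$ and $n\equiv 0,2\pmod 4$, so the two parts are more tightly linked than your outline suggests.
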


\begin{thmx}\label{T;C}
Let $m$, $n$, $s$ be three non-negative integers. Let $\lambda:=(n-mp-s,mp+s)$ be a $p$-regular partition with $p$-weight $w$. Then $c_{\sym{n}}(D^{\lambda})=w$ if one of the following hypotheses holds.
\begin{enumerate}
\item [\em (i)]$2<p$, $0\leq m\leq 1$, $0\leq s<p$.
\item [\em (ii)]$3<p$, $m=1$, $s=p$.
\end{enumerate}
\end{thmx}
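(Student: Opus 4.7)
The upper bound $c_{\sym{n}}(D^\lambda)\le w$ is immediate, since $D^\lambda$ belongs to a $p$-block of $\sym{n}$ of weight $w$, whose defect group is a Sylow $p$-subgroup of $\sym{pw}$ with $p$-rank equal to $w$, and the complexity of a module is bounded by the $p$-rank of its defect group. The content of the theorem is therefore the matching lower bound $c_{\sym{n}}(D^\lambda)\ge w$, which I plan to establish by exhibiting, inside a Young subgroup of $\sym{n}$, an elementary abelian $p$-subgroup $E$ of rank $w$ built from $w$ pairwise disjoint $p$-cycles, and by showing that the generic Jordan type of $\Res^{\sym{n}}_E D^\lambda$ contains a Jordan block of size strictly less than $p$. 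By standard rank variety theory this forces $\dim V_E^r(\Res^{\sym{n}}_E D^\lambda)=w$, and hence the desired bound via Quillen's theorem.

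For case (i) I would handle the subcases by induction on $m$ (and hence on $w$). When $m=0$ and $s\in\{0,1,2\}$ the result is either trivial ($s=0$) or provided by Theorem \ref{T;A}, whose exceptional situations are excluded by the standing hypothesis $p>2$. For $3\le s<p$ with $m=0$, and for $0\le s\le p-1$ with $m=1$, the strategy is to realise $D^\lambda$ as a subquotient of an appropriate Specht module, restrict to $E$, and invoke Theorem \ref{InsertionRule} to produce a short exact sequence of $\F C_p$-modules that isolates a non-projective summand of the generic Jordan type. The step from $m=0$ to $m=1$ is obtained by enlarging $E$ by one further $p$-cycle and propagating the non-projective Jordan block through this short exact sequence.

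Case (ii), with $\lambda=(n-2p,2p)$ and $p>3$, is the main obstacle. Here the restriction of $D^\lambda$ to a generic cyclic shifted subgroup $\langle\alpha\rangle\subseteq E$ sits in a more intricate extension whose constituents resemble simple modules of the form $D^{(n'-p,p)}$ and $D^{(n'-1,1)}$ for various $n'$; controlling the resulting generic Jordan type requires a delicate application of Theorem \ref{InsertionRule} yielding a short exact sequence whose outer terms have understood and non-cancelling Jordan types. The technical heart of the proof will be verifying that the non-projective block identified at the outer terms actually survives in $D^{(n-2p,2p)}$, and this is precisely where the hypothesis $p>3$ enters: smaller primes introduce coincidences which cancel the non-projective part and break the argument.
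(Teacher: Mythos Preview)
Your proposal rests on a misconception that invalidates the inductive strategy. You write that you will handle case (i) ``by induction on $m$ (and hence on $w$)'' and that ``the step from $m=0$ to $m=1$ is obtained by enlarging $E$ by one further $p$-cycle.'' But the partitions $(n-s,s)$ and $(n-p-s,p+s)$ differ by a horizontal rim $p$-hook and therefore have the \emph{same} $p$-core and the \emph{same} $p$-weight $w$ (this is exactly the content of Lemma~\ref{L:pcore}, which depends on $s$ and $r$ but not on $m$). Increasing $m$ does not increase $w$; the elementary abelian subgroup you need for $(n-p-s,p+s)$ has exactly the same rank as the one for $(n-s,s)$, so there is no ``extra $p$-cycle'' to add and no short exact sequence of the kind you describe through which to propagate a non-projective block. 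The case $m=1$ (and likewise case (ii)) cannot be reduced to $m=0$ in this way.

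The paper proceeds quite differently, and two of its principal tools are absent from your outline. First, a large fraction of the subcases is disposed of by computing $\dim_\F D^\lambda \pmod p$ via Lucas's theorem together with the decomposition numbers in Theorem~\ref{T;James} and equation~(2.4); whenever this dimension is coprime to $p$ one gets $c_{\sym{n}}(D^\lambda)=w$ immediately. Second, the remaining subcases are organized not by $m$ but by the residue $r$ of $n$ modulo $p$ (and modulo $p^2$) relative to $s$, and are handled by (a) the modular branching rule for two-row simples (Theorem~\ref{T;MBR}), which produces a direct summand $D^\mu\mid D^\lambda{\downarrow_{\sym{n'}}}$ with $\mu$ already treated, so that non-generic-freeness of $D^\mu{\downarrow_E}$ transfers to $D^\lambda{\downarrow_E}$ via Proposition~\ref{P;sjt}~(iii); and (b) Specht filtrations of Young modules (Lemma~\ref{R;Spechtfiltration}) combined with explicit stable generic Jordan type computations (Lemmas~\ref{L; YP1}--\ref{L;p=3,YP3}) to derive a contradiction from generic freeness in the hardest subcases. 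Theorem~\ref{InsertionRule} does appear, but only at this last stage and in a form rather different from what you sketch.
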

The organization of the paper is as follows. In Section two, we set up the notations used in the paper and give a brief summary of the required knowledge. Section three presents the proof of Theorem \ref{T;A} and Section four contains the proof of Theorem \ref{T;C}.

\section{preliminaries}
Let $\F$ be an algebraically closed field with positive characteristic $p$ throughout the whole paper. For a given finite group $G$, all $\F G$-modules considered in the paper are finitely generated left $\F G$-modules. Induction and restriction of $\F G$-modules are presented by $\uparrow$ and $\downarrow$. The dual of an $\F G$-module $M$ is denoted by $M^*$. By abusing notations, we also use $\F$ to denote the trivial $\F G$-module throughout the whole paper.

\subsection{Representation theory of finite groups}
We assume that the reader is familiar with modular representation theory of finite groups. For a general background of the topic, one may refer to \cite{JAlperin1} or \cite{HNYT}.

For a given finite group $G$, let $M$ and $N$ be two $\F G$-modules. Write $N\mid M$ if $N$ is isomorphic to a summand of $M$, i.e., $M\cong L\oplus N$ for some $\F G$-module $L$. If $N$ is indecomposable, for a decomposition of $M$ into a direct sum of indecomposable summands, the number of indecomposable summands that are isomorphic to $N$ is well defined by Krull-Schmidt Theorem and is denoted by $[M:N]$. Furthermore, if $N$ is simple, the number of composition factors of $M$ that are isomorphic to $N$ is well defined by Jordan-H\"{o}lder Theorem and is denoted by $(M:N)$. Let $m$ be a positive integer and $N_i$ be an $\F G$-module for all $1\leq i\leq m$. Write $M\sim \sum_{i=1}^m N_i$ to mean that there exists a filtration of $M$ such that all quotient factors are exactly isomorphic to these $N_i$'s. The Jacobson radical of $M$ is denoted by $\text{Rad}(M)$.

Assume further that $M$ is an indecomposable $\F G$-module and $P$ is a subgroup of $G$. Following \cite{JGreen}, say $P$ a vertex of $M$ if $P$ is a minimal (with respect to inclusion of subgroups) subgroup of $G$ subject to the condition that $M\mid M{\downarrow_{P}}{\uparrow^G}$. All vertices of $M$ are known to be $p$-subgroups of $G$ and are $G$-conjugate to each other. Given a vertex $P$ of $M$, there exists some indecomposable $\F P$-module $S$ such that $M\mid S{\uparrow^G}$. It is called a source of $M$. All sources of $M$ are $N_{G}(P)$-conjugate to each other, where $N_{G}(P)$ denotes the normalizer of $P$ in $G$.
\subsection{Rank varieties and complexities of modules}
The $p$-rank of a finite group $G$ is defined to be the largest non-negative integer $m$ subject to the condition that $G$ has an elementary abelian $p$-subgroup of order $p^m$. Let $E$ be an elementary abelian $p$-group of order $p^n$ with a generator set $\{ g_1,\cdots,g_n\}$ and $M$ be an $\F E$-module. For each non-zero point $\alpha$ in the affine space $\F ^n$, one defines $u(\alpha)$ to be $1+\sum_{i=1}^n\alpha_i(g_i-1)$. Note that $\langle u(\alpha)\rangle$ is a cyclic group of order $p$, which means that $M{\downarrow_{\langle u(\alpha)\rangle}}$ is projective if and only if $M{\downarrow_{\langle u(\alpha)\rangle}}$ is free. The rank variety of $M$ over $E$, denoted by $\mathrm{V}_E^{\#}(M)$, is defined to be the set
$$\{0\neq \alpha\in \F^n: M\ \text{is not free as an}\ \F\langle u(\alpha)\rangle\text{-module}\}\cup\{0\}.$$
It is independent of the choice and order of generators of $E$ up to isomorphism. For a fixed ordered generator set of $E$, the $\F E$-modules that are mutually isomorphic have same rank variety. As an algebraic variety, the dimension of $ \mathrm{V}_E^{\#}(M)$ is denoted by $\dim \mathrm{V}_E^{\#}(M)$. Notice that $\dim\mathrm{V}_E^{\#}(M)\leq n$ and $\dim\mathrm{V}_E^{\#}(\F)=n$ by definition. Given a finite group $G$ and an $\F G$-module $M$, if $G$ contains $E$, it is well-known that $\dim\mathrm{V}_E^{\#}(M{\downarrow_E})$ is an invariant up to $G$-conjugation of $E$. The complexity of $M$, denoted by $c_G(M)$, is defined to be the non-negative integer
$$ \max_{E\in \mathcal{E}^{\text{max}}}\{\dim \mathrm{V}_E^{\#}(M{\downarrow_E})\},$$
where $\mathcal{E}^{\text{max}}$ is the set of representatives of all maximal elementary abelian $p$-subgroups of $G$ up to $G$-conjugation. Observe that $c_G(\F)$ is the $p$-rank of $G$. The definition provided here is quite convenient for us in the following discussion. It is also of course equivalent to the classical definition of the complexity of $M$ given in \cite{JAlperin} by \cite[\S 1 Theorem]{JAlperin}, \cite[Theorem 1.1]{AS} and \cite[Theorem 7.6]{Carlson}. If $M$ is an indecomposable $\F G$-module, it is also well-known that $c_G(M)$ is upper bounded by the $p$-rank of defect groups of the $p$-block containing $M$.
Moreover, it attains the upper bound if $p\nmid\dim_\F M$.

\subsection{Generic Jordan types of modules}
Let $C_p$ be a cyclic group of order $p$ and $M$ be an $\F C_p$-module. According to representation theory of cyclic groups, all indecomposable $\F C_p$-modules are exactly the Jordan blocks with all eigenvalues $1$ and size between $1$ to $p$. So $M$ can be viewed as a direct sum of the Jordan blocks. We call the direct sum of the Jordan blocks the Jordan type of $M$ and denote it by $[1]^{n_1}\cdots [p]^{n_p}$, where $[i]$ denotes a Jordan block of size $i$ and $n_i$ means the number of Jordan blocks of size $i$ in the sum. We need the following result about existence of some short exact sequences of $\F C_p$-modules as preparation. It is a special part of \cite[Theorems 4.1, 4.2]{Klein}. For our purpose, we present a version that is convenient for us to compute.
\begin{thm}\label{InsertionRule}
Let $C_p$ be a cyclic group of order $p$. Let $U$, $V$ and $W$ be three $\F C_p$-modules, where $U$ has Jordan type $[a]$ and $V$, $W$ have Jordan types $[1]^{m_1}\cdots[p]^{m_p}$, $[1]^{n_1}\cdots[p]^{n_p}$ respectively. Then a short exact sequence
$$ 0\rightarrow U\rightarrow V\rightarrow W\rightarrow 0$$
exists if and only if one of the following situations holds.
\begin{enumerate}
\item [\em (i)]$m_a\neq 0$ and we have \[n_i=\begin{cases} m_i-1, &\text{if}\ i=a,\\
m_i, & \text{otherwise}.\end{cases}\]
\item [\em (ii)]There exist some positive integer $c$ such that $a<c\leq p$ and $m_c\neq 0$ and a set of mutually distinct positive integer $3$-tuples (may be empty) $$\bigcup_{u=1}^t\{(i_u,i_u-q_u+1,\ell_u)\},$$ where
    \begin{align*}
    & 1\leq q_t\leq\cdots \leq q_1<a,\ 0\leq i_t-q_t\leq\cdots\leq i_1-q_1\leq c-a,\\
    &1\leq i_t\leq\cdots\leq i_1<c\ \text{and}\ 1\leq \ell_u\leq m_{i_u}.
    \end{align*}
    In the situation, we have
    \[n_i=\begin{cases} m_i-\delta_{i,c}-\sum_{u=1}^t\delta_{i,i_u}+\delta_{i,c-a+q_1}+\sum_{u=1}^{t-1}\delta_{i,i_u-q_u+q_{u+1}}+
    \delta_{i,i_t-q_t}, &\text{if}\ 1\leq i\leq c,\\
    m_i & \text{otherwise},\end{cases}\]
    where $\delta_{x,y}$ is the usual Kronecker delta for two given integers $x$ and $y$. If the set of $3$-tuples is empty, we have $n_c=m_c-1$, $n_{c-a}=m_{c-a}+1$ and $n_i=m_i$ if $i\neq c,\ c-a$.
\end{enumerate}
\end{thm}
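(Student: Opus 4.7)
The plan is to reformulate existence of the short exact sequence as existence of a cyclic $\F C_p$-submodule of $V$ isomorphic to $[a]$ with quotient of the prescribed Jordan type, and then to analyse such submodules by explicit construction and change of basis. Set $x=g-1$ where $g$ generates $C_p$, so $\F C_p\cong\F[x]/(x^p)$ and $[i]\cong\F[x]/(x^i)$. Fix a decomposition $V=\bigoplus_{i,k}V_{i,k}$ with $V_{i,k}\cong[i]$ generated by $e_{i,k}$ for $1\le k\le m_i$. An injection $U\hookrightarrow V$ is precisely a choice of $v\in V$ with $x^av=0$ and $x^{a-1}v\neq 0$; writing $v=\sum p_{i,k}(x)e_{i,k}$ with $\deg p_{i,k}<i$, one then has $W\cong V/\F C_p\cdot v$.

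For sufficiency, Case (i) is handled by $v=e_{a,k_0}$, which makes $\F C_p\cdot v$ the summand $V_{a,k_0}$ and $W$ the complementary sum, matching the stated formula. For Case (ii), given the data $c$ and the tuples $\{(i_u,i_u-q_u+1,\ell_u)\}_{u=1}^{t}$, the natural construction is
\[
v=x^{c-a}e_{c,k_0}+\sum_{u=1}^{t}x^{i_u-q_u}e_{i_u,\ell_u}
\]
for any $k_0$ with $m_c\ge 1$. The constraints $q_u<a$ force $x^av=0$, while the surviving leading term $x^{c-1}e_{c,k_0}$ in $x^{a-1}v$ is non-zero, so $\F C_p\cdot v\cong[a]$. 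Computing $V/\F C_p\cdot v$ then proceeds by changing basis to replace $e_{c,k_0}$ with $v$ and cascading the relations $x^{i_u-q_u+j}e_{i_u,\ell_u}$ (for $0\le j<q_u$) through the tail blocks. An induction on $t$ yields the decomposition into cyclic summands of sizes $c-a+q_1$, $\{i_u-q_u+q_{u+1}\}_{u<t}$, and $i_t-q_t$, alongside the untouched summands; a dimension check confirms that the total dimension removed is exactly $a$.

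For necessity, given $v=\sum p_{i,k}(x)e_{i,k}$ generating a submodule isomorphic to $[a]$, the conditions force $v_x(p_{i,k})\ge\max(0,i-a)$ and pick out at least one $(i,k)$ with $i\ge a$ and $p_{i,k}$ having non-zero coefficient at exponent $i-a$. Let $c$ be the largest such $i$. If $c=a$, a column reduction inside the $[a]$-isotypic summand confines $v$ to a single $V_{a,k_0}$ modulo corrections in larger blocks which can be absorbed by $\F C_p$-automorphisms of $V$, giving Case (i). If $c>a$, after rescaling $v$ to have leading term $x^{c-a}e_{c,k_0}$ one iteratively strips the next contribution from blocks of size $i$ with $a\le i<c$; the extraction order produces the weakly decreasing sequences $q_1\ge\cdots\ge q_t$ and $i_1\ge\cdots\ge i_t$ required by the theorem's monotonicity constraints, and the inequalities $i_u-q_u\le c-a$ are forced by the vanishing $x^av=0$.

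The principal difficulty is the Case (ii) quotient computation: one must verify that $V/\F C_p\cdot v$ decomposes as a direct sum of cyclic summands with exactly the stated block sizes, without hidden extra indecomposable pieces arising from the cascading relations. I expect to handle this by induction on $t$, using the fact that removing the innermost tuple $(i_t,i_t-q_t+1,\ell_t)$ reduces to a smaller Case (ii) instance whose quotient differs from the full one by a single controlled cyclic summand of size $i_t-q_t$, the base case $t=0$ being the elementary computation that $[c]/x^{c-a}[c]\cong[c-a]$.
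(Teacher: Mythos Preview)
The paper does not supply a proof; the theorem is quoted as a reformulation of \cite[Theorems~4.1, 4.2]{Klein}, whose argument runs through the identification of extension numbers for $\F[x]/(x^p)$-modules with Hall polynomials and thence with Littlewood--Richardson coefficients for products of Schur functions. Your approach is genuinely different: a direct linear-algebraic classification of cyclic submodules $\F C_p\cdot v\cong[a]$ inside $V$ up to automorphism, followed by an explicit quotient computation. The advantage of your route is that it is self-contained and stays entirely within module theory; the cost is the bookkeeping in the Case~(ii) quotient computation and the normal-form reduction, which Klein's combinatorial machinery absorbs wholesale into the LR rule.

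One correction to your necessity sketch: you restrict the iterative stripping to ``blocks of size $i$ with $a\le i<c$'', but the statement (and your own normal form $v=x^{c-a}e_{c,k_0}+\sum_u x^{i_u-q_u}e_{i_u,\ell_u}$) permits $i_u<a$. Contributions from small blocks cannot in general be absorbed by automorphisms of $V$: for $V=[3]\oplus[1]$ and $a=2$, the elements $xe_1+e_2$ and $xe_1$ yield non-isomorphic quotients ($[2]$ versus $[1]^2$), so the $[1]$-term is essential to the normal form. Once you allow all $i<c$, you still owe an argument that the extracted data can be arranged to satisfy the monotonicity constraints $q_1\ge\cdots\ge q_t$ and $i_1-q_1\ge\cdots\ge i_t-q_t$ simultaneously; this is exactly where Klein's treatment leans on the lattice-word condition in the LR rule, and in your direct approach it will require some care.
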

For a general case where the $\F C_p$-module $U$ in Theorem \ref{InsertionRule} is decomposable, if a short exact sequence in Theorem \ref{InsertionRule} exists, one can use Theorem \ref{InsertionRule} and Third Isomorphism Theorem to determine all allowable Jordan types of $W$.

Now take $E$ to be an elementary abelian $p$-group of order $p^n$ with a generator set $\{g_1,\cdots,g_n\}$ and $M$ to be an $\F E$-module. Let $\K$ denote the extension field $\F(\alpha_1,\cdots,\alpha_n)$ over $\F$ where $\{\alpha_1,\cdots,\alpha_n\}$ is a set of indeterminates. Denote by $u_{\alpha}$ the element $1+\sum_{i=1}^n\alpha_i(g_i-1)$ of $\K E$. Note that $\langle u_\alpha\rangle$ is a cyclic group of order $p$ and $M$ can be viewed as a $\K \langle u_\alpha\rangle$-module naturally. Therefore, Jordan type of $M{\downarrow_{\langle u_\alpha\rangle}}$ is meaningful and is called generic Jordan type of $M$. Wheeler in \cite{WW} showed that generic Jordan type of $M$ is independent of the choice of generators of $E$. The stable generic Jordan type of $M$ is the generic Jordan type of $M$ modulo free direct summands. Let $M$ have generic Jordan type $[1]^{n_1}\cdots [p]^{n_p}$. Module $M$ is said to be generically free if $n_1=\cdots=n_{p-1}=0$. Finally, for an $\F E$-module $N$, say $N$ and $M$ have mutually complementary stable generic Jordan types if $N$ has stable generic Jordan type $[1]^{n_{p-1}}\cdots [p-1]^{n_1}$.

Notice that the modules isomorphic to $M$ as $\F E$-modules and $M$ have same generic Jordan type. Given a finite group $G$ containing $E$ and an $\F G$ module $N$, for any $g\in G$, let $E^g:=g^{-1}Eg$ and note that $N{\downarrow_{E^g}}$ and $N{\downarrow_E}$ have same generic Jordan type. For more properties of generic Jordan types of modules, one may refer to \cite{EFJPAS} and \cite{WW}. Some needed results of generic Jordan type of modules are collected as follows.
\begin{prop}\label{P;sjt}
Let $E$ be an elementary abelian $p$-subgroup of a finite group $G$. Let $M$ be an $\F E$-module and $N$ be an indecomposable $\F G$-module.
\begin{enumerate}
\item [\em (i)]Module $M$ is generically free only if $p\mid \dim_\F M$.
\item [\em (ii)] Modules $M$ and $M^*$ have same stable generic Jordan type.
\item [\em (iii)]The stable generic Jordan type of a direct sum of $\F E$-modules is the direct sum of the stable generic Jordan types of these modules.
\item [\em (iv)]Let $0\rightarrow M_1\rightarrow M_2\rightarrow M_3\rightarrow 0$ be a short exact sequence of $\F E$-modules.
    \begin{enumerate}
    \item [\em (a)]If $M_1$ is generically free, then $M_2$ and $M_3$ have same stable generic Jordan type.
    \item [\em (b)]If $M_2$ is generically free, then $M_1$ and $M_3$ have mutually complementary stable generic Jordan types.
    \item [\em (c)]If $M_3$ is generically free, then $M_1$ and $M_2$ have same stable generic Jordan type.
    \end{enumerate}
\item [\em (v)]Let $F$ be a proper subgroup of $E$ and $L$ be an $\F F$-module. Then $L{\uparrow^E}$ is generically free.
\item [\em (vi)]Module $M$ is not generically free if and only if $\mathrm{V}_E^{\#}(M)=\mathrm{V}_E^{\#}(\F)$.
\item [\em (vii)]Let $E$ have $p$-rank $n$. If $N{\downarrow_E}$ is not generically free, then $n\leq c_G(N)$. In the case, $c_G(N)=n$ if $n$ is also the $p$-rank of defect groups of the $p$-block containing $M$.
\end{enumerate}
\end{prop}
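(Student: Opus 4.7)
The plan is to handle the seven parts of the proposition in three natural groupings, since several are formal and the rest reduce to standard facts from \cite{WW} and \cite{EFJPAS}. Parts (i), (ii) and (iii) are immediate from the definition: (i) says that a direct sum of Jordan blocks all of size $p$ has dimension divisible by $p$; (ii) uses the self-duality of each Jordan block $[i]$ over a cyclic group of order $p$, so the generic Jordan type, and hence the stable one, is preserved under dualizing; and (iii) is the additivity of Jordan decomposition under direct sums together with the splitting off of the free parts.

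For (iv), I would base change to $\K$ and work in the category of $\K\langle u_\alpha\rangle$-modules, noting that this group algebra is a Nakayama self-injective local algebra of dimension $p$. A free module over it is both projective and injective, so whenever one of $M_1$, $M_2$, $M_3$ is generically free, the corresponding summand splits off from the generic restriction of the short exact sequence. This handles (a) and (c) at once: after splitting, the two surviving modules have the same stable type. For (b), after splitting off the generic free summand of $M_2$, the relation reduces to that of a submodule and its quotient inside a sum of Jordan blocks of size $p$; since a submodule of $[p]$ of dimension $j$ has quotient $[p-j]$, the stable types of $M_1$ and $M_3$ come out mutually complementary.

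For (v), I would apply the Mackey formula to $L\uparrow^E\downarrow_{\langle u_\alpha\rangle}$; since $F$ is a proper subgroup and $u_\alpha$ is generic, each intersection $\langle u_\alpha\rangle\cap F^g$ is trivial, so every Mackey summand is induced from the trivial subgroup of $\langle u_\alpha\rangle$ and is therefore free. Part (vi) follows from the fact that the rank variety is closed, so it equals $\mathrm{V}_E^{\#}(\F)=\F^n$ precisely when it contains a Zariski-dense subset, which holds if and only if the Jordan type of $M$ at the generic point has some block of size less than $p$, i.e.\ $M$ is not generically free. Part (vii) then follows by combining (vi) with the inequality $c_G(N)\geq \dim\mathrm{V}_E^{\#}(N{\downarrow_E})$, giving $c_G(N)\geq n$, and invoking the upper bound by the $p$-rank of defect groups recorded at the end of Section~2.2 for the reverse inequality. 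The main delicate point is the bookkeeping in (iv)(b): although the underlying picture is elementary, one must carry the complementary-blocks observation through a general short exact sequence rather than through a single block, and this is the part where I would cite the precise statements of \cite{WW} and \cite{EFJPAS} to avoid redoing the linear algebra from scratch.
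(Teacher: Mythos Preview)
The paper does not give a proof of this proposition at all: it is stated as a collection of known facts, with the reader directed to \cite{EFJPAS} and \cite{WW} in the sentence immediately preceding it. So there is no argument in the paper to compare against, and your sketches for (i)--(iv), (vi), (vii) are a reasonable unpacking of what those references contain. In particular, your treatment of (iv)(b) via the self-injectivity of $\K\langle u_\alpha\rangle$ is the right idea; phrasing it through the Heller operator (so that stably $M_3\cong\Omega^{-1}M_1$, and $\Omega^{-1}[j]\cong[p-j]$ over $\K C_p$) makes the complementary-blocks claim precise without any bookkeeping.

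There is, however, a genuine gap in your argument for (v). The Mackey formula applies to a pair of \emph{subgroups} of a common group, but $\langle u_\alpha\rangle$ is a shifted cyclic subgroup living in the unit group of $\K E$, not a subgroup of $E$; the intersections $\langle u_\alpha\rangle\cap F^g$ you invoke are not meaningful, and there is no double-coset decomposition to appeal to. A correct route is to choose a complement $F'$ so that $E=F\times F'$, write $L{\uparrow^E}\cong L\boxtimes \F F'$, and then use that the generic restriction of the regular module $\F F'$ to its own shifted subgroup is free (because projective modules have trivial rank variety), together with the fact that tensoring with a free $\K C_p$-module yields a free module. Alternatively, one can first prove (vi) and then deduce (v) from the standard fact that the support variety of $L{\uparrow^E}$ is contained in the image of $\mathrm{V}_F^{\#}(L)$, which is a proper subvariety since $F$ is proper. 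Either way, you should replace the Mackey step rather than try to patch it.
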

Let $G$ be a finite group and $M$ be an $\F G$-module. Module $M$ is said to be a $p$-permutation module if, for every $p$-subgroup $P$ of $G$, there exists a basis $\B$ of $M$ such that it can be permuted by $P$. Note that the class of $p$-permutation modules is closed by taking direct sum and direct summands. It is well-known that indecomposable $p$-permutation modules have trivial sources. Let $E$ be an elementary abelian $p$-subgroup of $G$, if M is also a $p$-permutation module, the stable generic Jordan type of $M{\downarrow_E}$ is described as follows.
\begin{lem}\cite[Lemma 3.1]{JLW}\label{T;permutaiton}
Let $E$ be an elementary abelian $p$-subgroup of a finite group $G$. Let $M$ be a $p$-permutation $\F G$-module and $\B$ be a basis of it permuted by $E$. The stable generic Jordan type of $M{\downarrow_E}$ is $[1]^n,$ where $n$ is the number of fixed points of $\B$ under the action of $E$.
\end{lem}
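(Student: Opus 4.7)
The plan is to reduce everything to the orbit decomposition of the basis $\B$ under the action of $E$ and then invoke Proposition \ref{P;sjt}. Since $E$ permutes $\B$, we can write $\B=\bigsqcup_{i\in I}O_i$ as a disjoint union of $E$-orbits, which yields a direct sum decomposition $M{\downarrow_E}\cong\bigoplus_{i\in I}\F O_i$ as $\F E$-modules, where $\F O_i$ denotes the permutation module on the transitive $E$-set $O_i$. For each $i\in I$, pick a representative $b_i\in O_i$ and let $E_i:=\mathrm{stab}_E(b_i)$; then $\F O_i\cong \F{\uparrow}_{E_i}^{E}$ as $\F E$-modules.

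The key step is to split the index set $I$ into two pieces: $I_{\text{fix}}:=\{i\in I:E_i=E\}$ and $I_{\text{mov}}:=\{i\in I:E_i\subsetneq E\}$. By definition $|I_{\text{fix}}|$ is exactly the number of fixed points $n$ of $\B$ under $E$. For $i\in I_{\text{fix}}$ we have $\F O_i\cong \F$ as $\F E$-modules, so each such summand contributes the trivial module $\F$, which has generic Jordan type $[1]$ (restriction to $\langle u_\alpha\rangle$ remains one-dimensional trivial, and this is not a free $\F\langle u_\alpha\rangle$-module since $p>1$), hence its stable generic Jordan type is $[1]$. For $i\in I_{\text{mov}}$, the module $\F O_i\cong \F{\uparrow}_{E_i}^{E}$ is induced from a proper subgroup, and by Proposition \ref{P;sjt}(v) it is generically free, so it contributes $0$ to the stable generic Jordan type.

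Finally, combining these through Proposition \ref{P;sjt}(iii), the stable generic Jordan type of a direct sum is the direct sum of stable generic Jordan types, so
\[
\text{stable generic Jordan type of }M{\downarrow_E}=\bigoplus_{i\in I_{\text{fix}}}[1]\;\oplus\;\bigoplus_{i\in I_{\text{mov}}}0\;=\;[1]^n,
\]
as desired. There is essentially no substantive obstacle: the only subtlety worth flagging is ensuring that $[1]$ truly survives in the stable generic Jordan type, which is immediate because the trivial one-dimensional $\F\langle u_\alpha\rangle$-module is not free. Everything else is a direct consequence of the orbit decomposition together with items (iii) and (v) of Proposition \ref{P;sjt}.
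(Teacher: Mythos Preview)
Your argument is correct: the orbit decomposition of $\B$ under $E$ gives $M{\downarrow_E}$ as a direct sum of transitive permutation modules $\F{\uparrow}_{E_i}^{E}$, the summands with $E_i=E$ contribute $[1]$ each, and those with $E_i\subsetneq E$ are generically free by Proposition~\ref{P;sjt}(v), so Proposition~\ref{P;sjt}(iii) yields stable generic Jordan type $[1]^n$. Note, however, that the paper does not supply its own proof of this lemma; it is quoted from \cite[Lemma 3.1]{JLW} without argument, so there is no in-paper proof to compare your approach against.
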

\subsection{Combinatorics}
Let $n$ be a positive integer and $[n]$ be the set $\{1,\cdots,n\}$. Let $\sym{n}$ be the symmetric group acting on $[n]$. Given positive integers $m_1,\cdots, m_\ell$, we view $\sym{m_1}\times\cdots\times\sym{m_\ell}$ as a subgroup of $\sym{n}$ naturally if $\sum_{i=1}^{\ell}m_i\leq n$. A partition of a non-negative integer $m$ is a non-increasing sequence of non-negative integers $(\lambda_1,\cdots,\lambda_{\ell})$ such that $\sum_{i=1}^\ell\lambda_i=m$. By abusing notations, the unique partition of $0$ is denoted by $\varnothing$. As usual, let $\unlhd$ be the dominance order on the set of all partitions of $n$. Given a partition $\lambda=(\lambda_1,\cdots,\lambda_{\ell})$, denote by $|\lambda|$ the sum $\sum_{i=1}^\ell \lambda_i$ and denote by $[\lambda]$ the set
$$ \{(i,j)\in \mathbb{N}^2:\ 1\leq i\leq \ell,\ 1\leq j\leq \lambda_i\}.$$
The set is called the Young diagram of $\lambda$. We will not distinguish between $\lambda$ and $[\lambda]$. The $p$-core of $\lambda$, denoted by $\kappa_p(\lambda)$, is the partition whose Young diagram is constructed by removing all rim $p$-hooks from $\lambda$ successively. It is well-known that the $p$-cores of partitions of $n$ label the $p$-blocks of $\F \sym{n}$. The number of rim $p$-hooks removed from $\lambda$ to obtain $\kappa_p(\lambda)$ is called the $p$-weight of $\lambda$. It is equal to $\frac{|\lambda|-|\kappa_p(\lambda)|}{p}$. We say $\lambda$ is $p$-regular if there does not exist a non-negative integer $m$ such that $m+p\leq \ell$ and $\lambda_{m+1}=\cdots=\lambda_{m+p}$.
The set of all $p$-regular partitions of $n$ is denoted by $\mathfrak{Reg}_p(n)$. The partition $\lambda$ is said to be $p$-restricted if the conjugate of it is $p$-regular. The $p$-adic expansion of $\lambda$ is the unique sum $\sum_{i=0}^mp^i\lambda(i)$ for some non-negative integer $m$, where $\lambda(i)$ is $p$-restricted for all $0\leq i\leq m$ and $\lambda=\sum_{i=0}^mp^i\lambda(i)$ via component-wise addition and scalar multiplication of partitions.
\subsection{Modules of symmetric groups}
We now briefly present some materials of representation theory of symmetric groups needed in the paper. One can refer to \cite{GJ1} or \cite{GJ3} for a background of the topic. For a given subgroup $H$ of $\sym{n}$, denote by $[n]/H$ the set of orbits of $[n]$ under the natural action of $H$. Let $a$ be an integer satisfying $0\leq ap\leq n$. If $0<a$, let $E_a$ be the elementary abelian $p$-subgroup $\langle \cup_{i=1}^a\{s_i\}\rangle$ of $\sym{n}$ where $s_i:=((i-1)p+1,(i-1)p+2,\cdots, ip).$ Set $E_0$ to be the trivial group and note that $E_a$ has $p$-rank $a$. Let $b$ and $c$ be two positive integers such that $b+c\leq n$ and let $B$ and $C$ be subgroups of $\sym{b}$ and $\sym{c}$ respectively. We write $B\times C\hookrightarrow \sym{b}\times \sym{c}\hookrightarrow \sym{n}$ for the obvious inclusions of groups. Let $\lambda:=(\lambda_1,\cdots,\lambda_\ell)$ be a partition of $n$. The Young subgroup associated with $\lambda$, denoted by $\mathfrak{S}_\lambda$, is defined to be
$$ \sym{\lambda_1}\times\cdots\times\sym{\lambda_\ell},$$
where the first factor $\sym{\lambda_1}$ acts on the set $\{1,\cdots, \lambda_1\}$, the second factor acts on the set $\{\lambda_1+1,\cdots, \lambda_1+\lambda_2\}$ and so on. To describe modules of symmetric groups, we assume that the reader is familiar with the definitions of tableau, tabloid and polytabloid (see \cite{GJ1} or \cite{GJ3}). Let $\F(\lambda)$ be the trivial $\F\sym{\lambda}$ module. The Young permutation module with respect to $\lambda$, denoted by $M^{\lambda}$, is defined to be the induced module $\F(\lambda){\uparrow^{\sym{n}}}$. It is an $\F$-space generated by all $\lambda$-tabloids where $\sym{n}$ permutes these tabloids. Notice that $M^\lambda$ is a $p$-permutation module since all $\lambda$-tabloids form a basis that can be permuted by any $p$-subgroup of $\sym{n}$. Let $E$ be an elementary abelian $p$-subgroup of $\sym{n}$. We get stable generic Jordan type of $M^\lambda{\downarrow_E}$ by Lemma \ref{T;permutaiton}.
\begin{lem}\label{PModules}\cite[Lemma 2.7]{JLW}
Let $\lambda$ be a partition of $n$ and $E$ be an elementary abelian $p$-subgroup of $\sym{n}$. Then $M^\lambda{\downarrow_E}$ has stable generic Jordan type $[1]^m,$ where $m$ is the number of unordered ways to insert the orbits in $[n]/E$ into the rows of $\lambda$.
\end{lem}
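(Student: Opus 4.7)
The plan is to apply Lemma \ref{T;permutaiton} directly to the standard tabloid basis of $M^\lambda$. As noted in the paragraph preceding the statement, $M^\lambda$ is a $p$-permutation module, and the set $\B$ of $\lambda$-tabloids is a basis of $M^\lambda$ that is permuted by every $p$-subgroup of $\sym{n}$, in particular by $E$. So Lemma \ref{T;permutaiton} immediately gives that $M^\lambda{\downarrow_E}$ has stable generic Jordan type $[1]^m$ where $m=|\B^E|$ is the number of $\lambda$-tabloids fixed by $E$. The remaining task is to identify this count with the number of unordered ways to insert the orbits in $[n]/E$ into the rows of $\lambda$.

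To do so, I would describe a bijection. A $\lambda$-tabloid $\{T\}$ is an ordered tuple of subsets $(R_1,\ldots,R_\ell)$ of $[n]$ (the rows) with $|R_i|=\lambda_i$ and $\bigsqcup_i R_i=[n]$, regarded up to the order within each row. The group $E$ acts on $\{T\}$ by $\sigma\cdot(R_1,\ldots,R_\ell)=(\sigma R_1,\ldots,\sigma R_\ell)$, so $\{T\}$ is $E$-fixed if and only if each $R_i$ is $E$-stable as a subset of $[n]$. Since every $E$-stable subset of $[n]$ is a union of $E$-orbits (the orbits partition $[n]$), the $E$-fixed tabloids correspond bijectively to ways of distributing the orbits in $[n]/E$ among the rows $R_1,\ldots,R_\ell$ so that the sum of orbit sizes placed into row $i$ equals $\lambda_i$. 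Because the order of elements inside a tabloid row is irrelevant, the orbits placed into a given row are collected as an unordered multiset; because the rows themselves are ordered in a tabloid, the assignment of orbits to specific rows is ordered. This is precisely what ``the number of unordered ways to insert the orbits in $[n]/E$ into the rows of $\lambda$'' counts, yielding $|\B^E|=m$.

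There is essentially no obstacle here once Lemma \ref{T;permutaiton} is in hand; the argument is a routine translation between the group-theoretic description of $E$-fixed tabloids and a combinatorial count. The only point one should be careful about is matching conventions: a $\lambda$-tabloid is an \emph{ordered} sequence of rows, whereas within each row order is forgotten, which is exactly why the combinatorial statement speaks of inserting orbits into specific (ordered) rows but does not further order the orbits assigned to a single row.
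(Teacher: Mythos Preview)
Your argument is correct. The paper does not give its own proof of this lemma but simply cites \cite[Lemma 2.7]{JLW}; your approach---applying Lemma \ref{T;permutaiton} to the tabloid basis and then identifying the $E$-fixed tabloids with distributions of $E$-orbits among the rows---is the natural one and is almost certainly the argument in the cited source.
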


The Specht module $S^\lambda$ is an $\F\sym{n}$-submodule of $M^{\lambda}$ generated by all $\lambda$-polytabloids. It has a basis labelled by all standard $\lambda$-tableaux by Standard Basis Theorem. The dual of it is denoted by $S_\lambda$. Unlike the characteristic zero case, some Specht modules are no longer simple now. However, James in \cite[Theorem 11.5]{GJ1} showed that the module $D^{\lambda}:=S^{\lambda}/\text{Rad}(S^{\lambda})$ is either zero or simple. It is simple if and only if $\lambda$ is $p$-regular and the set $ \{D^{\mu}: \mu\in \mathfrak{Reg}_p(n)\}$
forms a complete set of representatives of isomorphic classes of simple $\F\sym{n}$-modules. For any $\mu\in \mathfrak{Reg}_p(n)$, it is well-known that the $p$-rank of defect groups of the $p$-block containing $D^\mu$ is exactly the $p$-weight of $\mu$. Moreover, $D^\mu$ is self dual and $(S^\lambda:D^\mu)\neq 0$ only if $\lambda\trianglelefteq \mu$.

The representatives of all isomorphism classes of indecomposable summands of Young permutation modules are called Young modules and are parametrized by James in \cite[Theorem 3.1]{GJ2} using partitions. Namely, let $\lambda$ and $\mu$ be two partitions of $n$. For a given decomposition of $M^\lambda$ into a direct sum of indecomposable summands, there exists a unique indecomposable summand of $M^\lambda$ that contains $S^\lambda$. The module, unique up to isomorphism, is denoted by $Y^\lambda$. We have $[M^\lambda:Y^\lambda]=1$ and $[M^{\lambda}:Y^{\mu}]\neq 0$ only if $\lambda\unlhd \mu$. One thus gets
\begin{equation}
M^\lambda\cong Y^\lambda\oplus\bigoplus_{\lambda\lhd \mu}k_{\lambda,\mu}Y^\mu,
\end{equation}
where $k_{\lambda,\mu}:=[M^\lambda:Y^\mu]$ and all $k_{\lambda,\mu}$'s are known as $p$-Kostka numbers. Note that Young modules are self-dual and are $p$-permutation modules with trivial sources. It is well-known that $Y^\lambda$ has a filtration for some non-negative integer $m$
\begin{equation}
0=M_0\subset M_1\subset\cdots\subset M_{m+1}=Y^\lambda,
\end{equation}
where $M_{i+1}/M_{i}$ is isomorphic to some Specht module $S^{\lambda_i}$ and $\lambda\unlhd\lambda_i$ if $0\leq i\leq m$. Moreover, $M_1\cong S^\lambda$ and the quotient factor $S^\lambda$ occurs exactly once in the filtration of $Y^\lambda$.
The vertices of Young modules are determined by Grabmeier in \cite{GJ} and by Erdmann in \cite{Erdmann} using only representation theory of symmetric groups. To describe them, let $\lambda$ have $p$-adic expansion $\sum_{i=0}^mp^i\lambda(i)$ for some non-negative integer $m$, where $|\lambda(i)|=n_i$ for all $0\leq i\leq m$. Denote by $\O_\lambda$ the partition
$$(\underbrace{p^m,\cdots,p^m}_{n_m\ \text{times}},\ \underbrace{p^{m-1},\cdots,p^{m-1}}_{n_{m-1}\ \text{times}},
\cdots,\ \underbrace{1,\cdots,1}_{n_0\ \text{times}}).$$

\begin{thm}\label{T; Youngvertices}(\cite{Erdmann},\ \cite{GJ})
Let $\lambda$ be a partition of $n$. Then the vertices of $Y^{\lambda}$ are $\sym{n}$-conjugate to the Sylow $p$-subgroups of $\mathfrak{S}_{\O_\lambda}$.
\end{thm}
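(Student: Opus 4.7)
The plan is to prove the theorem by a two-sided comparison: I would show that the vertex of $Y^\lambda$ is contained (up to $\sym{n}$-conjugation) in a Sylow $p$-subgroup $P$ of $\mathfrak{S}_{\O_\lambda}$, and that conversely some $\sym{n}$-conjugate of $P$ is contained in the vertex. Since all vertices of an indecomposable $\F\sym{n}$-module are $\sym{n}$-conjugate, these two inclusions force the vertex to equal a conjugate of $P$.

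For the upper containment I would show that $Y^\lambda$ is a direct summand of $M^{\O_\lambda}$, equivalently that the $p$-Kostka number $k_{\O_\lambda,\lambda}$ is non-zero. A direct inspection of the definition of the $p$-adic expansion shows $\O_\lambda\unlhd\lambda$, which is compatible with the necessary condition $k_{\O_\lambda,\mu}\neq 0\Rightarrow\O_\lambda\unlhd\mu$ recorded in the excerpt. To actually obtain $k_{\O_\lambda,\lambda}\geq 1$, I would construct an explicit matching by distributing the rows of $\O_\lambda$ among the $p$-adic constituents $\lambda(0),\lambda(1),\ldots$ of $\lambda$ and then invoke Krull--Schmidt on $M^{\O_\lambda}$ together with the fact that $Y^\lambda$ occurs with multiplicity one in $M^\lambda$. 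Once $Y^\lambda\mid M^{\O_\lambda}$ is established, any indecomposable summand of $M^{\O_\lambda}=\F{\uparrow}^{\sym{n}}_{\mathfrak{S}_{\O_\lambda}}$ has vertex contained in a Sylow $p$-subgroup of $\mathfrak{S}_{\O_\lambda}$, because inducing from such a Sylow yields $M^{\O_\lambda}$ as a summand (the index being coprime to $p$).

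For the lower containment I would use that $Y^\lambda$ is a $p$-permutation module with trivial source, so by the theory of the Brauer construction its vertex is, up to conjugacy, a maximal $p$-subgroup $Q$ with $Y^\lambda(Q)\neq 0$. Setting $Q=P$, I would first compute $M^\lambda(P)$ as the $\F$-span of the $\lambda$-tabloids fixed by $P$; since $P$ acts on $[n]$ through its natural embedding as a Sylow subgroup of $\mathfrak{S}_{\O_\lambda}$ it permutes the blocks of sizes $p^m,\ldots,1$ prescribed by $\O_\lambda$, and a direct count shows this set of fixed tabloids is non-empty. Tracking the unique copy of $Y^\lambda$ in $M^\lambda$ through the Brauer functor and comparing with the contributions of the remaining summands $Y^\mu$ for $\lambda\lhd\mu$, I would conclude $Y^\lambda(P)\neq 0$, so that $P$ is contained in some vertex.

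The main obstacle will be the combinatorial bookkeeping in both halves: exhibiting $k_{\O_\lambda,\lambda}\geq 1$ requires a concrete tableau-insertion respecting the $p$-adic decomposition, together with a Krull--Schmidt argument ruling out absorption of this embedding into a $Y^\mu$-summand with $\O_\lambda\lhd\mu$, while the Brauer-quotient computation must be delicate enough to isolate the $Y^\lambda$-contribution to $M^\lambda(P)$ from those of the other Young summands $Y^\mu$ whose vertices may also meet $P$.
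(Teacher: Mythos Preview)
The paper does not prove this theorem: it is quoted from the literature with attributions to Grabmeier \cite{GJ} and Erdmann \cite{Erdmann}, and no argument is supplied in the text. There is therefore no ``paper's own proof'' to compare your proposal against.

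That said, your two-sided containment strategy is broadly the right shape and is close in spirit to Erdmann's argument. A word of caution on each half. For the upper bound, the step ``construct an explicit matching \ldots and then invoke Krull--Schmidt'' is where the real content lies: knowing $\O_\lambda\unlhd\lambda$ is necessary but far from sufficient for $k_{\O_\lambda,\lambda}\neq 0$, and an ad hoc tabloid count will not by itself isolate the $Y^\lambda$-summand. The cleanest route in the literature goes through the Schur algebra and a Steinberg-type tensor factorisation of $Y^\lambda$ along the $p$-adic expansion of $\lambda$, which simultaneously exhibits $Y^\lambda$ as a summand of a module induced from $\mathfrak{S}_{\O_\lambda}$. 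For the lower bound, your Brauer-quotient idea is correct in principle, but ``tracking the unique copy of $Y^\lambda$ in $M^\lambda$ through the Brauer functor'' is again the hard part: one must show that the fixed tabloids you produce do not all lie in the $Y^\mu$-summands with $\lambda\lhd\mu$, and this typically requires either an induction on the dominance order or the Brou\'e correspondence for $p$-permutation modules together with knowledge of the Green correspondent. As written, both halves of your outline identify the right objects but defer precisely the steps that carry the weight of the proof.
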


Some known results used in the paper are collected as follows.
\begin{lem}\label{L;YongModules}
Let $\lambda\in \mathfrak{Reg}_p(n)$ having $p$-weight $w$. Let $E$ be an elementary abelian $p$-subgroup of $\sym{n}$ and $P_\lambda$ be a Sylow $p$-subgroup of $\mathfrak{S}_{\O_\lambda}$. Then
\begin{enumerate}
\item [\em (i)]\cite[Proposition 1.1]{DH} $S^\lambda\cong D^\lambda\cong Y^\lambda$ if $S^\lambda\cong D^\lambda$.
\item [\em (ii)]\cite[Theorem 3.5]{JLW} $Y^\lambda{\downarrow_E}$ has stable generic Jordan type $[1]^a$ where $0\leq a$. It is not generically free if and only if $E$ is $\sym{n}$-conjugate to a $p$-subgroup of $P_\lambda$.
\item [\em (iii)]\cite[Corollary 6.5]{DanzLim} $c_{\sym{n}}(D^{\lambda})=w$ if $2<p$ and $S^\lambda\cong D^\lambda$.
\end{enumerate}
\end{lem}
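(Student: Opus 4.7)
The three assertions can be verified by combining the material already developed with standard facts about Specht filtrations, self-duality, and vertices of $p$-permutation modules.

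For (i), the plan is to use filtration (2) of $Y^\lambda$, whose bottom factor is $S^\lambda$ appearing exactly once and whose other factors $S^{\lambda_i}$ satisfy $\lambda \lhd \lambda_i$. If $S^\lambda \cong D^\lambda$, then $S^\lambda$ is simple and self-dual, and embeds as the socle of $Y^\lambda$. Self-duality of $Y^\lambda$ then forces a copy of $D^\lambda$ in the head; if any additional Specht layer $S^{\lambda_i}$ were present, its contribution would produce a composition factor $D^\mu$ with $\mu \unrhd \lambda_i \rhd \lambda$ in the head, which by self-duality must also appear in the socle. But simplicity of the socle, which is $\cong D^\lambda$, together with the dominance constraint $(S^\nu : D^\mu) \ne 0 \Rightarrow \nu \unlhd \mu$, rules this out. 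Hence the filtration collapses to $S^\lambda = Y^\lambda = D^\lambda$.

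For (ii), since $M^\lambda$ is a $p$-permutation module and the class of $p$-permutation modules is closed under direct summands, $Y^\lambda$ is itself a $p$-permutation module; Lemma \ref{T;permutaiton} then yields a stable generic Jordan type of the form $[1]^a$. For the ``not generically free'' criterion, Theorem \ref{T; Youngvertices} together with the trivial-source property shows that $Y^\lambda$ is a direct summand of the trivial module induced from $P_\lambda$. A Mackey decomposition thus writes $Y^\lambda{\downarrow_E}$ as a sum of pieces induced from the subgroups $E \cap P_\lambda^g$, and Proposition \ref{P;sjt}(v) forces every summand with $E \cap P_\lambda^g \subsetneq E$ to be generically free. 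So the restriction retains a non-free stable part precisely when $E \subseteq P_\lambda^g$ for some $g \in \sym{n}$, i.e.\ when $E$ is $\sym{n}$-conjugate to a $p$-subgroup of $P_\lambda$.

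For (iii), combine (i) and (ii): $D^\lambda \cong Y^\lambda$, and its restriction to a maximal elementary abelian $p$-subgroup $E$ of $P_\lambda$ is not generically free, so Proposition \ref{P;sjt}(vii) gives $c_{\sym{n}}(D^\lambda) \geq \dim E$. The block-theoretic upper bound yields $c_{\sym{n}}(D^\lambda) \leq w$, producing equality provided the $p$-rank of $P_\lambda$ equals $w$. The main obstacle is this last combinatorial check: analysing the Sylow $p$-subgroup of $\mathfrak{S}_{\O_\lambda}$ as an iterated wreath product determined by the $p$-adic expansion of $\lambda$, and confirming that its $p$-rank matches the $p$-weight $w$ of $\lambda$. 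The hypothesis $p > 2$ is needed at this step, since wreath products built from $C_2$ behave differently and can reduce the $2$-rank below $w$.
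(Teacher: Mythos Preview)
The paper does not prove this lemma at all; it simply collects three results from the literature with citations and no argument. Your attempt therefore goes well beyond what the paper does, and it is worth assessing each part.

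Your sketches for (i) and (ii) are essentially sound. In (i), the decisive point is that $(Y^\lambda:D^\lambda)=1$, which together with indecomposability and self-duality of $Y^\lambda$ forces $Y^\lambda=D^\lambda$; your phrasing about composition factors in the head and socle could be tightened, but the idea is correct. In (ii), your Mackey argument handles one direction cleanly: if $E$ is not conjugate into $P_\lambda$, every Mackey piece is induced from a proper subgroup of $E$ and is generically free by Proposition~\ref{P;sjt}(v), hence so is the summand $Y^\lambda{\downarrow_E}$. For the converse you need one more line: since $P_\lambda$ is a vertex of $Y^\lambda$ with trivial source, $\F$ is a direct summand of $Y^\lambda{\downarrow_{P_\lambda}}$, hence of $Y^\lambda{\downarrow_E}$ whenever $E\le P_\lambda$, and this trivial summand contributes $[1]^1$ to the stable generic Jordan type. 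Your Mackey argument alone only shows that the \emph{full} induced module is not generically free, which does not immediately transfer to the summand $Y^\lambda$.

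For (iii) there is a genuine gap. Your reduction is correct: by (i) and (ii) the problem becomes showing that the $p$-rank of $P_\lambda$ equals $w$. But unwinding the definitions, the $p$-rank of a Sylow $p$-subgroup of $\mathfrak{S}_{\O_\lambda}$ is $\sum_{i\ge 1} n_i\,p^{i-1}=(n-|\lambda(0)|)/p$, whereas $w=(n-|\kappa_p(\lambda)|)/p$. So what you actually need is $|\lambda(0)|=|\kappa_p(\lambda)|$. This fails for general $p$-regular $\lambda$ (for instance $\lambda=(2,1)$ at $p=3$ has $\lambda(0)=(2,1)$ but $\kappa_3(\lambda)=\varnothing$), and while it does hold under the additional hypothesis that $S^\lambda$ is simple and $p>2$, proving this requires substantive input about the structure of simple Specht modules. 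It is not a routine ``combinatorial check'' but essentially the content of the Danz--Lim result being cited; your remark about wreath products of $C_2$ misidentifies where the odd-characteristic hypothesis enters.
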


We now specialize our discussion to the modules labelled by two-part partitions. A two-part partition of $n$ is a partition of $n$ with at most two parts. We begin with some notations. For two non-negative integers $a$ and $b$, let $a=\sum_{0\leq i}p^ia_i$ and $b=\sum_{0\leq i}p^ib_i$ where $0\leq a_i,b_i<p$. The sums are uniquely written and known as the $p$-adic sums of $a$ and $b$. Write $a\subseteq_p b$ if $0\leq a_i\leq b_i$ for all $0\leq i$ and write $a \sqsubseteq_p b$ if $a_i=b_i$ or $a_i=0$ for all $0\leq i$.

The following results are necessary in the discussion.

\begin{thm}\label{T;Henke}\cite[Theorem 3.3]{Henke}
Let $s$ and $k$ be two integers such that $0\leq 2s\leq 2k\leq n$. Then $[M^{(n-k,k)}:Y^{(n-s,s)}]=\varphi(n,k,s)$,
where
\[
\varphi(n,k,s):=\begin{cases} 1, & \text{if}\ k-s\subseteq_p n-2s,\\
0, & \text{otherwise}.\end{cases}
\]
\end{thm}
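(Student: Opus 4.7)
The plan is to compute the $p$-Kostka number $c_s:=[M^{(n-k,k)}:Y^{(n-s,s)}]$ using the Brauer correspondence for $p$-permutation modules, combined with fixed-point counts under Sylow subgroups of the vertex groups, and then to translate the resulting equality into the claimed $p$-adic condition via Lucas' theorem.

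A dominance argument first shows that every partition $\mu$ of $n$ with $\mu\unrhd(n-k,k)$ is automatically two-part of the form $(n-s,s)$ with $0\leq s\leq k$: the inequalities $\mu_1\geq n-k$ and $\mu_1+\mu_2\geq n$ combined with $|\mu|=n$ force $\mu_2\geq n-\mu_1=\mu_2+\mu_3+\cdots$, so $\mu_3=\mu_4=\cdots=0$. Consequently $M^{(n-k,k)}=\bigoplus_{s=0}^{k}c_s Y^{(n-s,s)}$. For each $s$, let $P_s$ denote a Sylow $p$-subgroup of $\sym{\O_{(n-s,s)}}$, which is a vertex of $Y^{(n-s,s)}$ by Theorem \ref{T; Youngvertices}.

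Since $M^{(n-k,k)}$ is $p$-permutation with basis the $(n-k,k)$-tabloids, its Brauer quotient $M^{(n-k,k)}(P_s)$ has $\F$-dimension equal to the number $f(s)$ of $P_s$-fixed $(n-k,k)$-tabloids, a manifestly combinatorial quantity obtained by distributing the $P_s$-orbits of $[n]$ between the two rows of $(n-k,k)$. On the other hand, functoriality of the Brauer quotient gives
$$f(s)=\sum_{t}c_t\cdot g(s,t),\qquad g(s,t):=\dim Y^{(n-t,t)}(P_s),$$
where the nonzero contributions come from the $t$ such that $P_s$ is $\sym{n}$-conjugate to a subgroup of $P_t$. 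Varying $s$ over $\{0,1,\ldots,k\}$ and applying Mobius inversion over the conjugacy-containment partial order on the $P_t$'s, one recovers each $c_s$ as a signed combination of the $f$'s and $g$'s; starting from the forced value $c_k=1$, downward induction yields each multiplicity in closed form.

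The final step is the translation into the $p$-adic condition. Because $(n-k,k)$ has only two rows, $f(s)$ and the correction terms $g(s,t)$ factor as short sums of products of two binomial coefficients governing the distribution of $P_s$-orbits of prescribed $p$-power sizes (read off from $\O_{(n-s,s)}$) between the rows of lengths $n-k$ and $k$. Lucas' theorem then converts mod-$p$ non-vanishing of these binomials into digit-wise containment conditions, and inversion of the combinatorial system produces precisely the criterion $k-s\subseteq_p n-2s$. The principal obstacle is this last combinatorial step: verifying that after the inversion each multiplicity $c_s$ lands in $\{0,1\}$ and matches $\varphi(n,k,s)$ exactly. The two-part hypothesis is what makes the obstacle tractable, since it keeps the binomial factorizations of $f(s)$ and $g(s,t)$ simple enough for Lucas' theorem to yield the clean $\{0,1\}$-valued answer.
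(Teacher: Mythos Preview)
The paper does not prove this statement; it is quoted as \cite[Theorem 3.3]{Henke} and used as a black box throughout. There is therefore no ``paper's own proof'' to compare against. Your Brauer-quotient strategy is a recognisable and in principle viable route to $p$-Kostka numbers, different in flavour from the Schur-algebra and tilting-module methods that are standard for such results.

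As written, however, the proposal has two concrete gaps. First, the quantities $g(s,t)=\dim Y^{(n-t,t)}(P_s)$ are not known a priori: computing the Brauer quotient of a Young module presupposes knowing its structure, which is precisely what you are after, so there is a circularity to break. The usual repair is to induct on $k$ rather than on $s$: once $M^{(n-k',k')}$ has been decomposed for every $k'<k$, each $Y^{(n-s,s)}$ with $s<k$ is a virtual sum of permutation modules and its fixed points under any $p$-subgroup become directly computable; the one new summand $Y^{(n-k,k)}$ then appears with multiplicity one. Second, your ``M\"obius inversion over the conjugacy-containment partial order on the $P_t$'s'' is ill-posed as stated, because the map $s\mapsto P_s$ is not injective: distinct two-part partitions of $n$ can have $\sym{n}$-conjugate vertices (for instance with $p=3$, $n=9$ the partitions $(8,1)$ and $(5,4)$ both give $\O=(3,3,1,1,1)$), so fixing $k$ and varying only $s$ yields an underdetermined linear system. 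Your closing paragraph rightly flags the combinatorial verification as the ``principal obstacle'', but then stops; that step carries essentially all of the content and is not a formality.
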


\begin{thm}\label{T;James}\cite[Theorem 24.15]{GJ1}
Let $s$ and $k$ be two integers such that $0\leq 2s\leq 2k\leq n$ and $(n-s,s)\in \mathfrak{Reg}_p(n)$. Then $(S^{(n-k,k)}:D^{(n-s,s)})=\psi(n,k,s)$,
where
\[
\psi(n,k,s):=\begin{cases} 1, & \text{if}\ k-s\sqsubseteq_p n-2s+1,\\
0, & \text{otherwise}.\end{cases}
\]
\end{thm}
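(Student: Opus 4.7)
The plan is to follow James's approach to two-part Specht modules, as in \cite[Chapter 24]{GJ1}. The proof proceeds in three stages. First, identify $M^{(n-k,k)}$ with the permutation module on $k$-subsets of $[n]$ and recall James's homomorphism $\psi_{k,r}: M^{(n-k,k)} \to M^{(n-r,r)}$ for $0\le r \le k$, sending a $k$-subset $T$ to $\sum_{R \subseteq T,\, |R| = r} R$. By James's kernel intersection theorem, $S^{(n-k,k)} = \bigcap_{r < k} \ker \psi_{k,r}$, which provides the essential handle on the Specht module.

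Second, I would analyze $\mathrm{Hom}_{\F \sym{n}}(S^{(n-k,k)}, S^{(n-s,s)})$ for $s \leq k$. Restricting $\psi_{k,s}$ to $S^{(n-k,k)}$ and showing its image lies in $S^{(n-s,s)}$ yields a candidate homomorphism, whose non-triviality is controlled, after evaluation on an explicit polytabloid, by whether a specific binomial coefficient is non-zero modulo $p$. A careful bookkeeping converts this, via Lucas's theorem, to the combinatorial predicate $k - s \sqsubseteq_p n - 2s + 1$; the strengthening from $\subseteq_p$ (as in Theorem \ref{T;Henke}) to $\sqsubseteq_p$ reflects the additional cancellation required to land in the Specht submodule rather than merely in $M^{(n-s,s)}$.

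Third, I would convert the existence of a non-zero homomorphism into a statement about composition multiplicities. Since $(n-s,s)$ is $p$-regular, $D^{(n-s,s)}$ is the head of $S^{(n-s,s)}$; using the self-duality of $D^{(n-s,s)}$, a non-zero map $S^{(n-k,k)} \to S^{(n-s,s)}$ produces $D^{(n-s,s)}$ as a composition factor of $S^{(n-k,k)}$. The upper bound $(S^{(n-k,k)}:D^{(n-s,s)}) \le 1$ comes from combining Theorem \ref{T;Henke} with the Specht filtration of Young modules to set up a unitriangular $\{0,1\}$-matrix equation relating $p$-Kostka numbers and decomposition numbers.

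The main obstacle is the second step: pinning down the precise binomial coefficient and explaining why the more restrictive relation $\sqsubseteq_p$ emerges rather than the naive $\subseteq_p$ one would expect from a single application of Lucas's theorem. This is where one must track how digit contributions in the $p$-adic expansions interact in James's explicit formula, and is the delicate combinatorial heart of the argument.
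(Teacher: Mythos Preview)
The paper does not prove this statement at all: Theorem~\ref{T;James} is quoted verbatim from James's book \cite[Theorem 24.15]{GJ1} and is used as a black box throughout the paper. There is therefore no ``paper's own proof'' to compare your proposal against.

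That said, your outline is a reasonable sketch of James's original argument in \cite[Chapter 24]{GJ1}: the identification of $M^{(n-k,k)}$ with the module on $k$-subsets, the maps $\psi_{k,r}$, and the kernel intersection theorem are exactly the ingredients James uses. Your third step, deducing the upper bound $(S^{(n-k,k)}:D^{(n-s,s)})\le 1$ from a unitriangular relation between $p$-Kostka numbers and decomposition numbers, is not how James proceeds; he works more directly inside the two-part setting. Your second step is also where the sketch is thinnest: you correctly flag that the passage from $\subseteq_p$ to $\sqsubseteq_p$ is the crux, but you do not indicate which binomial expression actually arises or how the carry-free condition $\sqsubseteq_p$ falls out of it. If you intend to reconstruct the proof rather than cite it, that is the step requiring genuine work; the rest is standard.
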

According to \cite[Theorem 17.13]{GJ1}, for a partition $(n-m,m)$, we remark that $M^{(n-m,m)}$ has a filtration
\begin{equation}
0=M_0\subset M_1\subset\cdots\subset M_{m+1}=M^{(n-m,m)},
\end{equation}
where $M_{i+1}/M_{i}\cong S^{(n-m+i,m-i)}$ if $0\leq i\leq m$. In particular, one can deduce that
\begin{equation}
\dim_\F S^{(n-m,m)}={n\choose m}-{n\choose m-1}.
\end{equation}

The following lemma will be used in the proof of Theorem \ref{T;C}.

\begin{lem}\label{R;Spechtfiltration}
Let $\lambda$ and $\mu$ be two $p$-regular two-part partitions of $n$. They satisfy the following conditions:
\begin{enumerate}
\item [\em (a)]$\lambda\lhd\mu\lhd(n)$, $\kappa_p(\lambda)=\kappa_p(\mu)=\kappa$;
\item [\em (b)]there does not exist a partition $\nu$ such that $\lambda\lhd\nu\lhd\mu$ and $\kappa_p(\nu)=\kappa$;
\item [\em (c)]there does not exist a partition $\eta$ such that $\mu\lhd\eta\lhd(n)$ and $\kappa_p(\eta)=\kappa$.
\end{enumerate}
\begin{enumerate}
\item [\em (i)] Let $\kappa_p((n))=\kappa$, $Y^{(n)}\mid Y^\lambda$ and $Y^\mu\nmid M^\lambda$. Then there exists a short exact sequence $0\rightarrow S^{\lambda}\rightarrow Y^\lambda\rightarrow S^{\mu}\rightarrow 0$.
\item [\em (ii)] Let $\kappa_p((n))\neq\kappa$ and $Y^\mu\nmid M^\lambda$. Then there exists a short exact sequence $0\rightarrow S^{\lambda}\rightarrow Y^\lambda\rightarrow S^{\mu}\rightarrow 0$.
\end{enumerate}
\end{lem}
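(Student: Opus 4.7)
The plan is to start from the canonical Specht filtration of $Y^\lambda$ given by (2.2) and argue that, under the stated hypotheses, it collapses to a length-two filtration with $S^\lambda$ at the bottom and $S^\mu$ at the top; the desired short exact sequence then drops out immediately.

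By (2.2) I may write a filtration $0=M_0\subset M_1\subset\cdots\subset M_{k+1}=Y^\lambda$ with $M_1\cong S^\lambda$ and $M_{i+1}/M_i\cong S^{\lambda_i}$ where $\lambda\unlhd \lambda_i$. Since $Y^\lambda\subset M^\lambda$, the filtration (2.3) of $M^\lambda$ forces each $\lambda_i$ to be of two-part form $(n-c,c)$ with $0\le c\le \lambda_2$, and indecomposability of $Y^\lambda$ forces $\kappa_p(\lambda_i)=\kappa$. Two-part partitions of $n$ are totally ordered by dominance, so condition (b) forces every $\lambda_i\ne \lambda$ to satisfy $\lambda_i\unrhd \mu$, and condition (c) then narrows each such $\lambda_i$ to $\{\mu,(n)\}$. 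In case (ii) the assumption $\kappa_p((n))\ne \kappa$ rules $(n)$ out at once. In case (i) the extra hypothesis on $Y^{(n)}$, together with the Krull--Schmidt decomposition (2.1) of $M^\lambda$, is designed to absorb the unique copy of $S^{(n)}$ appearing in (2.3) into a distinct summand of $M^\lambda$, so that $(n)$ cannot appear as any $\lambda_i$.

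To pin the multiplicity of $S^\mu$ at one I would use (2.3), which supplies exactly one copy of $S^\mu$ in any Specht filtration of $M^\lambda$. Concatenating Specht filtrations of the summands listed in (2.1), this single copy lives in exactly one summand $Y^{\mu'}$. Such a $\mu'$ must satisfy $\lambda\unlhd \mu'\unlhd \mu$ by comparing the dominance bounds coming from (2.1) and (2.2), and must have $\kappa_p(\mu')=\kappa$ by a single-block argument. The hypothesis $Y^\mu\nmid M^\lambda$ rules out $\mu'=\mu$, and condition (b) then forces $\mu'=\lambda$, so the unique $S^\mu$ sits inside $Y^\lambda$. Consequently the filtration of $Y^\lambda$ has length exactly two with factors $S^\lambda$ and $S^\mu$, yielding the required sequence $0\to S^\lambda\to Y^\lambda\to S^\mu\to 0$.

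I expect the main obstacle to be the multiplicity bookkeeping in the paragraph above: a priori $S^\mu$ could be distributed across several summands of $M^\lambda$, and it is precisely the conditions $Y^\mu\nmid M^\lambda$ and (in case (i)) the $Y^{(n)}$ hypothesis, used in tandem with the $p$-Kostka numbers of Theorem \ref{T;Henke}, that push all the stray two-part Specht factors of $M^\lambda$ into summands other than $Y^\lambda$. Keeping the two cases cleanly separated, and verifying that each summand $Y^{\mu'}$ of $M^\lambda$ which could host $S^\mu$ or $S^{(n)}$ is actually ruled out, is where the real work lies.
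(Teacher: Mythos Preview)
Your argument is correct and follows essentially the same route as the paper: restrict attention to the $\kappa$-block component of $M^\lambda$, use conditions (a)--(c) to see that the only possible Specht factors of $Y^\lambda$ are $S^\lambda$, $S^\mu$, and (in case (i)) $S^{(n)}$, then use the hypotheses $Y^\mu\nmid M^\lambda$ and $Y^{(n)}\mid M^\lambda$ (the statement's ``$Y^{(n)}\mid Y^\lambda$'' is a typo for this) together with the filtration (2.3) to pin down the filtration of $Y^\lambda$ as having exactly the two factors $S^\lambda$ and $S^\mu$.

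The one substantive difference is in how you locate $S^\mu$ inside $Y^\lambda$. You count Specht multiplicities across the summands of $M^\lambda$, which implicitly uses that the classes $[S^\nu]$ are linearly independent in the Grothendieck group (equivalently, that the decomposition matrix is unitriangular); this is standard but worth making explicit, since it is what lets you conclude that the single $S^\mu$ in (2.3) is not spread over several summands and that no non--two-part Specht can appear. The paper instead argues by composition factors: it computes $(e_\kappa M^\lambda:D^\mu)$ two ways, once from the Specht filtration of $e_\kappa M^\lambda$ and once from its Young-module decomposition $Y^\lambda\oplus Y^{(n)}$, and observes these disagree unless $S^\mu$ occurs in $Y^\lambda$; a dimension count then rules out any extra $S^{(n)}$ factors. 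Your Specht-multiplicity bookkeeping is arguably cleaner once the Grothendieck-group fact is granted, while the paper's composition-factor trick is slightly more self-contained relative to what has been stated in the preliminaries.
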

\begin{proof}
Let $e_\kappa$ be the block idempotent of the $p$-block of $\F\sym{n}$ labelled by $\kappa$ and $M:=e_\kappa M^\lambda$. For (i), as $\kappa_p((n))=\kappa$, note that $M\sim S^\lambda+S^\mu+S^{(n)}$ by $(2.3)$ and the conditions. We also get $M\cong Y^\lambda\oplus Y^{(n)}$ from $(2.1)$, Theorem \ref{T;Henke}, Krull-Schmidt Theorem and the hypotheses of (i). Now consider about the filtration of $Y^\lambda$ in $(2.2)$ and suppose that $S^\mu$ is not a quotient factor of the filtration. By the conditions again, note that all the quotient factors of the filtration are the trivial modules except one $S^\lambda$. It contradicts with the equation $(M:D^\mu)=(Y^\lambda:D^\mu)$ by the condition (a). So $S^\mu$ is a quotient factor of the filtration. Then (i) follows by $(2.2)$, the condition (a) and counting dimensions of $M$ and $Y^\lambda$.

For (ii), by same reasons given in (i) and the hypotheses of (ii), we can deduce $M\sim S^\lambda+S^\mu$, $M\cong Y^\lambda$. Moreover, by $(2.3)$ and the condition (a), the quotient factor $S^\mu$ in the filtration of $M$ occurs on the top. The proof of (ii) is complete.
\end{proof}

We now turn to discuss restriction of the simple $\F\sym{n}$-modules labelled by two-part partitions. Restriction of these modules has been studied by Sheth in \cite{Sheth}. For a general version of induction and restriction of simple $\F\sym{n}$-modules, which is referred as modular branching rule, one can refer to
\cite[Theorems 11.2.7, 11.2.8]{A.K}.

The following result will be repeatedly used in the paper.
\begin{thm}\label{T;MBR}\cite[Theorem (ii), (iii)]{Sheth}
Let $2<p$ and $a$, $b$ be two integers. When $0<b<a$, we have
\[ D^{(a,b)}{\downarrow_{\sym{a+b-1}}}\cong\begin{cases} D^{(a-1,b)}\oplus D^{(a,b-1)}, &\text{if}\ b-2\not\equiv a-1,\ a\pmod p,\\
D^{(a-1,b)}, &\text{if}\ b-2\equiv a \ \ \ \ \ \ \ \ \ \pmod p.\end{cases}\]
\end{thm}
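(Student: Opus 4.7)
The plan is to prove Theorem \ref{T;MBR} by applying Kleshchev's modular branching rule, which describes the socle of $D^\mu{\downarrow_{\sym{n-1}}}$ in terms of the combinatorics of good $i$-nodes of the $p$-regular partition $\mu$. For $\mu=(a,b)$ with $0<b<a$ and $n=a+b$, the two removable nodes are $(1,a)$ and $(2,b)$, carrying residues $a-1$ and $b-2$ modulo $p$ respectively, while the relevant addable node is $(1,a+1)$, of residue $a$. I would tabulate these residues and, for each $i$, form the $i$-signature by listing addable and removable $i$-nodes along the rows and cancelling matched addable/removable pairs, so that the surviving removable nodes are the normal $i$-nodes and the lowest normal node is the good $i$-node.

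In Case 1 ($b-2\not\equiv a-1,a\pmod p$), the residue $b-2$ of $(2,b)$ differs from both $a-1$ and $a$, so no cancellation involves $(2,b)$; hence $(2,b)$ is a good $(b-2)$-node and $(1,a)$ is a good $(a-1)$-node, giving $D^{(a-1,b)}\oplus D^{(a,b-1)}$ in the socle of $D^{(a,b)}{\downarrow_{\sym{n-1}}}$. In Case 2 ($b-2\equiv a\pmod p$), the removable $(2,b)$ pairs with the addable $(1,a+1)$ under Kleshchev's cancellation, so $(2,b)$ is not normal; only $(1,a)$ remains good, and the socle is $D^{(a-1,b)}$.

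To upgrade the socle computation to a description of the entire restriction, I would use self-duality of simple modules (so head equals socle) and close the remaining gap by dimension counting. The Specht filtration of $M^{(a,b)}{\downarrow_{\sym{n-1}}}$ inherited from $(2.3)$, together with the dimension formula $(2.4)$ and the composition multiplicities supplied by Theorem \ref{T;James}, lets one evaluate $(D^{(a,b)}{\downarrow_{\sym{n-1}}}:D^{(a-1,b)})$ and $(D^{(a,b)}{\downarrow_{\sym{n-1}}}:D^{(a,b-1)})$ and match them against the socle multiplicities, forcing semisimplicity. The main obstacle is this final multiplicity check in Case 1: one must rule out hidden composition factors and non-split self-extensions of the summands inside $D^{(a,b)}{\downarrow_{\sym{n-1}}}$. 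The hypothesis $p>2$ keeps both $(a-1,b)$ and $(a,b-1)$ in $\mathfrak{Reg}_p(n-1)$ and reduces the bookkeeping to a tight comparison of the condition $k-s\sqsubseteq_p n-2s+1$ at ranks $n$ and $n-1$, which the arithmetic hypothesis on $b-2\pmod p$ controls sufficiently to finish.
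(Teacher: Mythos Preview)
The paper does not prove Theorem~\ref{T;MBR}; it is quoted from Sheth \cite{Sheth} without argument, so there is no proof in the paper to compare against.

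Your plan to recover the statement from Kleshchev's modular branching rule is sound, and the paper itself points to \cite[Theorems 11.2.7, 11.2.8]{A.K} as the general mechanism behind Sheth's result. Two remarks on the execution. First, your signature analysis names only the addable node $(1,a+1)$; the nodes $(2,b+1)$ and $(3,1)$ are also addable, but since they sit in rows strictly below every removable node they never produce a cancelling pair in the reduced $i$-signature, so your conclusion that both removable nodes are normal (hence good) in Case~1, and that only $(1,a)$ survives in Case~2, is correct. Second, and more importantly, the dimension-counting step you flag as ``the main obstacle'' is unnecessary. Kleshchev's theorem gives more than the socle: each block component $e_iD^\lambda$ of the restriction is self-dual with simple socle $D^{\tilde e_i\lambda}$, and $[e_iD^\lambda:D^{\tilde e_i\lambda}]=\varepsilon_i(\lambda)$, the number of normal $i$-nodes. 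In both cases of the theorem every residue carries at most one normal node, so $\varepsilon_i(\lambda)\le 1$; when $\varepsilon_i(\lambda)=1$ the simple $D^{\tilde e_i\lambda}$, being simultaneously the socle and (by self-duality) the head of $e_iD^\lambda$, exhausts its single occurrence as a composition factor and forces $e_iD^\lambda\cong D^{\tilde e_i\lambda}$. The restriction is therefore semisimple directly, with no appeal to $(2.3)$, $(2.4)$ or Theorem~\ref{T;James}.
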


\section{Proof of Theorem \ref{T;A}}
This section is designed to prove Theorem \ref{T;A}. Throughout the whole section, for the partitions $(n-1,1)$ and $(n-2,2)$, let $S_1:=S^{(n-1,1)}$ and $S_2:=S^{(n-2,2)}$. Moreover, let $D_1:=D^{(n-1,1)}$ and $D_2:=D^{(n-2,2)}$ if these partitions are $p$-regular. For any elementary abelian $p$-group $E$, notice that the trivial $\F E$-module always has stable generic Jordan type $[1]^1$.
\begin{nota}\label{N;notation3}
We fix some notations as follows.
\begin{enumerate}[(i)]
\item Let $P$ be a $p$-subgroup of $\sym{n}$. When $P$ acts on $[n]$ naturally, write $a_0(P)$ and $a_1(P)$ to denote the number of fixed points and the number of orbits of size $p$ respectively.
\item Let $2\leq n$ and $\{t_i\}$ be the $(n-1,1)$-tabloid with $i$ lying in the second row.  When $p=2$ and $2\mid n$, $D_1$ has a basis $\{\{t_1\}+\{t_i\}+\text{Rad}(S_1):\ 2\leq i\leq n-1\}$, where $\text{Rad}(S_1)$ is generated by $\sum_{i=1}^n\{t_i\}$. The basis is denoted by $\mathcal{B}_2^n$.
\item Let $4\leq n$ and $\{t_{i,j}\}$ be the $(n-2,2)$-tabloid with $i$, $j$ lying in the second row. Denote by $M_{0}^{2}(n)$ the set $$\{\sum_{1\leq i<j\leq n}k_{i,j}\{t_{i,j}\}\in M^{(n-2,2)}:\ \sum_{1\leq i<j\leq n}k_{i,j}=0\}.$$ Note that $M_{0}^{2}(n)$ is an $\F\sym{n}$-submodule of $M^{(n-2,2)}$. Also notice that $M_{0}^{2}(n)$ has a basis $\{\{t_{i,j}\}-\{t_{1,2}\}:\ 1\leq i<j \leq n,\ \{i,j\}\neq \{1,2\}\}$ which is denoted by $\B_{2,0}^n$. Moreover, there exists a short exact sequence
    \begin{equation}
    0\rightarrow M_{0}^{2}(n)\rightarrow M^{(n-2,2)}\xrightarrow{\delta}\F\rightarrow 0,
    \end{equation}
    where $\delta$ is the augmentation map from $M^{(n-2,2)}$ to $\F$.
\item Let $C_p$ be a cyclic group of order $p$. For a positive integer $m$, there exists a left regular action from $(C_p)^m$ to itself which is faithful. This action induces an injective group homomorphism from $(C_p)^m$ to $\sym{p^m}$. The homomorphic image is denoted by $R_{m,p}$.
\item Let $a$ be an integer where $0\leq 4a\leq n$. If $0<a$, let $K_a$ be the elementary abelian $2$-subgroup $\langle \cup_{i=1}^a\{k_{i,1},k_{i,2}\}\rangle$ of $\sym{n}$ where $k_{i,1}:=(4i-3,4i-2)(4i-1,4i)$ and $k_{i,2}:=(4i-3,4i-1)(4i-2,4i).$ Set $K_0$ to be the trivial group and note that $K_a$ has $2$-rank $2a$.
\item Let $2\mid n$ and $\ell$, $m$ be two integers such that $0\leq 4\ell\leq n$, $m=\frac{n-4\ell}{2}$. If $\ell<\frac{n}{4}$, let $F_\ell$ be the $2$-subgroup $\langle\cup_{i=1}^{m}\{s_{\ell,i}\}\rangle$ of $\sym{n}$ where $s_{\ell,i}:=(4\ell+2i-1,4\ell+2i)$. Set $F_{n/4}$ to be the trivial group if $4\mid n$ and note that the $2$-rank of $F_\ell$ is $m$. Moreover, $F_0=E_{n/2}$ if $p=2$.
\end{enumerate}
\end{nota}
We need the following lemmas as preparation.

\begin{lem}\label{L;(n-1,1)sjt}
Let $E$ be an elementary abelian $p$-subgroup of $\sym{n}$. Let $a_0$ and $a_1$ be $a_0(E)$ and $a_1(E)$ respectively. Then $S_1{\downarrow_E}$ has stable generic Jordan type \[\begin{cases} [p-1]^1, &\text{if}\ a_{0}=0,\\
[1]^{a_0-1}, &\text{if}\ 0<a_0.
\end{cases}\]
\end{lem}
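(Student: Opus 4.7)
The plan is to restrict the standard augmentation short exact sequence
\[
0 \to S_1 \to M^{(n-1,1)} \xrightarrow{\sigma} \F \to 0
\]
of $\F\sym{n}$-modules to $E$, where $\sigma$ sends every $(n-1,1)$-tabloid $\{t_i\}$ to $1\in\F$. As a preliminary step I would invoke Lemma \ref{PModules}: because $(n-1,1)$ has a single length-one row, the unique way to insert an orbit from $[n]/E$ into that row is to pick a fixed point, and any such choice determines the filling of the long row uniquely. Hence $M^{(n-1,1)}{\downarrow_E}$ has stable generic Jordan type $[1]^{a_0(E)}$.

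Next, the argument would split according to the value of $a_0(E)$. When $a_0(E)=0$, the module $M^{(n-1,1)}{\downarrow_E}$ is generically free, so Proposition \ref{P;sjt}(iv)(b) applied to the restricted sequence forces $S_1{\downarrow_E}$ and the trivial module $\F$ to carry mutually complementary stable generic Jordan types; since $\F$ has stable generic Jordan type $[1]^1$, this yields $[p-1]^1$ for $S_1{\downarrow_E}$, which matches the first branch of the statement.

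When $0<a_0(E)$, I would choose any fixed point $f\in[n]$ of $E$. The tabloid $\{t_f\}$ is then $E$-invariant in $M^{(n-1,1)}$ and satisfies $\sigma(\{t_f\})=1$, so the assignment $1\mapsto\{t_f\}$ is an $\F E$-linear section of $\sigma$. Consequently the restricted sequence splits as $\F E$-modules, giving $M^{(n-1,1)}{\downarrow_E}\cong S_1{\downarrow_E}\oplus\F$; subtracting stable generic Jordan types via Proposition \ref{P;sjt}(iii) then yields $[1]^{a_0(E)-1}$ for $S_1{\downarrow_E}$.

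No serious obstacle is expected. The only point requiring care is the splitting observation in the $0<a_0(E)$ case: the presence of an $E$-fixed element of $[n]$ produces an $\F E$-splitting of the augmentation sequence, and this is what cleanly distinguishes the two branches of the stated formula.
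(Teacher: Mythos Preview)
Your argument is correct and, for the $a_0=0$ branch, effectively coincides with the paper's: the paper notes that $a_0=0$ forces $p\mid n$, identifies $M^{(n-1,1)}$ with $Y^{(n-1,1)}$ via Theorem~\ref{T;Henke}, and then applies Proposition~\ref{P;sjt}(iv)(b) to the same short exact sequence you use (since $S^{(n)}=\F$). Your route is marginally cleaner because you work directly with $M^{(n-1,1)}$ and need neither the observation $p\mid n$ nor the Young-module identification.

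In the $a_0>0$ branch the two proofs diverge. The paper conjugates $E$ so that $1$ is a fixed point and then observes that the standard basis $\{\{t_i\}-\{t_1\}:2\le i\le n\}$ of $S_1$ is permuted by $E$; Lemma~\ref{T;permutaiton} then counts $a_0-1$ fixed basis vectors. You instead produce an $\F E$-splitting of the augmentation sequence via the invariant tabloid $\{t_f\}$ and read off the answer by subtracting generic Jordan types. Both arguments are short; yours has the aesthetic advantage of treating the two cases via a single exact sequence, while the paper's makes explicit that $S_1{\downarrow_E}$ is itself a $p$-permutation module in this case, which is a slightly stronger structural statement than the splitting alone records.
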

\begin{proof}
When $a_0=0$, observe that $p\mid n$. In the case, we have $M^{(n-1,1)}\cong Y^{(n-1,1)}$ by Theorem \ref{T;Henke} and $Y^{(n-1,1)}{\downarrow_E}$ is generically free by Lemma \ref{PModules}. We also have a short exact sequence $0\rightarrow S_1{\downarrow_E}\rightarrow Y^{(n-1,1)}{\downarrow_E}\rightarrow S^{(n)}{\downarrow_E}\rightarrow 0.$ The stable generic Jordan type of $S_1{\downarrow_E}$ is $[p-1]^1$ by using Proposition \ref{P;sjt} (iv) (b) to above short exact sequence. For the left case $a_0\neq 0$, we may assume that $\{t_1\}$ is fixed by $E$. Then we note that $S_1{\downarrow_E}$ is itself a $p$-permutation $\F E$-module since the standard basis of $S_1$ can be permuted by $E$ and its subgroups. The result in the case now follows by Lemma \ref{T;permutaiton}.
\end{proof}

\begin{lem}\label{L;sjt(n-2,2)1}
Let $4\leq n$ and $E$ be an elementary abelian $p$-subgroup of $\sym{n}$. Let $a_0$ and $a_1$ be $a_0(E)$ and $a_1(E)$ respectively. Then $M_{0}^2(n){\downarrow_E}$ has stable generic Jordan type \[\begin{cases} [p-1]^1, & \text{if}\ 2<p,\ a_0<2\ \text{or}\ p=2,\ a_0<2,\ a_1=0,\\
[1]^{{a_0\choose 2}-1}, &\text{if}\ 2<p,\ 2\leq a_0,\\
[1]^{{a_0\choose 2}+a_1-1}, &\text{otherwise}.
\end{cases}\]
\end{lem}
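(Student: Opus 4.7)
The plan is to reduce the computation to $M^{(n-2,2)}{\downarrow_E}$, for which Lemma \ref{PModules} applies directly, and then to pass through the defining short exact sequence $(3.1)$ in dualised form. First, I apply Lemma \ref{PModules} to $M^{(n-2,2)}{\downarrow_E}$. The $E$-orbits of $[n]$ consist of $a_0$ fixed points and $a_1$ orbits of size $p$, and the second row of $(n-2,2)$ has length $2$, which admits only two fillings: two fixed points ($\binom{a_0}{2}$ ways) or, when $p=2$, a single size-$2$ orbit ($a_1$ ways). Hence $M^{(n-2,2)}{\downarrow_E}$ has stable generic Jordan type $[1]^m$ with $m=\binom{a_0}{2}$ for $2<p$ and $m=\binom{a_0}{2}+a_1$ for $p=2$.

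I then refine this with an explicit $\F E$-module decomposition $M^{(n-2,2)}{\downarrow_E}=\bigoplus_\mathcal{O}\F\mathcal{O}$, indexed by $E$-orbits on $(n-2,2)$-tabloids. A tabloid $\{t_{i,j}\}$ is $E$-fixed precisely when the pair $\{i,j\}\subseteq [n]$ is, which happens for the pairs of two fixed points and, when $p=2$, for the pairs $\{i,j\}$ equal to a size-$2$ orbit, for a total of $m$ singleton orbits. Their trivial summands assemble into a submodule $V_1\cong\F^m$, and every remaining orbit gives $\F[E/H]$ with $H\lneq E$, hence generically free by Proposition \ref{P;sjt}(v). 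Extending scalars to $\K$, the generic Jordan decomposition splits as $V_1\oplus V_{\mathrm{free}}$, with $V_1$ realising the entire stable $[1]^m$ part and $V_{\mathrm{free}}$ being a sum of blocks of size $p$.

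Next, I dualise $(3.1)$; by self-duality of the permutation module $M^{(n-2,2)}$ this gives
\[ 0\to\F\to M^{(n-2,2)}\to M_0^2(n)^*\to 0, \]
whose embedding sends $1$ to $\sigma:=\sum_{1\leq i<j\leq n}\{t_{i,j}\}$. Proposition \ref{P;sjt}(ii) then reduces the lemma to computing the stable generic Jordan type of $(M^{(n-2,2)}\otimes\K)/\langle\sigma\rangle$ as a $\K\langle u_\alpha\rangle$-module. Writing $\sigma=\sigma_1+\sigma_{\mathrm{free}}$ along $V_1\oplus V_{\mathrm{free}}$, the key observation is that $\sigma_1$ is the sum of \emph{all} basis vectors of $V_1$ and therefore is non-zero precisely when $m\geq 1$; in either case $\sigma_{\mathrm{free}}$ is $\langle u_\alpha\rangle$-fixed and lies in the socle of $V_{\mathrm{free}}$.

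Finally, I finish with a case split. If $m\geq 1$, pick an $\F$-complement $V_1=\F\sigma_1\oplus V_1'$; then $V_1'\oplus V_{\mathrm{free}}$ maps isomorphically onto the quotient, whose stable generic Jordan type is $[1]^{m-1}$, matching both $[1]^{\binom{a_0}{2}-1}$ (when $2<p$, $2\leq a_0$) and $[1]^{\binom{a_0}{2}+a_1-1}$ (in the ``otherwise'' case). If $m=0$ (i.e.\ the first case of the lemma), $\sigma=\sigma_{\mathrm{free}}$ is a non-zero socle element of $V_{\mathrm{free}}$; a change of basis inside $V_{\mathrm{free}}$ places $\sigma$ as the socle of a single block of size $p$, after which the quotient has stable generic Jordan type $[p-1]^1$. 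The main subtlety is justifying that the $\F$-linear orbit decomposition genuinely separates the stable and free parts of the generic Jordan form over $\K$, which amounts to applying Proposition \ref{P;sjt}(v) orbit-by-orbit and checking that the resulting decomposition is preserved under scalar extension.
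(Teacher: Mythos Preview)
Your argument is correct, but it takes a different route from the paper's. The paper treats the case $m=0$ essentially as you do (using the exact sequence $(3.1)$ together with Proposition~\ref{P;sjt}(iv)(b), without bothering to dualise), but for $m\geq 1$ it avoids the dual sequence and the explicit Jordan-block manipulation altogether. Instead, after conjugating $E$ so that the tabloid $\{t_{1,2}\}$ is $E$-fixed (possible whenever $a_0\geq 2$, or $p=2$ and $a_1>0$), the basis $\mathcal{B}_{2,0}^n=\{\{t_{i,j}\}-\{t_{1,2}\}\}$ is literally permuted by $E$, so $M_0^2(n){\downarrow_E}$ is itself a $p$-permutation module and Lemma~\ref{T;permutaiton} gives the answer $[1]^{m-1}$ immediately. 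Your approach is more uniform in that a single mechanism covers all cases, and it makes transparent exactly where the ``$-1$'' comes from (quotienting by the line through $\sigma$); the paper's approach is shorter and sidesteps both the dualisation and the socle argument in $V_{\mathrm{free}}$. One small imprecision: your opening sentence suggests the $E$-orbits on $[n]$ consist \emph{only} of fixed points and orbits of size $p$, which need not be true; however, since larger orbits cannot fit in a row of length $2$, your computation of $m$ is unaffected.
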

\begin{proof}
When $2<p$, $a_0<2$ or $p=2$, $a_0<2$, $a_1=0$, observe that $M^{(n-2,2)}{\downarrow_E}$ is generically free by Lemma \ref{PModules}. The fact implies that $M_{0}^2(n){\downarrow_E}$ has stable generic Jordan type $[p-1]^1$ by applying Proposition \ref{P;sjt} (iv) (b) to the short exact sequence in $(3.1)$. When $2<p$ and $2\leq a_0$, we may assume that $\{t_{1,2}\}$ is fixed by $E$. Note that $M_{0}^2(n){\downarrow_E}$ is a $p$-permutation $\F E$-module since $\B_{2,0}^n$ can be permuted by $E$ and its subgroups. We get the desired stable generic Jordan type of $M_{0}^2(n){\downarrow_E}$ by using Lemma \ref{T;permutaiton}. In all other cases, namely $p=2$, $2\leq a_0$  or $p=2$, $a_0<2$, $0<a_1$, we may also assume that $\{t_{1,2}\}$ is fixed by $E$.  So $M_{0}^2(n){\downarrow_E}$ is still a $p$-permutation $\F E$-module as $\B_{2,0}^n$ is also permuted by $E$ and its subgroups. We thus get the desired stable generic Jordan type of $M_{0}^2(n){\downarrow_E}$ by using Lemma \ref{T;permutaiton} again.
\end{proof}

\begin{lem}\label{L;sjt(n-2,2)2}
Let $p\nmid n$ and $E$ be an elementary abelian $p$-subgroup of $\sym{n}$. Let $a_0$ and $a_1$ be $a_0(E)$ and $a_1(E)$ respectively. Then $S_2{\downarrow_E}$ has stable generic Jordan type \[\begin{cases} [p-1]^1, & \text{if}\ 2<p,\ a_0<3\ \text{or}\ p=2,\ a_0<3,\ a_1=0,\\
[1]^{{a_0-1\choose 2}-1}, &\text{if}\ 2<p,\ 3\leq a_0,\\
[1]^{{a_0-1\choose 2}+a_1-1}, &\text{otherwise}.
\end{cases}\]
\end{lem}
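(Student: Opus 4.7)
The plan is to derive the stable generic Jordan type of $S_2 \downarrow_E$ from the short exact sequence $0 \to S_2 \to M_0^2(n) \to S_1 \to 0$, which I first establish from the filtration (2.3) of $M^{(n-2,2)}$: that filtration has successive quotients $S_2$, $S_1$, $\F$, and its middle term coincides with $M_0^2(n) = \ker \delta$, since $\delta$ is (up to scalar) the unique quotient map from $M^{(n-2,2)}$ onto the trivial module. Note that the hypothesis $p \nmid n$ forces $a_0 \geq 1$, as $a_0 + p a_1 = n$.

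The main tool is Proposition \ref{P;sjt} (iv) applied to this sequence after restriction to $\F E$. If $a_0 = 1$, then $S_1 \downarrow_E$ has stable generic Jordan type $[1]^{a_0 - 1} = \varnothing$ by Lemma \ref{L;(n-1,1)sjt}, hence is generically free; Proposition \ref{P;sjt} (iv)(c) then equates the stable generic Jordan type of $S_2 \downarrow_E$ with that of $M_0^2(n) \downarrow_E$, which is supplied by Lemma \ref{L;sjt(n-2,2)1}. If instead $p > 2$ and $a_0 = 2$, Lemma \ref{L;sjt(n-2,2)1} gives $M_0^2(n) \downarrow_E$ generically free, and Proposition \ref{P;sjt} (iv)(b) forces $S_2 \downarrow_E$ to be mutually complementary to $S_1 \downarrow_E = [1]^1$, yielding $[p-1]^1$.

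The main obstacle is the case $a_0 \geq 3$, where neither term in the sequence is generically free. Here my plan is to exploit the Young module decomposition $M^{(n-2,2)} \cong Y^{(n-2,2)} \oplus k_1 S_1 \oplus k_2 \F$ supplied by Theorem \ref{T;Henke}, with $k_i = \varphi(n, 2, i)$ and the identifications $Y^{(n-1,1)} = S_1$, $Y^{(n)} = \F$ valid under $p \nmid n$; the pair $(k_1, k_2) = (0, 0)$ is ruled out by $p \nmid n$, and $k_1 = 1$ is automatic when $p = 2$. Tracing that $\delta$ annihilates every $S_1$-summand (since $S_1 \cong D_1 \ncong \F$) while restricting non-trivially to the $\F$-summand if $k_2 = 1$ and to $Y^{(n-2,2)}$ if $k_2 = 0$, one checks that $M_0^2(n) \cong S_2 \oplus S_1$ as $\F \sym{n}$-modules whenever $k_1 = 1$. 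Restricting this splitting to $E$ and using additivity of stable generic Jordan types on direct summands then yields $S_2 \downarrow_E = [1]^{m - a_0}$ with $m = \binom{a_0}{2} + \delta_{p,2}\, a_1$, matching parts (ii) and (iii) of the statement via the identity $\binom{a_0}{2} - a_0 = \binom{a_0-1}{2} - 1$.

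The remaining subcase is $p > 2$, $a_0 \geq 3$, and $k_1 = 0$ (equivalently $n \equiv 2 \pmod p$), where $M_0^2(n) \cong Y^{(n-2,2)}$ is indecomposable and the short exact sequence does not split over $\F \sym{n}$. I expect this to be the most delicate step. My approach is to use that $Y^{(n-2,2)}$ is a $p$-permutation module, so $Y^{(n-2,2)} \downarrow_E = [1]^{m-1}$ by Lemma \ref{T;permutaiton} (or equivalently by subtracting the stable generic Jordan type of the $\F$-summand from $M^{(n-2,2)} \downarrow_E$), and to combine this with the Loewy structure of $Y^{(n-2,2)}$ — determined from the decomposition matrix via Theorem \ref{T;James} — in a Brauer-quotient style additivity argument to show that $S_2 \downarrow_E$ still has stable generic Jordan type $[1]^{m - a_0}$. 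The principal obstacle is establishing this additivity rigorously in the absence of a global splitting of the sequence.
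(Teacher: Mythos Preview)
Your approach is genuinely different from the paper's, and it has a real gap in exactly the subcase you flag.

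The paper's proof is a one-line reduction: since $p\nmid n$, the group $E$ fixes some point of $[n]$, so after conjugation $E\leq\sym{n-1}$; then Peel's result \cite[Lemma~3]{Peel} gives $S_2{\downarrow_{\sym{n-1}}}\cong M_0^2(n-1)$ (and $S^{(2,2)}{\downarrow_{\sym{3}}}\cong S^{(2,1)}$ for $n=4$). Now $E$ acts on $[n-1]$ with $a_0-1$ fixed points and $a_1$ orbits of size $p$, and Lemma~\ref{L;sjt(n-2,2)1} applied to $M_0^2(n-1)$ immediately yields all three cases of the statement, with no further splitting into subcases. This bypasses your short exact sequence entirely and, in particular, never needs to know whether $M_0^2(n)$ splits as $S_2\oplus S_1$.

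Your route via $0\to S_2\to M_0^2(n)\to S_1\to 0$ and the Young-module decomposition of $M^{(n-2,2)}$ is correct whenever $k_1=1$ (and your argument that $M_0^2(n)\cong S_2\oplus S_1$ there can indeed be made rigorous). But in the residual case $p>2$, $n\equiv 2\pmod p$, $a_0\geq 3$, the sequence does \emph{not} split over $\F\sym{n}$: here $Y^{(n-2,2)}=M_0^2(n)$ is indecomposable with Specht filtration $S_2,S_1$. Your proposed ``Brauer-quotient style additivity'' is not a theorem: stable generic Jordan types are not additive on arbitrary short exact sequences, and the fact that $M_0^2(n){\downarrow_E}$ and $S_1{\downarrow_E}$ are both $p$-permutation $\F E$-modules does not force the restricted sequence to split (e.g.\ $\F E\twoheadrightarrow\F$ does not split). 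So as written this subcase remains open, whereas the paper's restriction-to-$\sym{n-1}$ argument handles it for free.

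A minor correction: your equation $a_0+pa_1=n$ is not valid in general, since $E$ may have orbits of size $p^k$ for $k\geq 2$. The conclusion $a_0\geq 1$ you draw from it is still correct, but the reason is simply that $a_0\equiv n\pmod p$.
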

\begin{proof}
When $p\nmid n$, note that $E$ can be viewed as an elementary abelian $p$-subgroup of $\sym{n-1}$. In the case $4<n$, we have $S_2{\downarrow_E}\cong S_2{\downarrow_{\sym{n-1}}}{\downarrow_E}\cong M_{0}^2(n-1){\downarrow_E}$ according to \cite[Lemma 3]{Peel}. When $n=4$, we have $S^{(2,2)}{\downarrow_E}\cong S^{(2,1)}{\downarrow_E}$ by Branching Rule. The proof is now complete by Lemmas \ref{L;sjt(n-2,2)1} and \ref{L;(n-1,1)sjt}.
\end{proof}

\begin{lem}\label{L;p|n(n-2,2)}
Let $2<p$ and $p\mid n$. Let $E$ be an elementary abelian $p$-subgroup of $\sym{n}$ and $a_0$ be $a_0(E)$. Then $S_2{\downarrow_E}$ has stable generic Jordan type $[1]^{{a_0\choose 2}-a_0}$.
\end{lem}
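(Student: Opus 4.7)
The plan is to reduce the problem to a computation on Young permutation modules by exploiting the fact that, when $p\mid n$ and $p>2$, the Specht module $S_2$ is itself a direct summand of $M^{(n-2,2)}$; the stable generic Jordan type of $S_2\downarrow_E$ can then be read off from the purely combinatorial formula of Lemma \ref{PModules}.

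First I would compute the relevant $p$-Kostka numbers via Theorem \ref{T;Henke}. Since $p\mid n$ forces $n_0=0$, the condition $2\subseteq_p n$ fails and hence $\varphi(n,2,0)=0$; on the other hand $(n-2)_0=p-2\geq 1$ gives $\varphi(n,2,1)=1$, while $\varphi(n,2,2)=1$ is automatic. Thus by $(2.1)$ we have $M^{(n-2,2)}\cong Y^{(n-1,1)}\oplus Y^{(n-2,2)}$, and the same calculation with $k=1$ also gives $M^{(n-1,1)}\cong Y^{(n-1,1)}$. By the defining property of $Y^{(n-2,2)}$ we have $S_2\subseteq Y^{(n-2,2)}$, and the dimension identity
\[\dim_\F S_2={n\choose 2}-n=\dim_\F M^{(n-2,2)}-\dim_\F M^{(n-1,1)}=\dim_\F Y^{(n-2,2)},\]
obtained from $(2.4)$ and the isomorphisms above, forces equality; thus $M^{(n-2,2)}\cong M^{(n-1,1)}\oplus S_2$.

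Finally, I would apply Lemma \ref{PModules} to both summands. Since $p>2$, every non-trivial $E$-orbit on $[n]$ has size at least $p>2$ and so cannot occupy the short second row of either $(n-2,2)$ or $(n-1,1)$; those rows must be filled by fixed points, giving ${a_0\choose 2}$ and $a_0$ unordered insertions respectively. Proposition \ref{P;sjt}(iii) applied to the decomposition $M^{(n-2,2)}\cong M^{(n-1,1)}\oplus S_2$ then yields stable generic Jordan type $[1]^{{a_0\choose 2}-a_0}$ for $S_2\downarrow_E$. Nonnegativity of ${a_0\choose 2}-a_0$ is automatic since $p\mid n$ forces $a_0\equiv 0\pmod p$, so $a_0=0$ or $a_0\geq p\geq 3$. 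The only step that feels like a genuine input is the dimension coincidence promoting $S_2\subseteq Y^{(n-2,2)}$ to equality; the rest is routine combinatorics together with additivity of stable generic Jordan types.
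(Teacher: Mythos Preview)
Your argument is correct. Both you and the paper obtain the result by exhibiting a direct-sum decomposition in which $S_2$ sits as a complement of something whose stable generic Jordan type is already known, and then invoking Proposition~\ref{P;sjt}(iii). The difference lies in the ambient module and the source of the splitting. The paper uses Peel's theorem to write $M_0^2(n)\cong S_1\oplus S_2$ (valid because $p\nmid n-2$), and then reads off the two pieces from Lemmas~\ref{L;(n-1,1)sjt} and~\ref{L;sjt(n-2,2)1}. You instead work one level up, using Henke's $p$-Kostka numbers to obtain $M^{(n-2,2)}\cong Y^{(n-1,1)}\oplus Y^{(n-2,2)}$ with $Y^{(n-1,1)}=M^{(n-1,1)}$, and a dimension count against $(2.4)$ to promote $S_2\subseteq Y^{(n-2,2)}$ to equality; you then apply Lemma~\ref{PModules} directly to the two permutation modules. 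Your route avoids the external Peel reference and the intermediate Lemmas~\ref{L;(n-1,1)sjt}--\ref{L;sjt(n-2,2)1}, at the cost of the extra observation $S_2=Y^{(n-2,2)}$; the paper's route is shorter because those lemmas are already in hand. Either way the arithmetic is the same, since $\binom{a_0}{2}-a_0=(\binom{a_0}{2}-1)-(a_0-1)$.
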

\begin{proof}
When $2<p$ and $p\mid n$, we have $p\nmid n-2$. By \cite[Theorems 1, 2]{Peel}, it means $M_{0}^2(n)\cong S_1\oplus S_2$. Note that $a_0\neq 1$, $2$ in the case as $p\mid n$. The lemma follows by Lemmas \ref{L;(n-1,1)sjt}, \ref{L;sjt(n-2,2)1} and Proposition \ref{P;sjt} (iii).
\end{proof}

For our purpose, one more lemma is needed for the case $p=2$.

\begin{lem}\label{L;p=2,sjt}\cite[Lemma 5.9]{Lim1}
Let $p=2$ and $2\mid n$. Then $S_2{\downarrow_{E_{n/2}}}$ has stable generic Jordan type
$[1]^{\frac{n}{2}-2}$.
\end{lem}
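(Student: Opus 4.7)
The plan is to exploit the short exact sequence
\[
0\to S_2\to M_0^2(n)\xrightarrow{\psi}S_1\to 0,
\]
where $\psi$ is the restriction to $M_0^2(n)$ of the map $M^{(n-2,2)}\to S_1\subseteq M^{(n-1,1)}$ defined by $\{t_{i,j}\}\mapsto\{t_i\}+\{t_j\}$. Existence can be verified by noting that every polytabloid lies in the augmentation kernel $M_0^2(n)$ and is killed by $\psi$, and that a basis of $S_1$ is realised explicitly by $\psi$-images of elements of $M_0^2(n)$, forcing surjectivity and identifying the kernel as $S_2$ via the dimension match $\dim S_2=\dim M_0^2(n)-\dim S_1$. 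Restricting to $E:=E_{n/2}$ and using $a_0(E)=0$, $a_1(E)=n/2$ in Lemmas \ref{L;sjt(n-2,2)1} and \ref{L;(n-1,1)sjt}, I obtain that $M_0^2(n){\downarrow_E}$ and $S_1{\downarrow_E}$ have stable generic Jordan types $[1]^{n/2-1}$ and $[1]^1$ respectively; combined with $\dim M_0^2(n)=\binom{n}{2}-1$ and $\dim S_1=n-1$, this yields $\dim M_0^2(n)^{u_\alpha}=(n^2-4)/4$ and $\dim S_1^{u_\alpha}=n/2$ for a generic element $u_\alpha\in\K E$.

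The main obstacle is to show that the induced map on generic fixed points $M_0^2(n)^{u_\alpha}\to S_1^{u_\alpha}$ is surjective. To carry this out, $M^{(n-2,2)}{\downarrow_E}$ decomposes as an $\F E$-module into the $n/2$ trivial summands spanned by $o_k:=\{t_{2k-1,2k}\}$ together with $\binom{n/2}{2}$ free $\F[\langle s_k,s_l\rangle]$-summands, one on each ``crossed'' orbit of tabloids $\{t_{a,b}\}$ with $a\in\{2k-1,2k\}$ and $b\in\{2l-1,2l\}$ for $k\ne l$. A direct computation on each such free summand exhibits a vector $v_{k,l}\in\ker(u_\alpha-1)$ with $\psi(v_{k,l})=\alpha_k e_k+\alpha_l e_l$, where $e_k:=\{t_{2k-1}\}+\{t_{2k}\}$ runs over a basis of $S_1^{u_\alpha}$. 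Combined with the identity $\psi(o_k-o_l)=e_k+e_l$, each basis vector $e_{k_0}$ can be lifted to a generic $u_\alpha$-invariant in $M_0^2(n)$ by solving a $2\times(n/2)$ linear system over $\K=\F(\alpha_1,\ldots,\alpha_{n/2})$; for $n/2\ge 2$ solvability is guaranteed by the $\K$-linear independence of $(1,\ldots,1)$ and $(\alpha_1,\ldots,\alpha_{n/2})$.

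With surjectivity in hand, exactness on fixed points yields $\dim S_2^{u_\alpha}=(n^2-4)/4-n/2=(n^2-2n-4)/4$. Since $p=2$, the generic Jordan type of $S_2{\downarrow_E}$ involves only $[1]$- and $[2]$-blocks, so the number of $[1]$-blocks equals $2\dim S_2^{u_\alpha}-\dim S_2=n/2-2$, giving the claimed stable generic Jordan type $[1]^{n/2-2}$.
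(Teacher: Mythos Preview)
The paper does not prove this lemma itself; it simply cites \cite[Lemma~5.9]{Lim1}. Your self-contained argument is correct and takes a different route, so it furnishes an independent proof using only the machinery already set up in the paper.

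The short exact sequence $0\to S_2\to M_0^2(n)\xrightarrow{\psi}S_1\to 0$ is indeed valid in characteristic~$2$: each polytabloid has coefficient sum $4=0$ and is annihilated by $\psi$, and a dimension count together with surjectivity of $\psi$ on $M_0^2(n)$ identifies the kernel with $S_2$. The delicate step is the surjectivity of $\psi$ on generic $u_\alpha$-invariants, and your sketch goes through; an explicit witness in the $(k,l)$-crossed summand is
\[
v_{k,l}=\alpha_k\{t_{2k-1,2l-1}\}+(\alpha_k+\alpha_l)\{t_{2k,2l-1}\}+\alpha_l\{t_{2k,2l}\},
\]
which is $u_\alpha$-fixed, has vanishing coefficient sum (hence lies in $M_0^2(n)$), and satisfies $\psi(v_{k,l})=\alpha_ke_k+\alpha_le_l$. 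Since $\alpha_k+\alpha_l\neq 0$ in $\K$, this vector lies outside the sum-zero hyperplane spanned by the images $\psi(o_k-o_l)=e_k+e_l$, so the image is all of $S_1^{u_\alpha}$. The final block count $2\dim S_2^{u_\alpha}-\dim S_2=n/2-2$ is then routine. What your approach buys over the bare citation is that it stays entirely within the framework of Lemmas~\ref{L;(n-1,1)sjt} and~\ref{L;sjt(n-2,2)1} and an explicit invariant computation, avoiding any dependence on \cite{Lim1}.
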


We state a result to figure out all maximal elementary abelian $p$-subgroups of $\sym{n}$.

\begin{thm}\label{T;Maximal subgroup}\cite[$\S$ VI Theorem 1.3]{AM}
Conjugate representatives of all maximal elementary abelian $p$-subgroups of $\sym{n}$ are
\begin{align*}
 &\underbrace{R_{1,p}\times\cdots \times R_{1,p}}_{\text{$t_1$\ times}}\times\cdots\times\underbrace{R_{\ell,p}\times\cdots \times R_{\ell,p}}_{\text{$t_\ell$\ times}}\hookrightarrow\\
&\underbrace{\sym{p}\times\cdots \times \sym{p}}_{\text{$t_1$\ times}}\times\cdots\times\underbrace{\sym{p^\ell}\times\cdots \times \sym{p^\ell}}_{\text{$t_\ell$\ times}}\hookrightarrow \sym{n},
\end{align*}
where we have $n=\sum_{i=0}^\ell p^it_i$ for some non-negative integer $\ell$ such that all $t_0,\cdots,t_\ell$ are non-negative integers and $0\leq t_0<p$.
\end{thm}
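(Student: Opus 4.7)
The plan is to analyze the orbit structure of an arbitrary elementary abelian $p$-subgroup $E$ of $\sym{n}$ and use maximality to pin down its form. First, decompose $[n]$ into its $E$-orbits; since $E$ is a $p$-group, each orbit has cardinality $p^a$ for some $a\geq 0$. For each orbit $O$ of size $p^a$ with $a\geq 1$, the restriction $E|_O$ inside $\sym{O}$ is a transitive elementary abelian $p$-subgroup. Any transitive abelian permutation group acts regularly, so $E|_O$ has order exactly $p^a$ and, after a suitable labelling of $O$, is conjugate in $\sym{O}$ to $R_{a,p}$.

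Next, the restriction-to-orbits map $E\hookrightarrow \prod_j E|_{O_j}$ is injective, and the codomain, naturally embedded in $\sym{O_1}\times\cdots\times\sym{O_k}\hookrightarrow\sym{n}$, is itself an elementary abelian $p$-subgroup containing $E$. Maximality then forces $E = \prod_j E|_{O_j}$. Furthermore, if the number $t_0$ of fixed points of $E$ on $[n]$ were at least $p$, adjoining a $p$-cycle supported on $p$ of those fixed points would produce a strictly larger elementary abelian $p$-subgroup, contradicting maximality. Hence $0\leq t_0<p$. Collecting the orbits by size, with $t_i$ denoting the number of orbits of length $p^i$, we deduce that $E$ is $\sym{n}$-conjugate to $\prod_{i=1}^\ell (R_{i,p})^{t_i}$, with $n=\sum_{i=0}^\ell p^i t_i$.

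Conversely, I would verify that each such product $H$ is genuinely maximal. Suppose $\tau\in\sym{n}$ has order $p$ and centralizes $H$; since $H$ contains elements supported on only one of its non-trivial orbits, the conjugation action of $\tau$ on $H$ forces $\tau$ to fix every $H$-orbit setwise. On an orbit $O$ of size $p^i$, $\tau|_O$ commutes with the regular action of $R_{i,p}$, and hence lies in $R_{i,p}$ itself, using the standard fact that the centralizer of a regular subgroup of $\sym{O}$ equals the subgroup. On the $t_0<p$ fixed points, $\tau$ acts as a $p$-element of a symmetric group on fewer than $p$ letters, so acts trivially there. Thus $\tau\in H$. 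Finally, distinct tuples $(t_0,t_1,\ldots,t_\ell)$ produce non-conjugate subgroups because conjugation in $\sym{n}$ preserves the cycle type of each element, and hence the orbit decomposition on $[n]$.

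The hard part will be the centralizer computation in the converse direction, and in particular cleanly ruling out that any element $\tau$ permuting $H$-orbits of equal size could centralize $H$; the presence of $H$-elements supported on exactly one orbit is the decisive fact that excludes such extensions and collapses $\tau$ into a product of locally-regular pieces already lying in $H$.
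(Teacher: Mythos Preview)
The paper does not supply its own proof of this theorem; it is quoted from Adem--Milgram \cite[\S VI Theorem 1.3]{AM} and used as a black box. So there is no ``paper's proof'' to compare your proposal against.

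That said, your argument is correct and self-contained. The forward direction is clean: orbit decomposition, regularity of transitive abelian actions, and the observation that $E$ embeds into $\prod_j E|_{O_j}$ (which is again elementary abelian $p$) together force $E=\prod_j E|_{O_j}$ by maximality, while $t_0\geq p$ would allow enlargement by a disjoint $p$-cycle. For the converse, your key step---that any $\tau$ centralising $H$ must stabilise each $H$-orbit setwise because $H$ contains non-trivial elements supported on a single orbit---is exactly right: if $h\in H$ has support equal to the orbit $O$ (which it does, since $R_{i,p}$ acts regularly on $O$), then $\tau h\tau^{-1}=h$ forces $\tau(O)=O$. After that, the fact that an abelian regular subgroup of $\mathrm{Sym}(O)$ is its own centraliser finishes the job. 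The non-conjugacy claim follows immediately from the invariance of the orbit-length multiset under conjugation.

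One small point of presentation: when you write ``the restriction-to-orbits map $E\hookrightarrow\prod_j E|_{O_j}$ is injective, and the codomain \ldots\ is itself an elementary abelian $p$-subgroup containing $E$,'' it is worth making explicit that you are viewing both $E$ and $\prod_j E|_{O_j}$ as subgroups of $\sym{n}$ via the natural embedding $\prod_j\mathrm{Sym}(O_j)\hookrightarrow\sym{n}$, so that the containment $E\subseteq\prod_j E|_{O_j}$ is literally an inclusion of subgroups rather than merely an injection.
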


Notice that the maximal elementary abelian $p$-subgroup occurred in Theorem \ref{T;Maximal subgroup} has $p$-rank $\sum_{i=1}^{\ell}it_{i}$. In particular, any elementary abelian $p$-subgroup of $\sym{n}$ has $p$-rank no more than  $\lfloor\frac{n}{p}\rfloor$.
\begin{lem}\label{L;Maximalsubgroup1}
Let $p=2$ and $2\mid n$. Let $E$ be a maximal elementary abelian $2$-subgroup of $\sym{n}$ with $2$-rank $\frac{n}{2}$. Then $E$ is $\sym{n}$-conjugate to some $K_\ell\times F_\ell$ where
$0\leq\ell\leq\lfloor\frac{n}{4}\rfloor$.
\end{lem}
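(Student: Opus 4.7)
The plan is to invoke Theorem \ref{T;Maximal subgroup} to write $E$ in the standard product form and then use the hypothesis on the $2$-rank to severely constrain the multiplicities $t_i$. Specifically, up to $\sym{n}$-conjugation we may assume
\[
E=\underbrace{R_{1,2}\times\cdots\times R_{1,2}}_{t_1\text{ times}}\times\cdots\times\underbrace{R_{\ell,2}\times\cdots\times R_{\ell,2}}_{t_\ell\text{ times}},
\]
with $n=\sum_{i=0}^{\ell}2^it_i$, $t_0\in\{0,1\}$, and $2$-rank equal to $\sum_{i=1}^{\ell}it_i=\frac{n}{2}$. Since $2\mid n$ and $\sum_{i\ge 1}2^it_i$ is even, we must have $t_0=0$.

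Next, I would combine the two identities $n=\sum_{i=1}^{\ell}2^it_i$ and $\sum_{i=1}^{\ell}it_i=\frac{n}{2}$ to obtain
\[
\sum_{i=1}^{\ell}(2^{i-1}-i)t_i=0.
\]
A direct check shows $2^{i-1}-i=0$ for $i\in\{1,2\}$ and $2^{i-1}-i>0$ for all $i\ge 3$. Hence every $t_i$ with $i\ge 3$ must vanish, so $E$ is $\sym{n}$-conjugate to $R_{1,2}^{\,m}\times R_{2,2}^{\,\ell'}$ for some non-negative integers $m,\ell'$ with $n=2m+4\ell'$, which forces $0\le\ell'\le\lfloor n/4\rfloor$ and $m=\frac{n-4\ell'}{2}$.

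Finally, I would identify $R_{1,2}^{\,m}\times R_{2,2}^{\,\ell'}$ with $K_{\ell'}\times F_{\ell'}$ by writing out the left regular actions explicitly. The group $R_{1,2}$ is generated by a single transposition, so an $\sym{n}$-conjugate of $R_{1,2}^{\,m}$ (acting on the last $2m$ letters $\{4\ell'+1,\ldots,n\}$) is exactly $F_{\ell'}=\langle s_{\ell',1},\ldots,s_{\ell',m}\rangle$. For $R_{2,2}$, labelling the four elements of $C_2\times C_2$ in the order $(e,a,b,ab)$ and considering left multiplication by $a$ and $b$ yields the permutations $(1,2)(3,4)$ and $(1,3)(2,4)$ respectively; applying this on each of the $\ell'$ blocks $\{4i-3,4i-2,4i-1,4i\}$ recovers the generators $k_{i,1},k_{i,2}$, so an $\sym{n}$-conjugate of $R_{2,2}^{\,\ell'}$ is $K_{\ell'}$. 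Writing $\ell=\ell'$ completes the proof.

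The only mildly non-routine step is verifying that $2^{i-1}-i$ is strictly positive for $i\ge 3$, but this follows immediately by induction (or direct computation for $i=3,4$ together with the observation that $2^{i-1}$ at least doubles while $i$ only increments by one). The identification in the last paragraph is purely bookkeeping once the standard form of $R_{m,2}$ is understood.
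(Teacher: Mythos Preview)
Your proof is correct and follows essentially the same route as the paper's: invoke Theorem~\ref{T;Maximal subgroup}, combine the two constraints to get $\sum_i (2^{i-1}-i)t_i=0$, and use $2^{i-1}>i$ for $i\ge 3$ to force $t_i=0$. The only differences are that you make the step $t_0=0$ explicit and spell out the identification of $R_{2,2}^{\ell'}\times R_{1,2}^{m}$ with $K_{\ell'}\times F_{\ell'}$, both of which the paper leaves implicit.
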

\begin{proof}
By Theorem \ref{T;Maximal subgroup}, $E$ is $\sym{n}$-conjugate to an elementary abelian $2$-subgroup $(R_{1,2})^{t_1}\times\cdots\times (R_{m,2})^{t_m}\hookrightarrow
(\sym{2})^{t_1}\times\cdots\times(\sym{2^m})^{t_m}\hookrightarrow\sym{n}$ for some positive integer $m$ and some non-negative integers $t_1,\cdots,t_m$. We have $\sum_{i=1}^m2^it_i=n$ and $\sum_{i=1}^m it_i=\frac{n}{2}$ by the hypotheses. The two equations imply that $\sum_{i=1}^m 2^{i-1}t_i=\frac{n}{2}=\sum_{i=1}^m it_i$. For any integer $k$, note that $k<2^{k-1}$ if $2<k$. We therefore deduce that $t_i=0$ for all $2<i\leq m$. The lemma thus follows.
\end{proof}

\begin{lem}\label{L;sjtp=2}
Let $p=2$ and $2\mid n$. Then
\begin{enumerate}
\item[{\em (i)}] $D_1{\downarrow_{K_{n/4}}}$ has stable generic Jordan type $[1]^2$ when $n\equiv 0 \pmod4$.
\item[{\em (ii)}] $D_1{\downarrow_{E_{n/2}}}$ is generically free.
\end{enumerate}
\end{lem}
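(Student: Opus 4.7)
The plan is to reduce both parts to a single linear-algebra question: whether the fixed vector $\xi = \sum_{i=1}^n \{t_i\}$, which spans $\text{Rad}(S_1) \cong \F$, lies in $(u_\alpha - 1)(\K \otimes_\F S_1)$ at the generic point. In both parts the acting group $E$ (either $K_{n/4}$ or $E_{n/2}$) has $a_0(E)=0$, so Lemma \ref{L;(n-1,1)sjt} gives that $S_1{\downarrow_E}$ has stable generic Jordan type $[1]^{p-1}=[1]^1$ and hence full generic Jordan type $[1]^1[2]^{(n-2)/2}$. Applying Theorem \ref{InsertionRule} to the short exact sequence $0 \to \F\xi \to S_1 \to D_1 \to 0$, with $U = \F\xi$ of Jordan type $[1]$ and $V = S_1{\downarrow_E}$, forces the generic Jordan type of $D_1{\downarrow_E}$ to be either $[2]^{(n-2)/2}$ (Case (i), so $D_1{\downarrow_E}$ is generically free) or $[1]^2[2]^{(n-4)/2}$ (Case (ii), so the stable generic Jordan type is $[1]^2$). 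These two options correspond, respectively, to $\xi \notin (u_\alpha-1)(\K \otimes S_1)$ and $\xi \in (u_\alpha-1)(\K \otimes S_1)$: indeed, since $\xi$ is a fixed vector, it fails to lie in the image of $u_\alpha - 1$ exactly when it spans the unique size-one Jordan block of $V$.

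For part (i), I will produce an explicit preimage. Writing $u_\alpha = 1 + \sum_{i=1}^{n/4}[\beta_i(k_{i,1}-1) + \gamma_i(k_{i,2}-1)]$ with $\beta_i,\gamma_i$ algebraically independent over $\F$, for each orbit $O_i = \{4i-3,4i-2,4i-1,4i\}$ of $K_{n/4}$ the polytabloid $z_i = \{t_{4i-3}\} - \{t_{4i-2}\}$ belongs to $S_1$. A direct characteristic-two calculation, in which the $\beta_i$-contributions from $k_{i,1}$ cancel, yields $(u_\alpha-1)z_i = \gamma_i \sum_{r\in O_i}\{t_r\}$. Since $\gamma_i \in \K^{\times}$, each orbit sum lies in $(u_\alpha-1)(\K\otimes S_1)$; summing over $i$ gives $\xi \in (u_\alpha-1)(\K\otimes S_1)$. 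Hence Case (ii) of Theorem \ref{InsertionRule} applies and $D_1{\downarrow_{K_{n/4}}}$ has stable generic Jordan type $[1]^2$.

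For part (ii), I will show no preimage exists. Using the basis $\{w_i = \{t_1\}+\{t_i\} : 2 \leq i \leq n\}$ of $S_1$ and $u_\alpha = 1 + \sum_{i=1}^{n/2}\alpha_i(s_i-1)$, a direct computation gives $(u_\alpha-1)w_2 = 0$ and, for $i\in\{2k-1,2k\}$ with $k\geq 2$, $(u_\alpha-1)w_i = \alpha_1 w_2 + \alpha_k(w_{2k-1}+w_{2k})$. Writing $\xi = w_2 + \sum_{k=2}^{n/2}(w_{2k-1}+w_{2k})$ and equating with $\sum_{k=2}^{n/2} c_k[\alpha_1 w_2 + \alpha_k(w_{2k-1}+w_{2k})]$ forces $c_k = \alpha_k^{-1}$ and then the scalar identity $\alpha_1\sum_{k=2}^{n/2}\alpha_k^{-1} = 1$, which is a nontrivial algebraic condition on the $\alpha_i$ and so fails in $\K = \F(\alpha_1,\ldots,\alpha_{n/2})$. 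Therefore $\xi \notin (u_\alpha-1)(\K\otimes S_1)$, Case (i) of Theorem \ref{InsertionRule} applies, and $D_1{\downarrow_{E_{n/2}}}$ is generically free.

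The main obstacle is executing the two characteristic-two computations cleanly. In part (i) the key point is choosing the polytabloid $\{t_{4i-3}\} - \{t_{4i-2}\}$ (rather than some other polytabloid supported on $O_i$), so that the contributions from $k_{i,1}$ cancel and leave only the clean $\gamma_i$-multiple of the orbit sum; in part (ii) one must correctly identify the $(n/2-1)$-dimensional image of $u_\alpha - 1$ on $S_1$ and verify that $\xi$ lies outside it. Once Theorem \ref{InsertionRule} reduces both parts to this binary check, the remaining work is routine.
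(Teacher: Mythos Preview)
Your argument is correct and takes a genuinely different route from the paper.

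The paper handles the two parts by completely different methods. For (i) it writes down the full $(n-2)\times(n-2)$ block matrix of $u_\alpha+1$ on $D_1$ with respect to the basis $\mathcal{B}_2^n$ and computes its rank by hand. For (ii) it argues by contradiction through the rank-variety/complexity machinery: if $D_1{\downarrow_{E_{n/2}}}$ were not generically free then $\mathrm{V}_{E_{n/2}}^{\#}(D_1)=\mathrm{V}_{E_{n/2}}^{\#}(\F)$, and restricting further to $E_{(n/2)-1}\subset\sym{n-1}$ together with Sheth's isomorphism $D_1{\downarrow_{\sym{n-1}}}\cong D^{(n-2,1)}$ would force $c_{\sym{n-1}}(D^{(n-2,1)})\geq \tfrac{n}{2}-1$, contradicting the fact that $(n-2,1)$ has $2$-weight $\tfrac{n}{2}-2$.

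Your approach unifies both parts by working directly with the short exact sequence $0\to\F\xi\to S_1\to D_1\to 0$ at the generic point and reducing the question to whether the fixed vector $\xi$ lies in the image of $u_\alpha-1$ on $\K\otimes S_1$. The explicit preimage $\sum_i\gamma_i^{-1}z_i$ in part (i) and the impossible scalar identity $\alpha_1\sum_{k\geq 2}\alpha_k^{-1}=1$ in part (ii) are both clean and correct. This is more conceptual and avoids the block-matrix bookkeeping of the paper's (i); on the other hand, the paper's argument for (ii) is slicker in that it never touches $S_1$ at all, extracting the result purely from complexity bounds and branching. Your method has the further advantage that the same template applies uniformly to both subgroups, whereas the paper's two arguments have nothing in common.

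One small point worth making explicit in a write-up: the dichotomy ``$\xi\in\operatorname{Im}(u_\alpha-1)$ versus $\xi\notin\operatorname{Im}(u_\alpha-1)$ corresponds to quotient type $[1]^2[2]^{(n-4)/2}$ versus $[2]^{(n-2)/2}$'' is really a direct linear-algebra fact (since $\ker(u_\alpha-1)/\operatorname{Im}(u_\alpha-1)$ on $S_1$ is one-dimensional) and does not actually require the full Theorem~\ref{InsertionRule}; you could state it that way and drop the appeal to the theorem.
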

\begin{proof}
Let $K$ and $E$ be the elementary abelian $2$-subgroups $K_{n/4}$ and $E_{n/2}$ of $\sym{n}$ respectively. For (i), let $m:=\frac{n}{4}$ and $\cup_{i=1}^{2m}\{\alpha_i\}$ be a set of indeterminates. Let $\K:=\F(\alpha_1,\cdots,\alpha_{n/2})$ and $u_\alpha:=
1+\sum_{i=1}^m(\alpha_{2i-1}(k_{i,1}+1)+\alpha_{2i}(k_{i,2}+1))$. Let $N$ be the block matrix representing $u_\alpha+1$ on $D_1$ with respect to $\mathcal{B}_2^n$. When $12\leq n$, we have
$$N=(B_1,\ B(1,\alpha_3, \alpha_4),\ B(5,\alpha_5,\alpha_6),\cdots,B(n-11,\alpha_{(n/2)-3},\alpha_{(n/2)-2}),\ B_2).$$
Here $B(a,\beta,\gamma)$ denotes the $(n-2)\times4$ $\K$-matrix
\begin{equation*}
\begin{pmatrix}
\alpha_1 & \alpha_1 & \alpha_1 & \alpha_1\\
\alpha_2 & \alpha_2 & \alpha_2 & \alpha_2\\
0 & 0 & 0 & 0\\
\vdots & \vdots & \vdots & \vdots\\
0 & 0 & 0 & 0\\
\beta+\gamma & \beta & \gamma & 0\\
\beta & \beta+\gamma & 0 & \gamma\\
\gamma & 0 & \beta+\gamma& \beta\\
0 & \gamma & \beta & \beta+\gamma\\
\vdots & \vdots & \vdots & \vdots\\
0 & 0 & 0 & 0 \end{pmatrix},
\end{equation*}
where the integers $a$, $b$ have relations $a=4b+1$, $0\leq b\leq \frac{n-12}{4}$ and the symbols $\beta$, $\gamma$ occur from row $a+3$ to row $a+6$. The left two $(n-2)\times 3$ $\K$-matrices have definitions

\begin{equation*}
B_1:=\begin{pmatrix}
\alpha_2 & \alpha_1 & \alpha_1 +\alpha_2\\
\alpha_2 & \alpha_1 & \alpha_1 +\alpha_2\\
\alpha_2 & \alpha_1 & \alpha_1 +\alpha_2\\
0 & 0 & 0 \\
\vdots & \vdots & \vdots \\
0 & 0 & 0 \end{pmatrix},\end{equation*}
\begin{equation*}
B_2:=\begin{pmatrix}
\alpha_1 & \alpha_1+\alpha_{n/2} & \alpha_1+\alpha_{(n/2)-1}\\
\alpha_2 & \alpha_2+\alpha_{n/2} & \alpha_2+\alpha_{(n/2)-1}\\
0 & \alpha_{n/2} & \alpha_{(n/2)-1}\\
\vdots & \vdots & \vdots\\
0 & \alpha_{n/2} & \alpha_{(n/2)-1}\\
\alpha_{(n/2)-1}+\alpha_{n/2} & \alpha_{(n/2)-1}+\alpha_{n/2} & \alpha_{(n/2)-1}+\alpha_{n/2}\\
\alpha_{(n/2)-1} & \alpha_{(n/2)-1} & \alpha_{(n/2)-1}\\
\alpha_{n/2} & \alpha_{n/2} & \alpha_{n/2}
\end{pmatrix}.
\end{equation*}
When $n=4\ \text{or}\ 8$, the block matrix $N$ degenerates to be zero matrix or $(B_1,B_2)$ respectively. As stable generic Jordan type of $D_1{\downarrow_K}$ is independent of the choice of generators of $K$, we get (i) by counting the rank of $N$.

For (ii), suppose $D_1{\downarrow_{E}}$ is not generically free, we thus have $\mathrm{V}_{E}^{\#}(D_1{\downarrow_E})=\mathrm{V}_{E}^{\#}(\F)$ by Proposition \ref{P;sjt} (vi). Note that $D_1{\downarrow_{\sym{n-1}}}\cong D^{(n-2,1)}$ by \cite[Theorem 3.1]{Sheth}. Take $\tilde{E}$ to be $E_{(n/2)-1}$ and notice that $\tilde{E}$ is an elementary abelian $2$-subgroup of $\sym{n-1}\cap E$. We have $\mathrm{V}_{\tilde{E}}^{\#}(D^{(n-2,1)}{\downarrow_{\tilde{E}}})=\mathrm{V}_{\tilde{E}}^{\#}(D_1{\downarrow_{\tilde{E}}})=
\mathrm{V}_{\tilde{E}}^{\#}(\F)$ by the definition of rank variety of a module. It implies the following calculation:
$$ \frac{n}{2}-1=\dim\mathrm{V}_{\tilde{E}}^{\#}(\F)=\dim\mathrm{V}_{\tilde{E}}^{\#}(D^{(n-2,1)}{\downarrow_{\tilde{E}}})\leq c_{\sym{n-1}}(D^{(n-2,1)})\leq
\frac{n}{2}-2.$$
The first inequality of the calculation comes from the definition of complexity of a module. The second one is clear since the partition $(n-2,1)$ has $2$-weight $\frac{n}{2}-2$. The calculation shows an obvious contradiction. So $D_1{\downarrow_{E}}$ is generically free.
\end{proof}

\begin{lem}\label{L;nequiv4mod2}
$c_{\sym{n}}(D_1)=\frac{n}{2}-1$ when $p=2$ and $n\equiv 2 \pmod 4$.
\end{lem}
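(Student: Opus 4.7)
The plan is to establish the two inequalities $c_{\sym{n}}(D_1)\le n/2-1$ and $c_{\sym{n}}(D_1)\ge n/2-1$ separately. Writing $n=4k+2$, a direct rim $2$-hook computation shows that the $2$-weight of $(n-1,1)$ equals $w=n/2$, so the defect-group bound on complexity gives $c_{\sym{n}}(D_1)\le n/2$; the rest of the argument sharpens this by one and then attains the sharpened value.

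For the upper bound I would show that $D_1{\downarrow_E}$ is generically free for every maximal elementary abelian $2$-subgroup $E$ of $\sym{n}$ of $2$-rank $n/2$. By Lemma~\ref{L;Maximalsubgroup1} each such $E$ is $\sym{n}$-conjugate to some $K_\ell\times F_\ell$ with $0\le\ell\le k$; the case $\ell=0$ is Lemma~\ref{L;sjtp=2}(ii). For $1\le\ell\le k$ I would adapt that proof: the transposition $(n-1,n)$ always appears as a generator of $F_\ell$, so removing it produces a $2$-rank $n/2-1$ subgroup $\widetilde E\le K_\ell\times F_\ell$ contained in $\sym{n-1}$. If $D_1{\downarrow_{K_\ell\times F_\ell}}$ were not generically free, Proposition~\ref{P;sjt}(vi) would force $\mathrm{V}_{\widetilde E}^{\#}(D_1{\downarrow_{\widetilde E}})=\mathrm{V}_{\widetilde E}^{\#}(\F)$; together with $D_1{\downarrow_{\sym{n-1}}}\cong D^{(n-2,1)}$ this would yield $c_{\sym{n-1}}(D^{(n-2,1)})\ge 2k$. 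But $(n-2,1)=(4k,1)$ has $2$-weight only $2k-1$, contradicting the defect-group bound for $D^{(n-2,1)}$.

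For the lower bound I would directly compute $\mathrm{V}_{E_{n/2}}^{\#}(D_1{\downarrow_{E_{n/2}}})$ in the basis $\mathcal{B}_2^n$. Using the identity $e_n=\sum_{i=2}^{n-1}e_i$ that holds in $D_1$, the matrix of $u_\alpha-1$ has an explicit shape: the column indexed by $e_2$ vanishes; for each $2\le j\le n/2-1$ the columns indexed by $e_{2j-1}$ and $e_{2j}$ both equal $\alpha_1 e_2+\alpha_j(e_{2j-1}+e_{2j})$; and the column indexed by $e_{n-1}$ equals $(\alpha_1+\alpha_{n/2})e_2+\alpha_{n/2}\sum_{i=3}^{n-2}e_i$. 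A rank count then shows that the generic rank is exactly $n/2-1$, and that it drops precisely on the zero locus of a polynomial $P(\alpha)$ in which the monomial $\alpha_1\alpha_2\cdots\alpha_{n/2-1}$ has coefficient $1$; consequently $P$ is a non-zero polynomial and $\mathrm{V}_{E_{n/2}}^{\#}(D_1{\downarrow_{E_{n/2}}})=Z(P)$ is a hypersurface of dimension $n/2-1$ in $\F^{n/2}$. The hard part of the argument will be this last step: even though $D_1{\downarrow_{E_{n/2}}}$ is already generically free, one must pin down the rank-deficient locus \emph{exactly}, which requires tracking how the single generator $(n-1,n)$ mixes the basis globally through the relation $e_n=\sum_{i=2}^{n-1}e_i$. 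It is the resulting ``long'' column that prevents the rank variety from collapsing to a lower-dimensional subvariety.
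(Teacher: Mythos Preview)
Your argument is correct, but the route differs from the paper's in a noteworthy way.

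For the \emph{upper bound} the paper does not argue generic freeness abstractly: it carries out explicit block--matrix computations of $u_{\alpha,\ell}+1$ on $\mathcal{B}_2^n$ for \emph{every} $K_\ell\times F_\ell$, splitting into three cases ($\ell=0$, $\ell=\tfrac{n-2}{4}$, and $0<\ell<\tfrac{n-2}{4}$), and reads off the rank variety (hence its dimension) directly in each case.  Your approach instead generalises the restriction--to--$\sym{n-1}$ trick from Lemma~\ref{L;sjtp=2}(ii) to all $\ell$: because $n\equiv 2\pmod 4$ forces $\ell<\tfrac{n}{4}$, the transposition $(n-1,n)$ is always among the generators of $F_\ell$, so dropping it yields $\widetilde E\le\sym{n-1}$ of rank $\tfrac{n}{2}-1$, and non--generic--freeness over $K_\ell\times F_\ell$ would push $c_{\sym{n-1}}(D^{(n-2,1)})$ above its $2$-weight $2k-1$.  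This is cleaner than the paper's Cases~2--3 and avoids two pages of block matrices; on the other hand the paper's explicit calculation actually gives more, namely the precise rank varieties $\mathrm{V}^{\#}_{K_\ell\times F_\ell}(D_1)$ themselves (e.g.\ a union of coordinate subspaces when $\ell=\tfrac{n-2}{4}$), not just their dimensions.

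For the \emph{lower bound} your approach and the paper's Case~1 coincide.  The reduction you sketch is exactly right: in the basis $e_2,f_2,\ldots,f_{n/2-1}$ (with $f_j=e_{2j-1}+e_{2j}$) the image of $u_\alpha+1$ is contained in a fixed $(\tfrac{n}{2}-1)$-dimensional subspace, so the rank variety is cut out by a single $(\tfrac{n}{2}-1)\times(\tfrac{n}{2}-1)$ determinant, and the monomial $\alpha_1\alpha_2\cdots\alpha_{n/2-1}$ witnesses its non-vanishing.  The paper records this determinant explicitly as $\sum_i\prod_{j\ne i}\gamma_j$, but for the dimension count your observation suffices.
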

\begin{proof}
Let $\alpha:=(\alpha_1,\cdots,\alpha_{n/2})$ be a non-zero point of $\F^{\frac{n}{2}}$. Let $\ell$, $m$ be two non-negative integers such that $m=\frac{n-4\ell}{2}$. When $0\leq \ell\leq \frac{n-2}{4}$, let $u_{\alpha,\ell}$ be the element $1+\sum_{i=1}^\ell(\alpha_{2i-1}(k_{i,1}+1)+\alpha_{2i}(k_{i,2}+1))+\sum_{i=1}^m\alpha_{2\ell+i}(s_{\ell,i}+1)$. Let $N(\ell)$ be the corresponding matrix representing $u_{\alpha,\ell}+1$ on $D_1$ with respect to $\mathcal{B}_2^n$. We assign $E$ to be $K_\ell\times F_\ell$ and determine the corresponding $\mathrm{V}_{E}^{\#}(D_1{\downarrow_E})$ with respect to the $u_{\alpha,\ell}$ in the following cases.
\begin{enumerate}[\text{Case} 1:]
\item $\ell=0$
\end{enumerate}
In the case, when $10\leq n$, we have
$$ N(\ell)=(C_1,\ C(1,\alpha_3,\alpha_4),\ C(5,\alpha_5,\alpha_6),\cdots,C(n-9,\alpha_{(n/2)-2},\alpha_{(n/2)-1}),\ C_2).
$$
Here $C(a,\beta,\gamma)$ denotes the $(n-2)\times4$ $\F$-matrix
\begin{equation*}
\begin{pmatrix}
\alpha_1 & \alpha_1 & \alpha_1 & \alpha_1\\
0 & 0& 0& 0\\
\vdots & \vdots & \vdots & \vdots\\
0 & 0 & 0 & 0\\
\beta & \beta & 0 & 0\\
\beta & \beta & 0 & 0\\
0& 0 & \gamma& \gamma\\
0& 0 & \gamma& \gamma\\
\vdots & \vdots & \vdots & \vdots\\
0 & 0 & 0 & 0 \end{pmatrix},
\end{equation*}
where the integers $a$, $b$ have relations $a=4b+1$, $0\leq b\leq \frac{n-10}{4}$ and the symbols $\beta$, $\gamma$ occur from row $a+3$ to row $a+6$. The matrices $C_1$ and $C_2$ are the $(n-2)\times 3$ and $(n-2)\times 1$ $\F$-matrices where
\begin{equation*}
C_1:=\begin{pmatrix}
0 & \alpha_1 & \alpha_1 \\
0 & \alpha_2 & \alpha_2\\
0 & \alpha_2 & \alpha_2\\
\vdots & \vdots & \vdots \\
0 & 0 & 0 \end{pmatrix},
C_2:=\begin{pmatrix}
\alpha_1+\alpha_{n/2}\\
\alpha_{n/2}\\
\vdots\\
\alpha_{n/2}\\
0
\end{pmatrix}.
\end{equation*}
When $n=6$, the block matrix $N(\ell)$ degenerates to be $(C_1,C_2)$. By calculation of the rank of $N(\ell)$, one gets
$$\mathrm{V}_{E}^{\#}(D_1{\downarrow_E})=\{(\gamma_1,\cdots,\gamma_{n/2})\in \F^{\frac{n}{2}}:\ \sum_{i=1}^{\frac{n}{2}}\bar{\gamma_i}=0,\ \bar{\gamma_i}:=\prod_{\substack{j=1\\
                  j\neq i\\
                  } }^{\frac{n}{2}}\gamma_j\}.$$
Therefore, we have $\dim\mathrm{V}_{E}^{\#}(D_1{\downarrow_E})=\frac{n}{2}-1$.
\begin{enumerate}[\text{Case} 2:]
\item $\ell=\frac{n-2}{4}$
\end{enumerate}
In the case, when $10\leq n$, we get
$$N(\ell)=(B_1,\ B(1,\alpha_3,\alpha_4),\ B(5,\alpha_5,\alpha_6),\cdots,B(n-9,\alpha_{(n/2)-2},\alpha_{(n/2)-1}),\ \bar{B_2}).$$
The $B_1$, $B(a,\beta,\gamma)$, defined in the proof of Lemma \ref{L;sjtp=2} (i), are viewed as $\F$-matrices by regarding their entries $\alpha_1$, $\alpha_2$ as corresponding components of $\alpha$. Moreover, the integers $a$ and $b$ have relations $a=4b+1$, $0\leq b\leq \frac{n-10}{4}$. The matrix $\bar{B_2}$ is the $(n-2)\times 1$ $\F$-matrix
\begin{equation*}
\begin{pmatrix}
\alpha_1+\alpha_{n/2}\\
\alpha_2+\alpha_{n/2}\\
\alpha_{n/2}\\
\vdots\\
\alpha_{n/2}\\
0
\end{pmatrix}.
\end{equation*}
When $n=6$, the block matrix $N(\ell)$ degenerates to be $(B_1,\bar{B_2})$. By calculation of the rank of $N(\ell)$, one obtains
$$\mathrm{V}_{E}^{\#}(D_1{\downarrow_E})=\bigcup_{\substack{s=1\\
                  2\nmid s\\
                  }}^{\frac{n}{2}-2}\{(\gamma_1,\cdots,\gamma_{n/2})\in\F^{\frac{n}{2}}:\ \gamma_s=\gamma_{s+1}=0\}.$$
Therefore, we get $\dim \mathrm{V}_{E}^{\#}(D_1{\downarrow_E})=\frac{n}{2}-2$.
\begin{enumerate}[\text{Case} 3:]
\item $0<\ell<\frac{n-2}{4}$
\end{enumerate}
In the case, when $1<\ell$, we obtain $N(\ell)=(B_1,\ B,\ D,\ \bar{B_2})$, where
\begin{align*}
& B:=(B(1,\alpha_3,\alpha_4),B(5,\alpha_5,\alpha_6),\cdots,B(4\ell-7,\alpha_{2\ell-1},\alpha_{2\ell})),\\ &D:=(D(4\ell-3,\alpha_{2\ell+1},\alpha_{2\ell+2}),\cdots,D(n-9,\alpha_{(n/2)-2},\alpha_{(n/2)-1})).
\end{align*}
Here $D(a,\beta,\gamma)$ denotes the $(n-2)\times4$ $\F$-matrix
\begin{equation*}
\begin{pmatrix}
\alpha_1 & \alpha_1 & \alpha_1 & \alpha_1\\
\alpha_2 & \alpha_2 & \alpha_2 & \alpha_2\\
0 & 0& 0& 0\\
\vdots & \vdots & \vdots & \vdots\\
0 & 0 & 0 & 0\\
\beta & \beta & 0 & 0\\
\beta & \beta & 0 & 0\\
0& 0 & \gamma& \gamma\\
0& 0 & \gamma& \gamma\\
\vdots & \vdots & \vdots & \vdots\\
0 & 0 & 0 & 0 \end{pmatrix},
\end{equation*}
where the integers $a$, $b$ have relations $a=4b+1$, $\ell-1\leq b\leq \frac{n-10}{4}$ and the symbols $\beta$, $\gamma$ occur from row $a+3$ to row $a+6$. The definitions of other components of $N(\ell)$ are from Case $2$. Given a component $B(a',\beta,\gamma)$ of $B$, the integers $a'$, $b'$ have relations $a'=4b'+1$ and $0\leq b'\leq \ell-2$. When $\ell=1$, the block matrix $N(\ell)$ degenerates to be $(B_1,D,\bar{B_2})$. Define two sets

\begin{align*}
& \mathrm{V}_1:=\bigcup_{\substack{s=1\\
                  2\nmid s\\
                  }}^{\frac{n}{2}-2}\{(\gamma_1,\cdots,\gamma_{n/2})\in\F^{\frac{n}{2}}:\ \gamma_s=\gamma_{s+1}=0\},\\
& \mathrm{V}_2(\ell):=\{(\gamma_1,\cdots,\gamma_{n/2})\in\F^{\frac{n}{2}}:\ \sum_{i=2\ell+1}^{\frac{n}{2}}\bar{\gamma_i}=0,\ \bar{\gamma_i}:={\prod_{\substack{j=2\ell+1\\
                  j\neq i\\
                  }}^{\frac{n}{2}}\gamma_j}\}.
\end{align*}
According to calculation of the rank of $N(\ell)$, we know that $\mathrm{V}_{E}^{\#}(D_1{\downarrow_E})=V_1\cup V_2(\ell)$. Therefore, we have $\dim\mathrm{V}_{E}^{\#}(D_1{\downarrow_E})=\frac{n}{2}-1$. The lemma now follows by combining all cases discussed above, Lemma \ref{L;Maximalsubgroup1} and the definition of complexity of a module.
\end{proof}
\begin{prop}\label{P;(n-1,1)2}
Let $(n-1,1)\in\mathfrak{Reg}_p(n)$ having $p$-weight $w$. We have
\[c_{\sym{n}}(D_1)=\begin{cases}
w-1, & \text{if}\ p=2,\ n\equiv 2\pmod 4,\\
w,  &\text{otherwise}.\\
\end{cases} \]
\end{prop}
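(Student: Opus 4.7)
The upper bound $c_{\sym{n}}(D_1)\le w$ is automatic, since $D_1$ lies in a $p$-block whose defect groups have $p$-rank $w$. So my job is to produce a matching (or one-less, in the exceptional case) lower bound, and I would split into cases according to $p$ and the residue of $n$ modulo $4$.

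\textbf{Case $p$ odd.} I would first apply Theorem \ref{T;James} to analyze the composition factors of $S_1=S^{(n-1,1)}$: we have $(S_1:D^{(n)})=\psi(n,1,0)$, which equals $1$ exactly when $p\mid n$, and $(S_1:D_1)=\psi(n,1,1)=1$ always. Hence, when $p\nmid n$, $S_1\cong D_1$, and Lemma \ref{L;YongModules}(iii) immediately gives $c_{\sym{n}}(D_1)=w$. When $p\mid n$, instead, $\mathrm{Rad}(S_1)\cong\F$, so the short exact sequence $0\to\F\to S_1\to D_1\to 0$ yields $\dim_\F D_1=(n-1)-1=n-2$, which is coprime to $p$ since $p\ge 3$ divides $n$ but not $2$. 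The attainment criterion recorded in Section~2 (upper bound is attained when $p\nmid\dim_\F M$) then forces $c_{\sym{n}}(D_1)=w$.

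\textbf{Case $p=2$.} A short rim-hook calculation shows that the $2$-weight of $(n-1,1)$ is $n/2$ when $n$ is even and $(n-3)/2$ when $n$ is odd; I would include this verification at the start of the case. Now the three subcases:
\begin{compactitem}
\item If $n\equiv 2\pmod 4$, Lemma \ref{L;nequiv4mod2} gives $c_{\sym{n}}(D_1)=n/2-1=w-1$ directly, matching the exceptional branch of the formula.
\item If $n\equiv 0\pmod 4$, Lemma \ref{L;sjtp=2}(i) states that $D_1{\downarrow_{K_{n/4}}}$ has stable generic Jordan type $[1]^2$, so it is not generically free. Since $K_{n/4}$ has $2$-rank $n/2=w$, Proposition \ref{P;sjt}(vii) yields $c_{\sym{n}}(D_1)\ge w$, hence equality.
\item If $n$ is odd, then $2\nmid n$, so by the Theorem \ref{T;James} computation above $S_1\cong D_1$. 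I would take $E:=E_{(n-3)/2}$, which is an elementary abelian $2$-subgroup of $\sym{n}$ of $2$-rank $(n-3)/2=w$, and whose natural action on $[n]$ has exactly $a_0(E)=3$ fixed points. By Lemma \ref{L;(n-1,1)sjt}, $D_1{\downarrow_E}=S_1{\downarrow_E}$ has stable generic Jordan type $[1]^{3-1}=[1]^2\ne 0$, so it is not generically free, and Proposition \ref{P;sjt}(vii) gives $c_{\sym{n}}(D_1)\ge w$, hence equality.
\end{compactitem}

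\textbf{Where the difficulty lies.} The genuinely non-routine input — the explicit matrix computation showing $D_1{\downarrow_{K_{n/4}}}$ has stable type $[1]^2$, and the rank-variety/Jordan-type analysis showing $c_{\sym{n}}(D_1)=n/2-1$ when $n\equiv 2\pmod 4$ — has already been carried out in Lemmas \ref{L;sjtp=2} and \ref{L;nequiv4mod2}, together with Wheeler's generic Jordan type machinery assembled in Proposition \ref{P;sjt} and Lemma \ref{L;(n-1,1)sjt}. At the level of the proposition itself the argument is essentially bookkeeping; the only mild subtlety is the $p=2$, $n\equiv 2\pmod 4$ case, where $4\mid\dim_\F D_1$ so the dimension criterion fails and one genuinely needs Lemma \ref{L;nequiv4mod2} to see that the complexity drops strictly below $w$.
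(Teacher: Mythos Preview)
Your proof is correct and follows essentially the same route as the paper --- the same case split and the same key inputs (Lemma~\ref{L;YongModules}(iii) and the dimension criterion for $p$ odd; Lemmas~\ref{L;sjtp=2}(i) and~\ref{L;nequiv4mod2} together with Proposition~\ref{P;sjt}(vii) for $p=2$ and $n$ even). The one place you add something is the subcase $p=2$, $n$ odd: the paper folds this into the ``$p\nmid n$'' case and cites Lemma~\ref{L;YongModules}(iii), but that lemma is stated only for $p>2$; your direct argument via $E_{(n-3)/2}$ and Lemma~\ref{L;(n-1,1)sjt} cleanly covers this residual case.
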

\begin{proof}
The case $p\nmid n$ is trivial since we have $S_1\cong D_1$ by Theorem \ref{T;James}. The result follows by Lemma \ref{L;YongModules} (iii). When $p\neq2$ and $p\mid n$, we have $\dim_\F D_1\equiv p-2 \pmod p$ which implies that $c_{\sym{n}}(D_1)=w$. For the case where $p=2$ and $2\mid n$, note that $w=\frac{n}{2}$. We also get desired result by Lemma \ref{L;sjtp=2} (i), Proposition \ref{P;sjt} (vii) and Lemma \ref{L;nequiv4mod2}. The proof of the proposition is now complete.
\end{proof}

\begin{prop}\label{P;CD(n-2,2)}
Let $(n-2,2)\in\mathfrak{Reg}_p(n)$ having $p$-weight $w$. If $n\neq 6$, we have
\begin{align*}
c_{\sym{n}}(D_2)=\begin{cases}
w-1, &\text{if}\ p=2,\ n=5,\\
w, &\text{otherwise}.\\
\end{cases}
\end{align*}
\end{prop}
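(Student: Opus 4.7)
The plan is to split into the cases $p > 2$ and $p = 2$ and treat each with different tools. For $p > 2$, Theorem \ref{T;James} tells us that $(S_2 : D^{(n-1,1)}) = 1$ precisely when $n \equiv 2 \pmod p$ and $(S_2 : \F) = 1$ precisely when $n \equiv 1 \pmod p$ (subject to the relevant partitions being $p$-regular), and for odd $p$ these two congruences are mutually exclusive. When neither holds, $S_2 \cong D_2$ and Lemma \ref{L;YongModules} (iii) gives $c_{\sym{n}}(D_2) = w$ at once. Otherwise I would compute $\dim D_2$ modulo $p$ from equation $(2.4)$ and the dimensions of the relevant composition factors; in both subcases a routine calculation yields $\dim D_2 \equiv -2 \pmod p$, which is nonzero for odd $p$. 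The general fact that complexity attains the $p$-rank of the defect group whenever $p \nmid \dim_\F D_2$ then gives $c_{\sym{n}}(D_2) = w$.

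For $p = 2$ and $n = 5$, I would treat $D^{(3,2)}$ directly. It has dimension $4$, and since the $2$-rank of $\sym{5}$ is $w = 2$, the maximal elementary abelian $2$-subgroups are Klein four-groups. Following the matrix-rank approach of Lemma \ref{L;sjtp=2}, I would compute the rank of the generic action $u_\alpha - 1$ on $D^{(3,2)}$ restricted to each such Klein four group, showing that $D^{(3,2)} \downarrow_E$ is generically free for every rank-$2$ elementary abelian $E$ while remaining not generically free on a suitable rank-$1$ subgroup. This pins down $c_{\sym{5}}(D^{(3,2)}) = 1 = w - 1$.

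For $p = 2$ and $n \geq 7$, I would subdivide by $n \pmod 4$ and in each subcase choose a maximal-rank elementary abelian subgroup $E$, namely $E_{(n-1)/2}$ for $n$ odd and $E_{n/2}$ for $n$ even, of rank exactly $w$. When $n \equiv 3 \pmod 4$, Theorem \ref{T;James} gives $\psi(n,2,1) = \psi(n,2,0) = 0$, so $S_2 \cong D_2$; Lemma \ref{L;sjt(n-2,2)2} with $a_0 = 1$, $a_1 = (n-1)/2$ then supplies stable generic Jordan type $[1]^{w-1}$ for $D_2 \downarrow_E$, non-trivial once $n \geq 7$, and Proposition \ref{P;sjt} (vii) closes the case. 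When $n \equiv 1 \pmod 4$, there is a short exact sequence $0 \to \F \to S_2 \to D_2 \to 0$; combining with the decomposition $M^{(n-2,2)} = Y^{(n-2,2)} \oplus Y^{(n-1,1)}$ from Theorem \ref{T;Henke} and a block-idempotent argument in the spirit of Lemma \ref{R;Spechtfiltration} to produce the short exact sequence $0 \to S_2 \to Y^{(n-2,2)} \to \F \to 0$, I would apply Proposition \ref{P;sjt} (iv) to track the stable type of $D_2 \downarrow_E$ through the filtration. The even cases $n \equiv 0$ and $n \equiv 2 \pmod 4$ admit analogous treatments, using Lemmas \ref{L;sjtp=2} (ii) and \ref{L;p=2,sjt} to compute the stable types of $D^{(n-1,1)} \downarrow_{E_{n/2}}$ and $S_2 \downarrow_{E_{n/2}}$, and then unwinding the short exact sequence $0 \to D^{(n-1,1)} \to S_2 \to D_2 \to 0$ (when $n \equiv 0$) or a three-step chain involving both $\F$ and $D^{(n-1,1)}$ (when $n \equiv 2$).

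The main obstacle will be the $p = 2$, $n \equiv 2 \pmod 4$ subcase, where $S_2$ has three pairwise non-isomorphic composition factors $\F$, $D^{(n-1,1)}$ and $D_2$, so one must chain together two short exact sequences and apply Proposition \ref{P;sjt} (iv) at each step. This requires ensuring that at each step one of the three modules involved is generically free on $E_{n/2}$, which should be achievable by working with $Y^{(n-2,2)}$ and its filtration by Specht modules, coupled with the Young-module data from Theorem \ref{T;Henke}. A secondary nuisance is that Lemma \ref{R;Spechtfiltration} does not literally apply when no intermediate partition exists in the same block strictly between $(n-2,2)$ and $(n)$, but the block-idempotent style of proof of that lemma adapts directly to the situation.
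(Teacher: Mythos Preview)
Your plan is correct and tracks the paper's proof closely; the case split, the odd-$p$ dimension count, and the use of Proposition~\ref{P;sjt}(iv) on short exact sequences all match. The one substantive difference is that for $p=2$ you repeatedly reach for Young modules and block idempotents where the paper stays entirely inside $S_2$ and its radical. For $n\equiv 1\pmod 4$ the paper simply reads off the stable generic Jordan type $[1]^{(n-3)/2}$ of $S_2{\downarrow_E}$ from Lemma~\ref{L;sjt(n-2,2)2} and applies Proposition~\ref{P;sjt}(iv)(c) to $0\to\F\to S_2\to D_2\to 0$; no $Y^{(n-2,2)}$ is needed. For $n\equiv 2\pmod 4$ the paper again avoids Young modules: setting $R=\mathrm{Rad}(S_2)$, one has $0\to R\to S_2\to D_2\to 0$ and $0\to D_1\to R\to\F\to 0$ (the latter from \cite{Peel}). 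Since $D_1{\downarrow_{E_{n/2}}}$ is generically free by Lemma~\ref{L;sjtp=2}(ii), Proposition~\ref{P;sjt}(iv)(a) gives $R{\downarrow_E}$ stable type $[1]^1$; were $D_2{\downarrow_E}$ generically free, Proposition~\ref{P;sjt}(iv)(c) would force $S_2{\downarrow_E}$ to have stable type $[1]^1$ as well, contradicting Lemma~\ref{L;p=2,sjt} for $n\neq 6$. So your ``main obstacle'' dissolves once you use Lemma~\ref{L;sjt(n-2,2)2} and Lemma~\ref{L;p=2,sjt} directly rather than routing through $Y^{(n-2,2)}$. Your approach would still succeed, but it is longer and the extra filtration bookkeeping buys nothing. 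For $n=5$ the paper simply cites \cite{Uno} instead of computing.
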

\begin{proof}
When $2<p$, by Theorem \ref{T;James}, we have
\begin{equation}
S_2\sim\begin{cases}
D_2, &\text{if}\ n\not\equiv 1,\ 2\ \ \pmod p,\\
D_2+D^{(n)}, &\text{if}\ n\equiv 1\ \ \ \ \ \ \pmod p,\\
D_2+D_1, &\text{if}\ n\equiv 2\ \ \ \ \ \ \pmod p.
\end{cases}
\end{equation}
The situation $n\not\equiv1,\ 2 \pmod p$ is trivial by Lemma \ref{L;YongModules} (iii). By $(3.2)$ and $(2.4)$, one gets $\dim_\F D_2\equiv p-2\pmod p$ when $n\equiv 1\ \text{or}\ 2\pmod p$, which forces $c_{\sym{n}}(D_2)=w$.

We deal with the case $p=2$. When $n\equiv 1 \pmod 4$ and $5<n$, take $E$ to be $E_{(n-1)/2}$ and note that $w=\frac{n-1}{2}$. By Theorem \ref{T;James} and Lemma \ref{L;sjt(n-2,2)2}, we have $S_2\sim D_2+D^{(n)}$ and $S_2{\downarrow_E}$ has stable generic Jordan type $[1]^\frac{n-3}{2}$. As $1<\frac{n-3}{2}$, the two facts force that $D_2{\downarrow_E}$ is not generically free by Proposition \ref{P;sjt} (iv) (c). We thus get that $c_{\sym{n}}(D_2)=w$ by Proposition \ref{P;sjt} (vii). For the left case $p=2$ and $n=5$, we have $c_{\sym{5}}(D^{(3,2)})=1$ by \cite[Example 1]{Uno}. When $n\equiv 3 \pmod 4$, we have $S_2\cong D_2$ by Theorem \ref{T;James}. It is trivial by Lemma \ref{L;YongModules} (iii). When $2\mid n$, assign $E$ to be $E_{n/2}$ and note that $w=\frac{n}{2}$. When $n\equiv 0\pmod 4$, by Theorem \ref{T;James}, we have $S_2\sim D_2+D_1$. Notice that $D_1{\downarrow_E}$ is generically free by Lemma \ref{L;sjtp=2} (ii). It implies that both $D_2{\downarrow_E}$ and $S_2{\downarrow_E}$ have stable generic Jordan type $[1]^{\frac{n}{2}-2}$ by Proposition \ref{P;sjt} (iv) (a) and Lemma \ref{L;p=2,sjt}. We conclude that $D_2{\downarrow_E}$ is not generically free since $n\neq 4$. Therefore, we have $c_{\sym{n}}(D_2)=w$ by Proposition \ref{P;sjt} (vii). Finally, when $n\equiv 2 \pmod 4$, we have $S_2\sim D_2+D_1+D^{(n)}$ by Theorem \ref{T;James}. Define $R$ to be $\text{Rad}(S_2)$. By \cite[Theorem 6 (ii)]{Peel}, we have the short exact sequences
\begin{align*}
0\rightarrow R{\downarrow_E}\rightarrow S_2{\downarrow_E}\rightarrow D_2{\downarrow_E}\rightarrow 0,\ 0\rightarrow D_1{\downarrow_E}\rightarrow R{\downarrow_E}\rightarrow D^{(n)}{\downarrow_E}\rightarrow 0.
\end{align*}
Suppose $D_2{\downarrow_E}$ is generically free. Note that both $R{\downarrow_E}$ and $S_2{\downarrow_E}$ have stable generic Jordan type $[1]^{\frac{n}{2}-2}$ by the short exact sequence on the left-hand side, Proposition \ref{P;sjt} (iv) (c) and Lemma \ref{L;p=2,sjt}. Using the other short exact sequence, Lemma \ref{L;sjtp=2} (ii) and Proposition \ref{P;sjt} (iv) (a), we know $R{\downarrow_E}$ and $D^{(n)}{\downarrow_E}$ have same stable generic Jordan type. The fact implies that $\frac{n}{2}-2=1$. It is a contradiction since $n\neq 6$. Hence $D_2{\downarrow_E}$ is not generically free, which implies that $c_{\sym{n}}(D_2)=w$ by Proposition \ref{P;sjt} (vii). The proof is now finished.
\end{proof}
\begin{rem}\label{R;p=2,n=6}
When $p=2$ and $n=6$, let $E$ and $F$ be $F_0$ and $K_1\times F_1$ respectively. By Theorem \ref{T;Maximal subgroup}, they are exactly all maximal elementary abelian $2$-subgroups of $\sym{6}$ up to $\sym{6}$-conjugation. By brute force computation, $\dim \mathrm V_{E}^\#(D^{(4,2)}{\downarrow_{E}})=1$ and $\dim \mathrm V_{F}^\#(D^{(4,2)}{\downarrow_{F}})=2$. The computation shows that $c_{\sym{6}}(D^{(4,2)})=2$ by the definition of complexity of a module.
\end{rem}

Theorem \ref{T;A} is now proved by combining Propositions \ref{P;(n-1,1)2}, \ref{P;CD(n-2,2)} and Remark \ref{R;p=2,n=6}.

\section{Proof of Theorem \ref{T;C}}
The section focuses on proving Theorem \ref{T;C}. We first show some needed results and then finish the proof of Theorem \ref{T;C}. For convenience, in the following discussion, we fix $r$ to be the reminder of $n$ when $n$ is divided by $p$. Moreover, throughout the whole section, let $s$ be an integer such that $0<s<p$.

We begin with a theorem which is useful in calculation of dimensions of simple modules of symmetric groups labelled by two-part partitions.
\begin{thm}\label{T; Lucas}\cite[Lucas Theorem]{Lucas}\label{Lucas}
Let two non-negative integers $a$ and $b$ have the $p$-adic sums $\sum_{0\leq i}p^{i}a_{i}$ and $\sum_{ 0\leq i}p^{i}b_{i}$ respectively. Then ${a\choose b}\equiv \prod_{0\leq i}{a_i\choose b_i} \pmod p$.
\end{thm}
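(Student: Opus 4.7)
The plan is to prove Lucas's theorem via the classical polynomial-identity approach in $\Z[x]$ modulo $p$. The idea is that the generating function $(1+x)^a$, reduced modulo $p$, factorizes according to the $p$-adic digits of $a$; extracting the coefficient of $x^b$ then reads off the claimed product directly, with no combinatorics needed beyond uniqueness of base-$p$ representations.

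First I would establish the Freshman's Dream: $(1+x)^p \equiv 1 + x^p \pmod p$ as polynomials in $\Z[x]$. This is immediate from the binomial expansion together with the divisibility $p \mid \binom{p}{k}$ for $1 \leq k \leq p-1$, which in turn follows from the identity $k\binom{p}{k} = p\binom{p-1}{k-1}$ combined with $\gcd(k,p)=1$ in that range. Iterating (raising both sides to the $p$-th power) gives $(1+x)^{p^i} \equiv 1 + x^{p^i} \pmod p$ for every $i \geq 0$. Using the $p$-adic expansion $a = \sum_{i \geq 0} p^i a_i$ as a finite sum with $0 \leq a_i < p$, I would then factor
\[
(1+x)^a \;=\; \prod_{i \geq 0} \bigl((1+x)^{p^i}\bigr)^{a_i} \;\equiv\; \prod_{i \geq 0} (1 + x^{p^i})^{a_i} \pmod p.
\]

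Next I would extract the coefficient of $x^b$ from each side. On the left it is $\binom{a}{b}$. On the right, expanding each factor as $(1 + x^{p^i})^{a_i} = \sum_{k=0}^{a_i} \binom{a_i}{k} x^{p^i k}$ and multiplying out, the coefficient of $x^b$ is
\[
\sum_{\substack{0 \leq k_i \leq a_i \\ \sum_i p^i k_i \,=\, b}} \,\prod_{i \geq 0} \binom{a_i}{k_i}.
\]
Because the constraint $a_i < p$ forces every admissible $k_i$ into the range $[0,p)$, uniqueness of the $p$-adic expansion of $b$ selects precisely the tuple $(k_i) = (b_i)$, leaving $\prod_{i \geq 0} \binom{a_i}{b_i}$, which matches the statement.

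The main obstacle is honestly negligible: the whole argument reduces to Freshman's Dream plus uniqueness of $p$-adic expansions. The only points needing brief care are (a) the base congruence $p \mid \binom{p}{k}$, and (b) the uniqueness argument that rules out any tuple $(k_i) \neq (b_i)$ contributing to the coefficient of $x^b$. Alternative routes via Kummer's valuation formula (the $p$-adic valuation of $\binom{a}{b}$ equals the number of carries when adding $b$ to $a-b$ in base $p$), or a direct induction on the number of $p$-adic digits of $a$ using Pascal-type recurrences, are equally valid but somewhat longer; the generating-function proof sketched above is by far the shortest.
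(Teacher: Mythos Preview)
Your proof is correct and is the standard generating-function argument for Lucas's theorem. However, the paper does not actually prove this statement: it is quoted as a classical result with a citation to Lucas's original 1878 paper, and no proof is given in the text. So there is nothing to compare against; your proposal simply supplies a complete (and entirely standard) proof where the paper only records a reference.
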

The following lemma is a property of $p$-restricted two-part partitions.

\begin{lem}\label{L:pweight} \cite[Lemma 5.2]{JLW}\label{JLW}
Given a $p$-restricted partition $(a, b)$, it has $p$-weight either zero or one. Furthermore, it has $p$-weight one if and only if $a-b<p-1\leq a$.
\end{lem}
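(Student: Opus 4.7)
The plan is to compute the $p$-weight of $(a,b)$ directly from rim $p$-hook removal. To set up, I would first note that the conjugate of $(a,b)$ is $(2^{b},1^{a-b})$, so $p$-restriction of $(a,b)$ amounts to the two inequalities $b<p$ and $a-b<p$. In particular $a+b\leq 2(p-1)<2p$, so at most one rim $p$-hook can ever be removed before the remaining diagram has fewer than $p$ boxes, which already forces the $p$-weight to lie in $\{0,1\}$ and takes care of the first assertion.

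For the equivalence in the second assertion, I would enumerate the possible rim $p$-hooks of the two-row shape $(a,b)$. A hook contained entirely in row $1$ requires $a-b\geq p$ and a hook contained entirely in row $2$ requires $b\geq p$; both are excluded by $p$-restriction. The only remaining candidate is a wrap-around strip of the form $(2,j),(2,j+1),\ldots,(2,b),(1,b),(1,b+1),\ldots,(1,a)$. Forcing the total length to equal $p$ pins down $j=a-p+2$, and the conditions $1\leq j$ and $j\leq b$ translate precisely into $a\geq p-1$ and $a-b\leq p-2$, respectively. One also checks that the removal leaves the valid partition $(b-1,a-p+1)$ under exactly these conditions.

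Combining the two directions shows that the $p$-weight equals $1$ if and only if $a-b<p-1\leq a$, as claimed. The one delicate point to watch in the enumeration is the boundary case $a-b=p-1$ with $a\geq p-1$: here $j=b+1$ falls outside row $2$, the candidate hook degenerates to the horizontal strip $(1,b),\ldots,(1,a)$, and its removal yields the non-partition $(b-1,b)$, so no valid rim $p$-hook exists; this confirms that the left inequality must be strict. This boundary analysis is essentially the only obstacle, the rest being straightforward bookkeeping on a two-row Young diagram; one could equally well run the argument on the two-bead $p$-abacus for $(a,b)$, whose $\beta$-numbers are $\{a+1,b\}$, and observe that a single upward slide is available exactly under the same conditions.
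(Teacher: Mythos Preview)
The paper does not supply its own proof of this lemma; it is quoted from \cite{JLW}, so there is nothing in the paper to compare against and I assess your proposal on its own merits.

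Your enumeration of rim $p$-hooks in the second and third paragraphs is correct and yields precisely the condition $a-b<p-1\leq a$. The first paragraph, however, contains an arithmetic error: from $b<p$ and $a-b<p$ one obtains $a\leq 2(p-1)$ and hence $a+b\leq 3(p-1)$, not $a+b\leq 2(p-1)$. Consequently the size argument does not by itself rule out a second hook removal. For instance, with $p=5$ the partition $(7,4)$ is $5$-restricted with $a+b=11>2p$; removing its unique rim $5$-hook leaves $(3,3)$, which still has more than $p$ boxes. The conclusion that the $p$-weight is at most one is nevertheless true, and the fix is immediate with the tools you already have in hand: after removing the wrap-around hook the resulting partition $(b-1,a-p+1)$ has $\beta$-numbers $\{b,\,a+1-p\}$, both strictly less than $p$, so no further rim $p$-hook can be removed. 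Equivalently, on the two-bead abacus you invoke at the end, after the single available slide both beads lie among the first $p$ positions. Replacing the faulty inequality with this one-line observation repairs the argument.
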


For a partition $(n-m,m)$, note that it has $p$-core $(r)$ if $p\mid m$. The following lemma determines its $p$-core if $p\nmid m$.
\begin{lem}\label{L:pcore}
Let $m$ be a non-negative integer such that $2mp+2s\leq n$ and let $\lambda:=(n-pm-s,pm+s)$.
\begin{enumerate}
\item [\em(i)]When $p=2$, $\kappa_2(\lambda)=\begin{cases} \varnothing, &\text{if}\ r=0,\\ (2,1), & \text{if}\ r=1.\end{cases}$
\item [\em(ii)] When $2<p$ and $0<s\leq \frac{p-1}{2}$, \[\kappa_{p}(\lambda)=\begin{cases}
(p+r-s,s), & \text{if}\ 0\leq r<s-1,\\
(s-1,r-s+1), & \text{if}\ s-1\leq r<2s-1,\\
(p+s-1,s), & \text{if}\ r=2s-1,\\
(r-s,s), & \text{if}\ 2s-1<r\leq p-1.
\end{cases} \]

\item [\em(iii)] When $2<p$ and $s=\frac{p+1}{2}$, \[ \kappa_{p}(\lambda)=\begin{cases}
(\frac{3p-1}{2},\frac{p+1}{2}), & \text{if}\ r=0,\\
(\frac{p-1}{2}+r,\frac{p+1}{2}), & \text{if}\ 0<r<\frac{p-1}{2}, \\
(\frac{p-1}{2},r-\frac{p-1}{2}), & \text{if}\ \frac{p-1}{2}\leq r\leq p-1.\\

\end{cases} \]
\item [\em(iv)] When $3<p$ and $\frac{p+1}{2}<s<p$, \[ \kappa_{p}(\lambda)=\begin{cases}
(s-1,p+r-s+1), &\text{if}\ 0\leq r<2s-p-1,\\
(p+s-1,s), & \text{if}\ r=2s-p-1,\\
(p+r-s,s), & \text{if}\ 2s-p-1<r<s-1,\\
(s-1,r-s+1), &\text{if}\ s-1\leq r\leq p-1.\\
\end{cases} \]
\end{enumerate}
\end{lem}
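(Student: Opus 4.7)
The plan is to read off the $p$-core from the James $p$-abacus. For a two-row partition $(\mu_1,\mu_2)$ with $\mu_1\geq\mu_2\geq 0$ the first-column beta-numbers are $\{\mu_1+1,\mu_2\}$; removing a rim $p$-hook is the same as sliding one bead up its runner by one step, and the $p$-core is obtained by pushing every bead to the top of its runner.

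Applied to $\lambda=(n-pm-s,pm+s)$ the beads sit at $\beta_1=n-pm-s+1$ and $\beta_2=pm+s$. Bead $\beta_2$ lies on runner $s$ at level $m$, while bead $\beta_1$ lies on runner
\[
u=\begin{cases} r-s+1, & \text{if } r\geq s-1,\\ p+r-s+1, & \text{if } r<s-1.\end{cases}
\]
After sliding both beads to the top of their runners, there are two outcomes. If $u\neq s$ then the $p$-core has beta-numbers $\{u,s\}$, so $\kappa_p(\lambda)=(u-1,s)$ when $u>s$ and $\kappa_p(\lambda)=(s-1,u)$ when $u<s$. If $u=s$ the two beads share one runner at levels $0$ and $1$; the core then has beta-numbers $\{s,p+s\}$, so $\kappa_p(\lambda)=(p+s-1,s)$. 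The coincidence $u=s$ is equivalent to $r\equiv 2s-1\pmod p$.

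It remains to match the sub-intervals of $r\in\{0,1,\ldots,p-1\}$ against the four parts of the statement. For $p=2$ one has $s=1$: then $r=0$ gives $u=0<s$ and $\kappa_2(\lambda)=\varnothing$, while $r=1$ is the coincidence yielding $(2,1)$. For $2<p$ and $0<s\leq(p-1)/2$ the threshold $2s-1$ lies in $[0,p-1]$ and cuts $\{0,\ldots,p-1\}$ into the four sub-intervals appearing in (ii). For $s=(p+1)/2$ the threshold wraps around to $r=0$, producing (iii). For $3<p$ and $(p+1)/2<s<p$ the threshold is $r=2s-p-1\in(0,p-1)$, producing (iv). In every sub-case one checks whether $u<s$, $u=s$, or $u>s$, which reduces to comparing $r$ with $2s-1$ or with $2s-p-1$. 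The main obstacle is purely bookkeeping: one must track the wrap-around $r-s+1\bmod p$ and the ordering of $u$ versus $s$ carefully, so as to output the two parts of the core in the correct order across all twelve sub-cases.
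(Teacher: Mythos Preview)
Your argument is correct. The abacus computation with two beads at $\beta_1=n-pm-s+1$ and $\beta_2=pm+s$ is set up properly, the runner $u$ of $\beta_1$ is identified correctly as $(r-s+1)\bmod p$, and the trichotomy $u<s$, $u=s$, $u>s$ together with the wrap-around of $2s-1$ modulo $p$ does yield exactly the twelve sub-cases listed. I checked each sub-case of (i)--(iv) and the output $(u-1,s)$, $(s-1,u)$, or $(p+s-1,s)$ agrees with the statement every time.

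Your route differs from the paper's. The paper does not use the abacus: it strips horizontal $p$-hooks from each row of $\lambda$ until a $p$-restricted two-part partition $(a,b)$ remains, and then invokes Lemma~\ref{L:pweight} (a $p$-restricted $(a,b)$ has $p$-weight one iff $a-b<p-1\leq a$) to decide whether one further hook can be removed. Your beta-number argument is more uniform --- it handles all cases in one pass and makes the dichotomy ``same runner versus different runners'' transparent --- whereas the paper's method is more hands-on but relies on an auxiliary lemma. Both are short and standard; the abacus perhaps makes the residue conditions on $r$ and $s$ a little easier to read off.
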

\begin{proof}
One removes all horizontal $p$-hooks of $\lambda$ successively to make it become a $p$-restricted partition and then uses Lemma \ref{L:pweight} to compute directly.
\end{proof}

A direct corollary of Lemma \ref{L:pcore} is given as follows.

\begin{cor}\label{L;Findpcore}
A partition $(n-m,m)$ has $p$-core $(r)$ only if $m\equiv 0\ \text{or}\ r+1\pmod p$.
\end{cor}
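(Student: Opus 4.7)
The plan is to reduce to the case $p\nmid m$ and then verify the corollary by a direct case analysis based on Lemma \ref{L:pcore}. Writing $m = pm' + s$ with $0\leq s<p$, the case $s=0$ yields $m\equiv 0\pmod p$ immediately, so I may assume $0<s<p$. In that situation the partition $(n-m,m)$ already has the shape $(n-pm'-s,\, pm'+s)$ required by the hypotheses of Lemma \ref{L:pcore}, so the explicit description of $\kappa_p$ provided there is available.

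Since $(r)$ is a one-row partition, the hypothesis $\kappa_p((n-m,m)) = (r)$ forces the second part of the formula in Lemma \ref{L:pcore} to vanish. I would then run through the four clauses of Lemma \ref{L:pcore} in turn: the case $p=2$; the case $2<p$ with $s\leq (p-1)/2$; the case $s=(p+1)/2$; and the case $s>(p+1)/2$. In each clause I would locate the sub-cases in which the second part of the given formula can equal zero, and then verify that this vanishing forces exactly the relation $s = r+1$ (with the remaining first part reducing to $(r)$). Concretely, in clause (ii) this happens inside the range $s-1\leq r<2s-1$ and yields $r = s-1$; in clause (iii) it happens inside $(p-1)/2\leq r\leq p-1$ and yields $r=(p-1)/2$; and in clause (iv) it happens inside $s-1\leq r\leq p-1$ and yields $r=s-1$. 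In every surviving situation $s\equiv r+1\pmod p$, so $m\equiv s\equiv r+1\pmod p$ as required.

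The argument is purely bookkeeping, and the only care required is to cover all ranges of $r$ and $s$ exhaustively and to confirm that no other branch of Lemma \ref{L:pcore} allows the second part to vanish. As a sanity check, in clause (i) with $p=2$ the output $\kappa_2=\varnothing$ at $r=0$ should be read as the one-row partition $(0)=(r)$, and indeed $s=1=r+1$ there, so this degenerate case is consistent with the general pattern. I do not expect any genuine obstacle beyond the case check itself.
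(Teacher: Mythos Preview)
Your proposal is correct and follows exactly the approach the paper intends: the paper states this as a direct corollary of Lemma~\ref{L:pcore} without writing out a proof, and your case analysis is precisely the bookkeeping that verification requires. The only branch you do not mention explicitly is $p=2$, $r=1$, where $\kappa_2=(2,1)$ is never a one-row partition, so the hypothesis is vacuous there; otherwise your coverage of the clauses is complete.
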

For the reminder of the section, we fix $p$ to be an odd prime. The following lemma is an application of Lemma \ref{L:pcore} and Theorem \ref{T;James}.
\begin{lem}\label{L;Dstructure}
Let $2s\leq n$ and $\lambda:=(n-s,s)$.
\begin{enumerate}
\item [\em(i)] When $0<s\leq \frac{p+1}{2}$,
\[S^{\lambda}\sim\begin{cases} D^{\lambda}+D^{\mu}, &\text{if}\ s-1\leq r<2s-1,\\
D^{\lambda}, & \text{otherwise},
\end{cases}\]
where $\mu:=(n-r+s-1,r-s+1)$.
\item [\em(ii)] When $3<p$ and $\frac{p+1}{2}<s<p$,
\[S^{\lambda}\sim\begin{cases} D^{\lambda}+D^{\nu}, &\text{if}\ 0\leq r<2s-p-1,\\
D^{\lambda}, &\text{if}\ 2s-p-1\leq r<s-1,\\
D^{\lambda}+D^{\mu}, &\text{if}\ s-1\leq r\leq p-1,
\end{cases}\]
where $\mu:=(n-r+s-1,r-s+1)$ and $\nu:=(n-p-r+s-1,p+r-s+1)$.
\end{enumerate}
\end{lem}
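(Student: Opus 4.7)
The plan is to read off the composition factors of $S^{(n-s,s)}$ from Theorem \ref{T;James}, using the standing hypothesis $s<p$ to collapse the combinatorial condition $\sqsubseteq_p$ to a single mod-$p$ congruence, and then to enumerate the solutions by a short arithmetic case analysis.

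First I would invoke the standard dominance constraint on composition factors of Specht modules: any composition factor $D^\mu$ of $S^\lambda$ with $\lambda=(n-s,s)$ satisfies $\lambda\trianglelefteq\mu$, forcing $\mu=(n-t,t)$ for some $0\leq t\leq s$, and for the odd prime $p$ every such two-part partition is automatically $p$-regular. Theorem \ref{T;James} then gives $(S^{(n-s,s)}:D^{(n-t,t)})=1$ exactly when $s-t\sqsubseteq_p n-2t+1$, and zero otherwise. The crucial simplification is that $0\leq s-t\leq s<p$, so $s-t$ has at most one nonzero $p$-adic digit (the units digit). Hence the condition $s-t\sqsubseteq_p n-2t+1$ is equivalent to: either $s=t$ (contributing the always-present factor $D^\lambda$), or $s-t\equiv n-2t+1\pmod p$, which in turn is equivalent to the single congruence $s+t\equiv r+1\pmod p$.

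The remaining step is to solve $s+t\equiv r+1\pmod p$ for $t\in\{0,1,\ldots,s-1\}$. Since $s<p$ and $t<p$, the only candidate solutions are $t_1:=r-s+1$ and $t_2:=p+r-s+1$. In case (i) with $s\leq(p+1)/2$, the inequality $2s\leq p+1$ forces $t_2\geq s$, ruling it out; the candidate $t_1$ lies in $[0,s-1]$ exactly when $s-1\leq r<2s-1$, producing the extra factor $D^\mu$ with $\mu=(n-r+s-1,r-s+1)$, as stated. In case (ii) with $(p+1)/2<s<p$, one checks that $t_1\in[0,s-1]$ exactly when $s-1\leq r\leq p-1$ (the upper bound $r<2s-1$ is automatic from $r\leq p-1<2s-1$), giving the factor $D^\mu$, and that $t_2\in[0,s-1]$ exactly when $0\leq r<2s-p-1$, giving the factor $D^\nu$ with $\nu=(n-p-r+s-1,p+r-s+1)$; the two resulting ranges of $r$ are disjoint because $s-1\geq 2s-p-1$ reduces to $p\geq s$, and for the intermediate range $2s-p-1\leq r<s-1$ neither candidate is admissible, so only $D^\lambda$ occurs.

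The only real difficulty is the bookkeeping of endpoints: correctly placing each of the boundary values $r=s-1$, $r=2s-1$, and $r=2s-p-1$ into the appropriate sub-case, and confirming that the three sub-cases in part (ii) partition $\{0,1,\ldots,p-1\}$ as the statement demands. Once these inequalities are tracked carefully, the lemma follows immediately, and the composition factors identified above match the formulas given for $\mu$ and $\nu$.
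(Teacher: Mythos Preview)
Your proposal is correct and is essentially the approach the paper has in mind: the paper gives no detailed proof, merely stating that the lemma is ``an application of Lemma~\ref{L:pcore} and Theorem~\ref{T;James}.'' Your argument carries this out fully via Theorem~\ref{T;James} alone, reducing the criterion $s-t\sqsubseteq_p n-2t+1$ (valid since $s-t<p$) to the congruence $s+t\equiv r+1\pmod p$ and then doing the arithmetic case split; the reference to Lemma~\ref{L:pcore} in the paper presumably serves only to pre-identify the candidates $\mu,\nu$ via block (same $p$-core) considerations, a step your direct congruence analysis makes unnecessary.
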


\begin{lem}\label{L; YP1}
Let $i$ be an integer such that $1\leq i\leq p-1$. Let $n\equiv ip \pmod {p^{2}}$ and $\lambda$, $\mu$, $\nu$ be the partitions $(n-p,p)$, $(n-p+1,p-1)$, $(n-1,1)$ respectively. Take $E$ to be $E_{n/p}$. Then
\begin{enumerate}
\item [\em(i)] $Y^{\lambda}{\downarrow_E}$ has stable generic Jordan type $[1]^{\frac{n}{p}-1}$.
\item [\em(ii)]There exists a short exact sequence $0\rightarrow S^{\lambda}\rightarrow Y^\lambda\rightarrow S^{\nu}\rightarrow 0$.
\end{enumerate}
\end{lem}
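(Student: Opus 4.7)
The plan is to prove (i) by combining an orbit count on $M^\lambda{\downarrow_E}$ with the Young decomposition of $M^\lambda$, then showing that every summand except $Y^\lambda$ and $Y^{(n)}$ becomes generically free on restriction to $E$; and to prove (ii) by verifying the hypotheses of Lemma \ref{R;Spechtfiltration}(i).

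\textbf{Part (i).} Since $E=E_{n/p}$ has exactly $n/p$ orbits on $[n]$, each of size $p$, and the second row of $\lambda=(n-p,p)$ has length $p$, Lemma \ref{PModules} gives $M^\lambda{\downarrow_E}$ stable generic Jordan type $[1]^{n/p}$ (choosing which single orbit fills the second row yields $n/p$ unordered insertions). Writing $n=ip+p^2k$ and checking the criterion $p-s\subseteq_p n-2s$ of Theorem \ref{T;Henke} digit by digit, the non-zero values of $\varphi(n,p,s)$ occur precisely at $s=0$, $s=p$, and $(p+1)/2\leq s\leq p-1$, so
\[
M^\lambda\cong Y^\lambda\oplus Y^{(n)}\oplus\bigoplus_{s=(p+1)/2}^{p-1}Y^{(n-s,s)}.
\]
For each $s$ in the last range, Lemma \ref{L:pcore} with $r=0$ gives $|\kappa_p((n-s,s))|\in\{p,2p\}$, so the $p$-weight of $(n-s,s)$ is at most $n/p-1$. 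Since the complexity of $Y^{(n-s,s)}$ is bounded by the $p$-rank of the defect group of its block (which equals this $p$-weight), one has $\dim\mathrm{V}_E^{\#}(Y^{(n-s,s)}{\downarrow_E})\leq n/p-1<n/p=\dim\mathrm{V}_E^{\#}(\F)$, so $Y^{(n-s,s)}{\downarrow_E}$ is generically free by Proposition \ref{P;sjt}(vi). The summand $Y^{(n)}=\F$ contributes stable type $[1]^1$, and Lemma \ref{L;YongModules}(ii) forces $Y^\lambda{\downarrow_E}$ to have stable type $[1]^a$ for some $a\geq 0$. Additivity of stable generic Jordan types under direct sum (Proposition \ref{P;sjt}(iii)) then pins down $a=n/p-1$.

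\textbf{Part (ii).} I apply Lemma \ref{R;Spechtfiltration}(i) with the lemma's $\mu$ taken to be $\nu=(n-1,1)$ and $\kappa=\varnothing$. The dominance chain $\lambda\lhd(n-1,1)\lhd(n)$ is immediate, and all three partitions have $p$-core $\varnothing$ since $p\mid n$. A partial-sum check shows that any partition strictly between $\lambda$ and $(n-1,1)$ in dominance order is necessarily of the form $(n-s,s)$ with $2\leq s\leq p-1$, and Lemma \ref{L:pcore} gives each such a non-trivial $p$-core; there are no partitions strictly between $(n-1,1)$ and $(n)$. The Young-multiplicity conditions $Y^{(n)}\mid M^\lambda$ and $Y^{(n-1,1)}\nmid M^\lambda$ correspond to the $s=0$ and $s=1$ instances of the digit check from (i) and follow from Theorem \ref{T;Henke}. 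Lemma \ref{R;Spechtfiltration}(i) then produces the required short exact sequence.

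\textbf{Main obstacle.} The base-$p$ digit computation that pins down which $\varphi(n,p,s)$ are non-zero is the critical technical step; it drives both the Young decomposition in (i) and the two multiplicity verifications in (ii). Once it is established, the remaining work --- the $p$-core case analysis, the defect-group bound on complexity, and the partial-sum argument between $\lambda$ and $(n-1,1)$ --- reduces to routine bookkeeping.
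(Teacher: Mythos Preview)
Your proof is correct. For part (ii) you argue exactly as the paper does, verifying the hypotheses of Lemma~\ref{R;Spechtfiltration}(i) via Theorem~\ref{T;Henke} and the $p$-core computation (the paper cites Corollary~\ref{L;Findpcore} where you invoke Lemma~\ref{L:pcore}, but the content is the same).

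For part (i) your route genuinely differs from the paper's. You compute the full Young decomposition of $M^\lambda$ explicitly, then dispose of each middle summand $Y^{(n-s,s)}$ (for $(p+1)/2\le s\le p-1$) by bounding its complexity by the $p$-weight of $(n-s,s)$, which Lemma~\ref{L:pcore} shows is at most $n/p-1$, and then applying Proposition~\ref{P;sjt}(vi). The paper instead never pins down exactly which $Y^{(n-s,s)}$ occur: it observes that for $0<m<p$ the implication $p-m\subseteq_p n-2m\Rightarrow p-m-1\subseteq_p n-2m$ holds, so every such $Y^{(n-m,m)}$ occurring in $M^\lambda$ also occurs in $M^\mu=M^{(n-p+1,p-1)}$; since no size-$p$ orbit can fill a row of length $p-1$, Lemma~\ref{PModules} makes $M^\mu{\downarrow_E}$ generically free in one stroke, and Proposition~\ref{P;sjt}(iii) finishes. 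The paper's trick sidesteps both the digit-by-digit determination of the decomposition and the $p$-core case analysis; your argument is more explicit about the structure of $M^\lambda$ but carries out more bookkeeping to reach the same conclusion.
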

\begin{proof}
We claim that $M^{\lambda}\cong Y^{\lambda}\oplus M\oplus Y^{(n)}$ where $M\mid M^{\mu}$. Let $m$ be an integer such that $2m\leq n$ and $0<m<p$ and $\eta:=(n-m,m)$. We have $p-m-1\subseteq_p n-2m$ if $p-m\subseteq_p n-2m$. By Theorem \ref{T;Henke}, the relation implies that $Y^{\eta}\mid M^{\mu}$ if $Y^{\eta}\mid M^{\lambda}$. Using Theorem \ref{T;Henke} again, we also have  $Y^{(n)}\mid M^{\lambda}$ as $p\subseteq_p n$. The claim is shown. Note that $M^{\lambda}{\downarrow_E}$ has stable generic Jordan type $[1]^{\frac{n}{p}}$ and $M^{\mu}{\downarrow_E}$ is generically free by Lemma \ref{PModules}. Then (i) follows by the claim and Proposition \ref{P;sjt} (iii).

For (ii), by Corollary \ref{L;Findpcore}, observe that $\kappa_p(\lambda)=\kappa_p((n))$ and $\nu$ is the unique partition of $n$ such that $\lambda\lhd\nu\lhd(n)$ and $\kappa_p(\nu)=\kappa_p(\lambda)$. As $p-1\nsubseteq_p n-2$, we get $Y^\nu\nmid M^{\lambda}$ by Theorem \ref{T;Henke}. These facts and the claim
imply the existence of the desired short exact sequence by Lemma \ref{R;Spechtfiltration} (i).
\end{proof}

\begin{lem}\label{L; YP2}
Let $n\equiv p+r\pmod {p^{2}}$ and $\lambda$, $\mu$, $\nu$ be the partitions $(n-p-s,p+s)$, $(n-p-s+1,p+s-1)$ and $(n-s,s)$ respectively where $r=2s-1$. Then 
\begin{enumerate}
\item [\em(i)] $M^{\lambda}\cong Y^{\lambda}\oplus M$ where $M\mid M^{\mu}$. Moreover, we have $S^{\nu}\cong D^{\nu}\cong Y^{\nu}$.
\item [\em(ii)] there exists a short exact sequence $0\rightarrow S^{\lambda}\rightarrow Y^\lambda\rightarrow S^{\nu}\rightarrow 0$.
\end{enumerate}
\end{lem}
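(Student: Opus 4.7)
My plan is to follow the template of Lemma \ref{L; YP1}: part (i) will come from a $p$-Kostka analysis via Theorem \ref{T;Henke}, supplemented by Lemma \ref{L;Dstructure} and Lemma \ref{L;YongModules}(i), and part (ii) will be an application of Lemma \ref{R;Spechtfiltration}(ii) once the relevant $p$-cores and dominance relations have been verified.

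For (i), the key arithmetic input is that $n \equiv p+2s-1 \pmod{p^2}$, which forces $n-2s \equiv p-1 \pmod{p^2}$. In particular the coefficient of $p^{1}$ in the $p$-adic expansion of $n-2s$ is zero, so $p \not\subseteq_p n-2s$; by Theorem \ref{T;Henke} this already gives $Y^{(n-s,s)} \nmid M^\lambda$. For the decomposition, suppose $Y^{(n-t,t)} \mid M^\lambda$ with $t \neq p+s$; then $p+s-t \subseteq_p n-2t$ with $t < p+s$. If the $0$-th $p$-adic digit of $p+s-t$ were zero we would have $t \equiv s \pmod p$, which in the range $0 \leq t < p+s$ and with $1 \leq s < p$ leaves only $t=s$, already excluded. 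Hence the $0$-th digit is positive, and subtracting one yields $p+s-1-t \subseteq_p n-2t$, i.e.\ $Y^{(n-t,t)} \mid M^\mu$. Since each of the relevant $p$-Kostka numbers is either $0$ or $1$ by Theorem \ref{T;Henke}, this gives $M^\lambda \cong Y^\lambda \oplus M$ with $M \mid M^\mu$. The second assertion follows from Lemma \ref{L;Dstructure}(i) applied to $\nu = (n-s,s)$: since $r = 2s-1$ does not lie in $[s-1,\,2s-1)$, we are in the ``otherwise'' branch, giving $S^\nu \cong D^\nu$, and then Lemma \ref{L;YongModules}(i) identifies both with $Y^\nu$.

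For (ii), I invoke Lemma \ref{R;Spechtfiltration}(ii) on the pair $(\lambda,\nu)$. Lemma \ref{L:pcore}(ii) in the subcase $r=2s-1$ yields $\kappa_p(\lambda)=\kappa_p(\nu)=(p+s-1,s)=:\kappa$, whereas $\kappa_p((n))=(r)\neq\kappa$, placing us squarely in case (ii) of the filtration lemma. The dominance conditions (b) and (c) reduce to a two-row check, since any partition strictly dominated between two two-part partitions of $n$ is itself a two-part partition by comparing the second partial sums; and a two-part partition $(n-t,t)$ has $p$-core $\kappa$ precisely when $t \equiv s \pmod p$ (by a beta-number residue calculation of the form carried out above for $\lambda$). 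No such $t$ lies in the open interval $(s,p+s)$ for condition (b) or in $[0,s)$ for condition (c). Together with $Y^\nu \nmid M^\lambda$ established in (i), all hypotheses of Lemma \ref{R;Spechtfiltration}(ii) are met, producing the required short exact sequence.

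The principal obstacle is the $p$-adic bookkeeping in (i): pinning down exactly which $Y^{(n-t,t)}$ appear as summands of $M^\lambda$ and verifying they reappear in $M^\mu$, with the exception of $Y^\lambda$ itself. Once that analysis is in hand, (ii) is essentially a verification exercise for the hypotheses of Lemma \ref{R;Spechtfiltration}(ii).
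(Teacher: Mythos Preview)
Your proposal is correct and follows essentially the same approach as the paper: for (i) you carry out the same $p$-Kostka comparison via Theorem~\ref{T;Henke} (the paper splits into the ranges $0\leq m<s$ and $s<m<p+s$ where you argue uniformly via the $0$-th digit, but the content is identical), and for (ii) you invoke Lemma~\ref{R;Spechtfiltration}(ii) after checking the $p$-core conditions through Lemma~\ref{L:pcore}. The only point worth making explicit is that $r=2s-1<p$ forces $s\leq\frac{p-1}{2}<\frac{p+1}{2}$, which is what places you in case (i) of Lemma~\ref{L;Dstructure}; the paper states this, and you use it implicitly.
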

\begin{proof}
Let $m$ be an integer such that $2m\leq n$ and $0\leq m<p+s$ and $\eta:=(n-m,m)$. We claim that $p+s-m-1\subseteq_p n-2m$ if $p+s-m\subseteq_p n-2m$. Indeed, we have above relation if $0\leq m<s$. Note that $p\not\subseteq_p n-2s$ since $r=2s-1$. If $s<m<p+s$, we have $0<p+s-m<p$ and therefore $p+s-m-1\subseteq_p n-2m$ if $p+s-m\subseteq_p n-2m$. The claim is proved. By the claim and Theorem \ref{T;Henke}, we get $Y^{\eta}\mid M^{\mu}$ if $Y^{\eta}\mid M^{\lambda}$. So the first isomorphic formula follows. As $s=\frac{r+1}{2}<\frac{p+1}{2}$, we also have $S^{\nu}\cong D^{\nu}\cong Y^{\nu}$ by Lemmas \ref{L;Dstructure}, \ref{L;YongModules} (i). The proof of (i) is complete.

For (ii), by Lemma \ref{L:pcore}, observe that $\kappa_p(\lambda)\neq\kappa_p((n))$ and $\nu$ is the unique partition of $n$ such that $\lambda\lhd\nu\lhd (n)$ and $\kappa_p(\nu)=\kappa_p(\lambda)$. Using Theorem \ref{T;Henke}, we know $Y^{\nu}\nmid M^{\lambda}$ as $p\not\subseteq_p n-2s$. These facts imply the existence of the desired short exact sequence by Lemma \ref{R;Spechtfiltration} (ii).
\end{proof}

\begin{lem}\label{L;p=3,YP2}
Let $n\equiv p+r\pmod {p^{2}}$ and $\lambda$, $\mu$ be the partitions $(n-p-s,p+s)$, $(n-s,s)$ respectively where $r=2s-1$. Take $E$ to be $E_{(n-p-r)/p}$. Let $Y^{\lambda}{\downarrow_E}$ and $Y^{\mu}{\downarrow_E}$ have stable generic Jordan types $[1]^a$ and $[1]^b$ respectively. Then $2b<a$.
\end{lem}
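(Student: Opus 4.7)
The plan is to bound $a$ from below and $b$ from above by comparing the stable generic Jordan types of $Y^\lambda{\downarrow_E}$ and $Y^\mu{\downarrow_E}$ to those of the full Young permutation modules $M^\lambda{\downarrow_E}$ and $M^\mu{\downarrow_E}$, which can be read off directly from Lemma \ref{PModules}.

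First I would observe that $E=E_{(n-p-r)/p}$ acts on $[n]$ with $k:=(n-p-r)/p$ orbits of size $p$ and $p+r$ fixed points. Counting the unordered ways to insert these orbits into the two rows of each partition, for $\lambda=(n-p-s,p+s)$ with $s<p$, the second row can accommodate either zero or exactly one orbit of size $p$, so Lemma \ref{PModules} yields
\[
m_\lambda=\binom{p+r}{p+s}+k\binom{p+r}{s},
\]
whereas for $\mu=(n-s,s)$ only the zero-orbit configuration is possible, giving
\[
m_\mu=\binom{p+r}{s}.
\]

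Next I would invoke Lemma \ref{L; YP2} (i), which gives $M^\lambda\cong Y^\lambda\oplus M$ with $M\mid M^\mu$. Since stable generic Jordan types add under direct sums by Proposition \ref{P;sjt} (iii), and each $Y^\nu{\downarrow_E}$ has stable type of the form $[1]^\ast$ by Lemma \ref{L;YongModules} (ii), the stable exponent $c$ of $M{\downarrow_E}$ satisfies $0\le c\le m_\mu$. Therefore
\[
a=m_\lambda-c\ge m_\lambda-m_\mu=\binom{p+r}{p+s}+(k-1)\binom{p+r}{s}.
\]
Symmetrically, $Y^\mu\mid M^\mu$ forces $b\le m_\mu=\binom{p+r}{s}$.

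Combining these two inequalities I would conclude
\[
a-2b\ge m_\lambda-3m_\mu=\binom{p+r}{p+s}+(k-3)\binom{p+r}{s}.
\]
The congruence $n\equiv p+r\pmod{p^2}$ together with the partition condition $2(p+s)\le n$ forces $n\ge p+r+p^2$, whence $k\ge p\ge 3$. Since $s\le r=2s-1$ ensures $\binom{p+r}{p+s}>0$, the right-hand side is the sum of a strictly positive term and a non-negative term, so $a>2b$. The only real obstacle is being careful with the orbit-insertion count and verifying that $k\ge 3$ from the mod-$p^2$ congruence; everything else is direct arithmetic using Lemma \ref{L; YP2} (i).
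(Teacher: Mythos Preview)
Your overall strategy---bounding $a$ from below and $b$ from above via the stable types of the full Young permutation modules---matches the paper's approach for $p>3$, and your computations of $m_\lambda$ and $m_\mu$ as well as the bound $b\le m_\mu$ are correct.

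The gap is in your lower bound for $a$. When you invoke Lemma~\ref{L; YP2}(i) as giving $M^\lambda\cong Y^\lambda\oplus M$ with $M\mid M^\mu$, you have been caught by a clash of notation: in Lemma~\ref{L; YP2} the symbol $\mu$ stands for $(n-p-s+1,\,p+s-1)$, whereas in the present lemma $\mu=(n-s,s)$. What Lemma~\ref{L; YP2}(i) actually yields is $M\mid M^{(n-p-s+1,\,p+s-1)}$, so the correct upper bound on the complement type is
\[
c\ \le\ m_{(n-p-s+1,\,p+s-1)}=\binom{p+r}{p+s-1}+k\binom{p+r}{s-1},
\]
not $c\le m_\mu=\binom{p+r}{s}$. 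Your stronger bound is genuinely false, not merely unjustified: for $p=3$, $s=r=1$, $n=22$ one has $M\cong Y^{(19,3)}\oplus Y^{(22)}$ with $c=7$, while $m_{(21,1)}=4$; equivalently $a=18$, but your inequality would predict $a\ge m_\lambda-m_\mu=25-4=21$.

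With the corrected bound on $c$, the target inequality becomes $2m_\mu<m_\lambda-m_{(n-p-s+1,\,p+s-1)}$, which after using $r=2s-1$ and binomial symmetry reduces (at the minimal admissible $n$) to $2s<(p-3)p$. This holds only for $p>3$, so the case $p=3$ must be handled separately by computing $a$ and $b$ exactly. That is precisely how the paper proceeds.
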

\begin{proof}
Denote the partition $(n-p-s+1,p+s-1)$ by $\nu$. When $p=3$, we have $r=s=1$. In the case, note that $S^\mu\cong D^\mu$ and $b=3$ by Lemmas \ref{L; YP2} (i) and \ref{L;(n-1,1)sjt}. From Theorem \ref{T;Henke}, we get $M^\lambda\cong Y^\lambda\oplus Y^\nu\oplus Y^{(n)}$ and $M^\nu\cong Y^\nu\oplus Y^\mu\oplus Y^{(n)}$. Using the two isomorphic formulae, Lemma \ref{PModules} and Proposition \ref{P;sjt} (iii), we obtain $a=n-4$. The inequality thus holds as $n\equiv 4\pmod 9$.

When $3<p$, let $M^{\lambda}{\downarrow_E}$, $M^{\mu}{\downarrow_E}$, $M^{\nu}{\downarrow_E}$ have stable generic Jordan types $[1]^{m_\lambda}$, $[1]^{m_\mu}$, $[1]^{m_\nu}$ respectively. Using Lemma \ref{PModules}, we can obtain $m_{\lambda}= {p+r\choose p+s}+\frac{n-p-r}{p}{p+r\choose s}$, $m_{\nu}={p+r\choose p+s-1}+\frac{n-p-r}{p}{p+r\choose s-1}$ and $m_\mu={p+r\choose s}$. Notice that $m_{\lambda}-m_{\nu}\leq a$ and $b\leq m_\mu$ by the first isomorphic formula of Lemma \ref{L; YP2} (i) and Proposition \ref{P;sjt} (iii). To prove that $2b<a$, it is enough to show that $2m_\mu<m_\lambda-m_\nu$. We need to transfer the inequality to an easier one and justify the new one. The transformation is given as follows.
\begin{align*}
&2m_\mu<m_\lambda-m_\nu \\
&\Leftrightarrow 2{p+2s-1\choose s}<(\frac{n-p-2s+1}{p}-1)({p+2s-1\choose s}-{p+2s-1\choose s-1})\\
&\Leftarrow 2{p+2s-1\choose s}<(p-1)({p+2s-1\choose s}-{p+2s-1\choose s-1})\\
&\Leftrightarrow 2s<(p-3)p
\end{align*}
In the transformation, the second row follows by using $r=2s-1$. Note that ${p+2s-1\choose s-1}<{p+2s-1\choose s}$ and the third row follows by taking $n$ to be $p^2+p+2s-1$. Finally, we have the last row by direct calculation. The correctness of the inequality in the last row is clear since we have $3<p$ and $0<s<p$. The lemma follows.
\end{proof}

One can mimic the proof of Lemma \ref{L; YP2} to show the following lemma.
\begin{lem}\label{L;YP3}
Let $n\equiv2p+r\pmod {p^{2}}$ and $\lambda$, $\mu$, $\nu$ be the partitions $(n-p-s,p+s)$, $(n-p-s+1,p+s-1)$ and $(n-s,s)$ respectively where $r=2s-p-1$. Then
\begin{enumerate}
\item [\em(i)]$M^{\lambda}\cong Y^{\lambda}\oplus M$ where $M\mid M^{\mu}$. Moreover, we have $S^{\nu}\cong D^{\nu}\cong Y^{\nu}$.
\item [\em(ii)]There exists a short exact sequence $0\rightarrow S^{\lambda}\rightarrow Y^\lambda\rightarrow S^{\nu}\rightarrow 0$.
\end{enumerate}
\end{lem}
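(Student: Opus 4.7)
The plan is to mimic the proof of Lemma \ref{L; YP2} almost verbatim, exploiting that the crucial congruence $n - 2s \equiv p - 1 \pmod{p^2}$ still holds in the present setting. Indeed, from $n \equiv 2p + r \pmod{p^2}$ with $r = 2s - p - 1$ we find $n - 2s \equiv p - 1 \pmod{p^2}$, so the initial $p$-adic digits of $n - 2s$ are $(p-1, 0, \ldots)$ and in particular $p \not\subseteq_p n - 2s$.

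For part (i), I will verify the same combinatorial claim as in Lemma \ref{L; YP2}: for every integer $m$ with $0 \leq m < p + s$, the containment $p + s - m \subseteq_p n - 2m$ implies $p + s - m - 1 \subseteq_p n - 2m$. The case $s < m < p + s$ is immediate because $0 < p + s - m < p$ is a single $p$-adic digit; the case $m = s$ is vacuous because $p \not\subseteq_p n - 2s$. For $0 \leq m < s$, one expands the $p$-adic digits of $p + s - m = (s - m) + 1 \cdot p$ and of $n - 2m = p + (2s - 2m - 1) + N p^2$, and splits on whether $2s - 2m - 1 < p$: if so, the inclusion reduces to a direct digit check; if not, the hypothesis $p + s - m \subseteq_p n - 2m$ already fails because $s - m < p$. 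Combined with Theorem \ref{T;Henke}, the claim yields $M^{\lambda} \cong Y^{\lambda} \oplus M$ with $M \mid M^{\mu}$. For $S^{\nu} \cong D^{\nu} \cong Y^{\nu}$, note that $\nu = (n - s, s)$ with $s \geq (p+1)/2$: when $s = (p+1)/2$ we have $r = 0$ and the condition $s - 1 \leq r < 2s - 1$ of Lemma \ref{L;Dstructure}(i) fails; when $(p+1)/2 < s < p$, Lemma \ref{L;Dstructure}(ii) lands in the middle case since $2s - p - 1 \leq r = 2s - p - 1 < s - 1$ (using $s < p$). Either way $S^{\nu} \sim D^{\nu}$, hence $S^{\nu} \cong D^{\nu}$, and Lemma \ref{L;YongModules}(i) upgrades this to $S^{\nu} \cong D^{\nu} \cong Y^{\nu}$.

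For part (ii), I will invoke Lemma \ref{R;Spechtfiltration}(ii) with $\lambda$ and $\nu$. Lemma \ref{L:pcore}(iii)--(iv) shows that $\kappa_p(\lambda)$ has two nonzero parts, hence $\kappa_p(\lambda) \neq (r) = \kappa_p((n))$. Reading off an abacus display for $(n - k, k)$, the two beads sit on runners $k \bmod p$ and $(n - k + 1) \bmod p$; under our congruence $n \equiv p + 2s - 1 \pmod{p^2}$ both beads of $\lambda$ (and of $\nu$) lie on the runner labelled $s \bmod p$, so any other two-part partition in the block of $\lambda$ must likewise have $k \equiv s \pmod p$. Since the ranges $s < k < p + s$ (for condition (b) of Lemma \ref{R;Spechtfiltration}) and $0 < k < s$ (for (c)) contain no such $k$, the partition $\nu$ is the unique intermediate partition in the block of $\lambda$. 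Finally, $Y^{\nu} \nmid M^{\lambda}$ by Theorem \ref{T;Henke} since $p \not\subseteq_p n - 2s$, so all hypotheses of Lemma \ref{R;Spechtfiltration}(ii) hold and we obtain the required short exact sequence. The main obstacle is purely bookkeeping --- the $p$-adic analysis in part (i) and the block-theoretic uniqueness in part (ii) --- but because the arithmetic mirrors Lemma \ref{L; YP2} exactly, no genuinely new ideas are required.
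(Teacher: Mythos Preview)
Your proof is correct and follows essentially the same approach as the paper, which simply says to mimic the proof of Lemma~\ref{L; YP2}. You carry out this mimicry accurately: the key arithmetic observation $n-2s\equiv p-1\pmod{p^2}$ (hence $p\not\subseteq_p n-2s$) is exactly what drives both the decomposition claim in (i) and the condition $Y^\nu\nmid M^\lambda$ in (ii). The only cosmetic difference is that for the uniqueness of $\nu$ in the block of $\lambda$ you use a direct $2$-bead abacus argument, whereas the paper (in the analogous Lemma~\ref{L; YP2}) simply cites Lemma~\ref{L:pcore}; both amount to the same computation.
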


\begin{lem}\label{L;p=3,YP3}
Let $n\equiv 2p+r\pmod {p^{2}}$ and $\lambda$, $\mu$ be the partitions $(n-p-s,p+s)$, $(n-s,s)$ respectively where $r=2s-p-1$. Take $E$ to be $E_{(n-2p-r)/p}$. Let $Y^{\lambda}{\downarrow_E}$ and $Y^{\mu}{\downarrow_E}$ have stable generic Jordan types $[1]^a$ and $[1]^b$ respectively. Then $2b<a$.
\end{lem}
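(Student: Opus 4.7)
The plan is to mirror the template of Lemma \ref{L;p=3,YP2}, handling $p=3$ separately from $3<p$. Throughout, set $\nu:=(n-p-s+1,p+s-1)$; Lemma \ref{L;YP3}(i) then gives $M^{\lambda}\cong Y^{\lambda}\oplus M$ with $M\mid M^{\nu}$, and $S^{\mu}\cong D^{\mu}\cong Y^{\mu}$. The group $E=E_{(n-2p-r)/p}$ has $a_{0}(E)=2p+r=p+2s-1$ fixed points and $a_{1}(E)=(n-2p-r)/p$ orbits of size $p$; I will feed these into Lemma \ref{PModules} whenever stable generic Jordan types of Young permutation modules are needed.

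For $3<p$, the crude bounds $a\geq m_{\lambda}-m_{\nu}$ (from Proposition \ref{P;sjt}(iii) applied to $M^{\lambda}\cong Y^{\lambda}\oplus M$ with $M\mid M^{\nu}$) and $b\leq m_{\mu}$ (since $Y^{\mu}\mid M^{\mu}$) reduce the target to $2m_{\mu}<m_{\lambda}-m_{\nu}$. Counting insertions of singletons and $p$-orbits into the second row of each partition via Lemma \ref{PModules} yields
\[
m_{\lambda}=\binom{p+2s-1}{s-1}+\tfrac{n-2p-r}{p}\binom{p+2s-1}{s},\quad m_{\nu}=\binom{p+2s-1}{s}+\tfrac{n-2p-r}{p}\binom{p+2s-1}{s-1},\quad m_{\mu}=\binom{p+2s-1}{s},
\]
so $m_{\lambda}-m_{\nu}=\bigl[\binom{p+2s-1}{s}-\binom{p+2s-1}{s-1}\bigr]\cdot\tfrac{n-3p-r}{p}$. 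Since the smallest admissible $n$ under $n\equiv 2p+r\pmod{p^{2}}$ and $n\geq 2(p+s)$ is $n=p^{2}+2p+r$, one has $\tfrac{n-3p-r}{p}\geq p-1$, and it is enough to verify $2\binom{p+2s-1}{s}<(p-1)\bigl[\binom{p+2s-1}{s}-\binom{p+2s-1}{s-1}\bigr]$. Using $\binom{p+2s-1}{s-1}/\binom{p+2s-1}{s}=s/(p+s)$, this collapses to $2s<p(p-3)$, which holds for all $p\geq 5$ and every admissible $s\leq p-1$.

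For $p=3$, the constraints $r=2s-p-1\geq 0$ and $s<p$ force $s=2$, $r=0$, and thus $\lambda=(n-5,5)$, $\nu=(n-4,4)$, $\mu=(n-2,2)$ with $n\equiv 6\pmod 9$ (so $3\mid n$ and $n\geq 15$). Lemma \ref{L;p|n(n-2,2)} applied to $Y^{\mu}=S^{(n-2,2)}$ gives $b=\binom{6}{2}-6=9$. Theorem \ref{T;Henke}, after a base-$3$ Lucas check (the last two digits of $n,n-2,n-4,n-6,n-8$ are $20,11,02,00,21$ respectively), yields
\[
M^{\lambda}\cong Y^{\lambda}\oplus Y^{\nu}\oplus Y^{(n-1,1)},\qquad M^{\nu}\cong Y^{\nu}\oplus Y^{\mu}\oplus Y^{(n-1,1)}.
\]
Subtracting these two identities cancels the common contribution of $Y^{\nu}\oplus Y^{(n-1,1)}$, and Proposition \ref{P;sjt}(iii) together with Lemma \ref{PModules} (which gives $m_{\lambda}=6+5(n-6)$, $m_{\nu}=15+2(n-6)$) produces $a-b=m_{\lambda}-m_{\nu}=3(n-9)$, hence $a=3(n-6)\geq 27>18=2b$.

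The delicate step is the $p=3$ case: the naive estimate used for $3<p$ only gives $a\geq m_{\lambda}-m_{\nu}=3(n-9)$, which coincides with $2b=18$ exactly at $n=15$, so that bound alone is too weak. What saves the argument is that $M^{\lambda}$ and $M^{\nu}$ decompose into the same three Young modules except that $Y^{\nu}$ is replaced by $Y^{\mu}$; subtracting therefore produces the sharper identity $a=m_{\lambda}-m_{\nu}+b$. The combinatorial check ruling out $Y^{(n-3,3)}$, $Y^{(n)}$ and the other candidates as summands of $M^{\lambda}$ and $M^{\nu}$ is the only real bookkeeping.
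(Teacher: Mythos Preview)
Your proof is correct and follows essentially the same approach as the paper, which simply instructs the reader to mirror the proof of Lemma~\ref{L;p=3,YP2}: separate the $p=3$ case (where $s=2$, $r=0$, explicit Young-module decompositions give $a=3n-18$ and $b=9$) from the $3<p$ case (where the bounds $a\geq m_{\lambda}-m_{\nu}$ and $b\leq m_{\mu}$ reduce everything to the elementary inequality $2s<p(p-3)$). Your write-up makes the parallel computations explicit and correctly notes that for $p=3$ the cruder bound $a\geq m_{\lambda}-m_{\nu}$ alone would just miss at $n=15$, which is why the sharper identity $a=b+(m_{\lambda}-m_{\nu})$ coming from the matching summands of $M^{\lambda}$ and $M^{\nu}$ is needed.
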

\begin{proof}
When $p=3$, we have $r=0$ and $s=2$. We get that $S^\mu\cong D^\mu$ and $b=9$ by Lemmas \ref{L;YP3} (i) and  \ref{L;p|n(n-2,2)}. A calculation, similar to the one of Lemma \ref{L;p=3,YP2}, shows that $a=3n-18$. The inequality holds as $n\equiv 6\pmod 9$. One can follow the proof of Lemma \ref{L;p=3,YP2} to write a similar proof for the case $3<p$. The lemma follows.
\end{proof}

We now begin to finish the proof of Theorem \ref{T;C}. The entire proof of Theorem \ref{T;C} will be divided into the following propositions.

\begin{prop}\label{P;Less p}
Let $2s\leq n$ and $\lambda:=(n-s,s)$ have $p$-weight $w$. Then $c_{\sym{n}}(D^{\lambda})=w$.
\end{prop}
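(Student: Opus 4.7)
The plan is to split according to whether $S^{\lambda}$ is simple. By Lemma \ref{L;Dstructure} either $S^{\lambda}\cong D^{\lambda}$, in which case Lemma \ref{L;YongModules} (iii) delivers $c_{\sym{n}}(D^{\lambda})=w$ at once, or $S^{\lambda}\sim D^{\lambda}+D^{\tau}$ for a single two-part partition $\tau$ of $n$ whose second part is strictly smaller than $s$. A second application of Lemma \ref{L;Dstructure}, now to $\tau$, shows that $S^{\tau}\cong D^{\tau}$, so Lemma \ref{L;YongModules} (i) upgrades this to $S^{\tau}\cong D^{\tau}\cong Y^{\tau}$. Any partition $\rho$ with $\lambda\unlhd\rho$ has at most two parts (since $\rho_{1}+\rho_{2}\ge n$ forces $\rho_{3}=0$), so the dominance interval $[\lambda,(n)]$ is made up entirely of two-part partitions, which makes the block-theoretic conditions of Lemma \ref{R;Spechtfiltration} checkable directly from Lemma \ref{L:pcore}.

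When $\tau=\mu:=(n-k,k)$ with $k:=r-s+1$---this covers case (i) of Lemma \ref{L;Dstructure} and case (ii) subcase (c)---I would run a dimension argument. From $(2.4)$, $\dim_{\F}D^{\mu}=\binom{n}{k}-\binom{n}{k-1}$; Theorem \ref{T; Lucas} together with the identities $r-k=s-1$ and $r-k+1=s$ reduces the computation to
\[
\dim_{\F}D^{\lambda}\equiv 2\!\left(\binom{r}{s}-\binom{r}{s-1}\right)\equiv\frac{2(r-2s+1)}{s}\binom{r}{s-1}\pmod{p}.
\]
In the relevant parameter ranges each of $r-2s+1$, $s$ and $\binom{r}{s-1}$ is a nonzero integer of absolute value less than $p$, so $p\nmid\dim_{\F}D^{\lambda}$, and the general principle recalled at the end of Section~2.2 yields $c_{\sym{n}}(D^{\lambda})=w$.

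The remaining situation is case (ii) subcase (a) of Lemma \ref{L;Dstructure}: $3<p$, $(p+1)/2<s<p$ and $0\le r<2s-p-1$, with $\tau=\nu:=(n-j,j)$ and $j:=p+r-s+1$. Since $r<s-1<j$, Theorem \ref{T; Lucas} forces both $\dim_{\F}S^{\lambda}$ and $\dim_{\F}D^{\nu}$ to vanish modulo $p$ and the dimension shortcut collapses. I would instead verify the hypotheses of Lemma \ref{R;Spechtfiltration} (ii): Lemma \ref{L:pcore} gives $\kappa_{p}(\lambda)=\kappa_{p}(\nu)=(s-1,p+r-s+1)\ne(r)=\kappa_{p}((n))$, a routine case check via Lemma \ref{L:pcore} rules out any two-part partition strictly between $\lambda$ and $\nu$ or between $\nu$ and $(n)$ in the same block, and comparing $0$-th $p$-adic digits shows $s-j\not\subseteq_{p}n-2j$, so $Y^{\nu}\nmid M^{\lambda}$ by Theorem \ref{T;Henke}. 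This produces the short exact sequence
\[
0\longrightarrow S^{\lambda}\longrightarrow Y^{\lambda}\longrightarrow S^{\nu}\longrightarrow 0.
\]
Working out the $p$-adic expansions (using $s<p$, which kills the second parts of $\lambda(i)$ for $i\ge 1$, and giving $a_{0}=r-s+2p$ for $\lambda$ and $a_{0}=s-1$ for $\nu$) and invoking Theorem \ref{T; Youngvertices} shows that the vertex of $Y^{\lambda}$ has $p$-rank $w-1$ while the vertex of $Y^{\nu}$ has $p$-rank $w$; matching orbit structures via Theorem \ref{T;Maximal subgroup} then places $E_{w}$ inside the vertex of $Y^{\nu}$ but not of $Y^{\lambda}$, up to $\sym{n}$-conjugation.

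Lemma \ref{L;YongModules} (ii) now gives that $Y^{\lambda}\downarrow_{E_{w}}$ is generically free while $Y^{\nu}\downarrow_{E_{w}}$ has stable generic Jordan type $[1]^{b}$ with some $b\ge 1$. Proposition \ref{P;sjt} (iv) (b) applied to the short exact sequence above forces $S^{\lambda}\downarrow_{E_{w}}$ to have the complementary stable generic Jordan type $[p-1]^{b}$. Finally, since $D^{\lambda}$ is the head of $S^{\lambda}$ and $D^{\nu}$ is the only other composition factor (with multiplicity one by Theorem \ref{T;James}), the radical of $S^{\lambda}$ is isomorphic to $D^{\nu}$, yielding $0\to D^{\nu}\to S^{\lambda}\to D^{\lambda}\to 0$; were $D^{\lambda}\downarrow_{E_{w}}$ generically free, Proposition \ref{P;sjt} (iv) (c) would force $D^{\nu}\downarrow_{E_{w}}\cong S^{\nu}\downarrow_{E_{w}}$ and $S^{\lambda}\downarrow_{E_{w}}$ to share the same stable generic Jordan type, giving the impossible equation $[1]^{b}=[p-1]^{b}$ (as $p>2$ and $b\ge 1$). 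Hence $D^{\lambda}\downarrow_{E_{w}}$ is not generically free, and Proposition \ref{P;sjt} (vii) concludes $c_{\sym{n}}(D^{\lambda})=w$. The most delicate step will be the careful bookkeeping with $p$-adic expansions needed to pin down the two vertex $p$-ranks as exactly $w-1$ and $w$, respectively.
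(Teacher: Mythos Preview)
Your proposal is correct and follows essentially the same strategy as the paper: the trivial case $S^{\lambda}\cong D^{\lambda}$ via Lemma \ref{L;YongModules} (iii), a dimension count modulo $p$ when the extra composition factor is $D^{\mu}$ with $\mu=(n-r+s-1,r-s+1)$, and the short exact sequence $0\to S^{\lambda}\to Y^{\lambda}\to S^{\nu}\to 0$ combined with a generic Jordan type contradiction over $E_{w}$ when the extra factor is $D^{\nu}$ with $\nu=(n-p-r+s-1,p+r-s+1)$. The only cosmetic differences are that the paper records the vertices directly as Sylow $p$-subgroups of $\sym{n-2p-r}$ and $\sym{n-p-r}$ rather than computing their $p$-ranks from the $p$-adic expansion, and writes the dimension congruence as $\frac{2(r-2s+1)\,r!}{(r-s+1)!\,s!}$ instead of $\frac{2(r-2s+1)}{s}\binom{r}{s-1}$, which is the same quantity.
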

\begin{proof}
The situation $S^{\lambda}\cong D^{\lambda}$ is trivial by Lemma \ref{L;YongModules} (iii). Ruling out the trivial situation by Lemma \ref{L;Dstructure}, we deal with the left situations in the following cases.
\begin{enumerate}[\text{Case} 1:]
\item $0<s\leq\frac{p+1}{2}$, $s-1\leq r<2s-1$ or $3<p$, $\frac{p+1}{2}<s<p$, $s-1\leq r\leq p-1$
\end{enumerate}
Note that $r<2s-1$ by the conditions of the case. In the case, denote the partition $(n-r+s-1,r-s+1)$ by $\mu$ and note that $S^{\lambda}\sim D^{\lambda}+D^{\mu}$, $S^{\mu}\cong D^{\mu}$ by Lemma \ref{L;Dstructure}. Using the relations, $(2.4)$ and Theorem \ref{T; Lucas}, we have $\dim_\F D^{\lambda}\equiv\frac{2(r-2s+1)r!}{(r-s+1)!s!}\pmod p$. Observe that $r-2s+1<0$ since $r<2s-1$. As $s-1\leq r$, we also have $2s-1=s+s-1<p+r$, which means that $-p<r-2s+1$. We thus conclude that $p\nmid \dim_\F D^{\lambda}$ and $c_{\sym{n}}(D^{\lambda})=w$.
\begin{enumerate}[\text{Case} 2:]
\item $3<p$, $\frac{p+1}{2}<s<p$, $0\leq r<2s-p-1$
\end{enumerate}
Take $E$ to be $E_{(n-p-r)/p}$ and note that $w=\frac{n-p-r}{p}$ by Lemma \ref{L:pcore}. In the case, denote the partition $(n-p-r+s-1,p+r-s+1)$ by $\nu$. By Lemma \ref{L:pcore}, observe that $\kappa_p(\lambda)\neq \kappa_p((n))$ and $\nu$ is the unique partition of $n$ such that $\lambda\lhd\nu\lhd (n)$ and $\kappa_p(\nu)=\kappa_p(\lambda)$. Using the inequality $0\leq r<2s-p-1$ and Theorem \ref{T;Henke}, we have $Y^{\nu}\nmid M^{\lambda}$. By Lemma \ref{R;Spechtfiltration} (ii), these facts imply that there exists a short exact sequence $0\rightarrow S^{\lambda}\rightarrow Y^{\lambda}\rightarrow S^{\nu}\rightarrow 0$. By Lemmas \ref{L;Dstructure} and \ref{L;YongModules} (i), we also have $S^{\lambda}\sim D^{\lambda}+D^{\nu}$ and $S^{\nu}\cong D^{\nu}\cong Y^{\nu}$.
Note that the vertices of $Y^{\lambda}$ are $\sym{n}$-conjugate to the Sylow $p$-subgroups of $\sym{n-2p-r}$ and the vertices of $Y^{\nu}$ are $\sym{n}$-conjugate to the Sylow $p$-subgroups of $\sym{n-p-r}$ by Theorem \ref{T; Youngvertices} and Lemma \ref{L:pcore}. Moreover, we know that $E$ is not contained in $\sym{n-2p-r}$ up to $\sym{n}$-conjugation. We thus deduce that $Y^{\lambda}{\downarrow_E}$ is generically free while $Y^{\nu}{\downarrow_E}$ is not generically free by Lemma \ref{L;YongModules} (ii). Let $Y^\nu{\downarrow_E}$ have stable generic Jordan type $[1]^a$ where $0<a$. Suppose that $D^{\lambda}{\downarrow_E}$ is generically free. We apply Proposition \ref{P;sjt} (iv) (b) to the short exact sequence $0\rightarrow S^{\lambda}{\downarrow_E}\rightarrow Y^{\lambda}{\downarrow_E}\rightarrow S^{\nu}{\downarrow_E}\rightarrow 0$ and get that $S^{\lambda}{\downarrow_E}$ has stable generic Jordan type $[p-1]^{a}$. However, we can also apply  Proposition \ref{P;sjt} (iv) (c) to the short exact sequence $0\rightarrow D^{\nu}{\downarrow_E}\rightarrow S^{\lambda}{\downarrow_E}\rightarrow D^{\lambda}{\downarrow_E}\rightarrow 0$ and get that $S^{\lambda}{\downarrow_E}$ has stable generic Jordan type $[1]^{a}$. It is a contradiction. We therefore conclude that $D^{\lambda}{\downarrow_E}$ is not generically free. The proposition follows by applying Proposition \ref{P;sjt} (vii) to $D^{\lambda}{\downarrow_E}$.
\end{proof}

When $2p\leq n$, we turn to determine $c_{\sym{n}}(D^{(n-p,p)})$. Note that a partition $(n-m,m)$ has $p$-weight $\frac{n-r}{p}$ if $p\mid m$.

\begin{lem}\label{L;Pblock1}
Let $2p\leq n$, $r=0$ and $\lambda:=(n-p,p)$. Then $c_{\sym{n}}(D^{\lambda})=\frac{n}{p}$.
\end{lem}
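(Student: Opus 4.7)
The partition $\lambda=(n-p,p)$ has $p$-core $\varnothing$ and $p$-weight $w:=n/p$. Take $E:=E_{n/p}$; then the $p$-rank of $E$ matches that of the defect groups of the $p$-block containing $D^\lambda$, so $c_{\sym{n}}(D^\lambda)\leq w$ is automatic. By Proposition \ref{P;sjt} (vii), it suffices to show $D^\lambda{\downarrow_E}$ is not generically free.

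The first step is to read off the composition factors of $S^\lambda$ from Theorem \ref{T;James}: when $p\mid n$, the condition $p-s\sqsubseteq_p n-2s+1$ forces $s\in\{p,1\}$, together with $s=0$ precisely when $n\equiv p\pmod{p^2}$. Hence $S^\lambda$ has composition factors $D^\lambda$ and $D^{(n-1,1)}$, together with $D^{(n)}$ exactly when $n\equiv p\pmod{p^2}$. Theorem \ref{T; Lucas} together with (2.4) yields $\dim_\F S^\lambda\equiv n/p\pmod p$, and since $\dim_\F D^{(n-1,1)}=n-2\equiv -2\pmod p$ and $\dim_\F D^{(n)}=1$, a direct subtraction gives
\[ \dim_\F D^\lambda\equiv\begin{cases} n/p+2\pmod p, & n\not\equiv p\pmod{p^2},\\ n/p+1\pmod p, & n\equiv p\pmod{p^2}.\end{cases}\]
This is non-zero mod $p$ except in the single residual case $p\geq 5$ and $n\equiv (p-2)p\pmod{p^2}$; in every other subcase the general fact recalled in the introduction forces $c_{\sym{n}}(D^\lambda)=w$.

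For the residual case, Lemma \ref{L; YP1} applied with $i=p-2$ supplies the short exact sequence $0\to S^\lambda\to Y^\lambda\to S^{(n-1,1)}\to 0$, together with the stable generic Jordan type $[1]^{n/p-1}$ of $Y^\lambda{\downarrow_E}$; Lemma \ref{L;(n-1,1)sjt} gives stable type $[p-1]^1$ for $S^{(n-1,1)}{\downarrow_E}$ because $E$ acts on $[n]$ without fixed points. I would then assume for contradiction that $D^\lambda{\downarrow_E}$ is generically free. Since only $D^\lambda$ and $D^{(n-1,1)}$ occur in the composition series in this subcase, the Loewy filtration gives $0\to D^{(n-1,1)}\to S^\lambda\to D^\lambda\to 0$, and Proposition \ref{P;sjt} (iv) (c) identifies the stable generic Jordan types of $S^\lambda{\downarrow_E}$ and $D^{(n-1,1)}{\downarrow_E}$. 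Applying Theorem \ref{InsertionRule} to the dualised sequence $0\to D^{(n)}\to S^{(n-1,1)*}\to D^{(n-1,1)*}\to 0$ (where $D^{(n)}{\downarrow_E}$ is the single Jordan block $[1]$) constrains the stable type of $D^{(n-1,1)}{\downarrow_E}$ to one of two candidates, while applying Theorem \ref{InsertionRule} to the Lemma \ref{L; YP1} sequence (after first splitting off the $[p]$-summands and then the $[p-1]$-summand of $S^{(n-1,1)}{\downarrow_E}$) constrains the stable type of $S^\lambda{\downarrow_E}$. The goal is to check that no admissible stable type for $S^\lambda{\downarrow_E}$ coincides with any admissible stable type for $D^{(n-1,1)}{\downarrow_E}$, producing the desired contradiction.

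The hard part is this final comparison: because none of $S^\lambda{\downarrow_E}$, $Y^\lambda{\downarrow_E}$, $S^{(n-1,1)}{\downarrow_E}$ is generically free, Proposition \ref{P;sjt} (iv) offers no direct control of stable types across the Lemma \ref{L; YP1} sequence, and I must rely entirely on the enumeration from Theorem \ref{InsertionRule}'s Case (ii). Supplementary arithmetic (dimensions modulo $p$, or a secondary computation using the decomposition of $M^\lambda$ into Young modules via Theorem \ref{T;Henke} together with Lemma \ref{L;YongModules} (ii) to locate the generically free Young summands on $E$) will likely be needed to resolve the residual ambiguity between the two candidates.
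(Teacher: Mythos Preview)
Your dimension-counting for the non-residual subcases matches the paper exactly. For the residual case $p>3$, $n\equiv(p-2)p\pmod{p^2}$, your approach is correct but differs from the paper's, and your hesitation at the end is unnecessary: the enumeration finishes cleanly. Dualising the Lemma~\ref{L; YP1} sequence to $0\to S_{(n-1,1)}\to Y^\lambda\to S_\lambda\to 0$, splitting off the free $[p]$-blocks, and then peeling off the single $[p-1]$-block via Theorem~\ref{InsertionRule} shows that the stable generic Jordan type of $S^\lambda{\downarrow_E}$ lies in $\{[1]^{n/p},\,[1]^{n/p-2}[2]^1\}$. The sequence $0\to D^{(n)}\to S^{(n-1,1)}\to D^{(n-1,1)}\to 0$ (no dualisation needed here---$D^{(n)}$ already sits on the left) places the stable type of $D^{(n-1,1)}{\downarrow_E}$ in $\{[p-2]^1,\,[p-1]^2\}$. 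Since $p\geq 5$ forces $p-2\geq 3$, the two candidate sets are disjoint and the contradiction follows at once, with no supplementary arithmetic. This is exactly the template the paper itself deploys later in Lemma~\ref{L;Base Induction} for the partition $(n-p-1,p+1)$.

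For this lemma, however, the paper handles the residual case by a genuinely different device: it induces $D^{(n-p-1,p)}$ from $\sym{n-1}$ to $\sym{n}$ and combines the Specht branching rule with Kleshchev's modular branching rule to identify a self-dual indecomposable summand $N$ of the induced module having head and socle $D^\lambda$ and heart $D^{(n-1,1)}\oplus D^{(n-p-1,p+1)}$. Mackey's formula together with Proposition~\ref{P;sjt}~(v) makes $N{\downarrow_E}$ generically free; if $D^\lambda{\downarrow_E}$ were also generically free, two passes of Proposition~\ref{P;sjt}~(iv) through the Loewy filtration of $N$ would force the heart to be generically free, contradicting $p\nmid\dim_\F D^{(n-1,1)}$. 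This argument avoids all Jordan-type bookkeeping and is recycled verbatim in Lemma~\ref{L;Dimension(n-2p,2p)}; your route trades that structural input for a short computation you already have the tools to complete.
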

\begin{proof}
Denote the partition $(n-1,1)$ by $\mu$. By Corollary \ref{L;Findpcore} and Theorem \ref{T;James}, we have
\begin{align*}
\tag{4.1}S^{\lambda}\sim \begin{cases} D^{\lambda}+D^{\mu}+D^{(n)}, &\text{if}\ n\equiv p\pmod {p^{2}},\\
D^{\lambda}+D^{\mu}, & \text{otherwise}.\end{cases}
\end{align*}
Let $i$ be an integer such that $0\leq i\leq p-1$. By $(4.1)$, $(2.4)$, Theorem \ref{T; Lucas},
\begin{align*}
\dim_\F D^{\lambda}\equiv\begin{cases} 2 \ \ \ \ \ \pmod p, &\text{if}\ n\equiv p\ \pmod {p^{2}},\\
i+2 \pmod p, &\text{if}\ n\equiv ip \pmod {p^{2}},\ i\neq 1. \end{cases}
\end{align*}
So we get that $p\nmid \dim_\F D^{\lambda}$ and $c_{\sym{n}}(D^{\lambda})=\frac{n}{p}$ unless $3<p$ and $n\equiv (p-2)p \pmod {p^{2}}$.

When $3<p$ and $n\equiv (p-2)p \pmod {p^{2}}$, take $E$ to be $E_{n/p}$ and denote the partitions $(n-p-1,p)$, $(n-p-1,p+1)$, $(n-p-1,p,1)$ by $\nu$, $\eta$, $\zeta$ respectively. From Corollary \ref{L;Findpcore} and Theorem \ref{T;James}, we know $S^{\nu}\cong D^{\nu}$, $S^{\lambda}\sim D^{\lambda}+D^{\mu}$, $S^{\eta}\sim D^{\eta}+D^{\lambda}$. By Branching Rule and block decomposition, we have $D^{\nu}{\uparrow^{\sym{n}}}\cong S^{\nu}{\uparrow^{\sym{n}}}\cong M\oplus S^{\zeta}$ where $M\sim S^{\lambda}+S^{\eta}$. Using \cite[Theorem 11.2.8]{A.K}, one obtains $D^{\nu}{\uparrow^{\sym{n}}}\cong N\oplus D^{\zeta}$, where $N$ is a self-dual and indecomposable $\F\sym{n}$-module whose head and socle are isomorphic to $D^{\lambda}$. Note that $S^\zeta\cong D^\zeta$ by \cite[Theorem 7.3.23]{GJ3}. We deduce that $M\cong N$ from the isomorphic formulae of $D^\nu{\uparrow^{\sym{n}}}$ by Krull-Schmidt Theorem. Therefore, we can conclude that $N$ has three Loewy layers and its heart is isomorphic to $D^{\mu}\oplus D^{\eta}$.
Note that $D^\mu{\downarrow_E}$ is not generically free by counting dimension and Proposition \ref{P;sjt} (i). Denote $D^{\mu}\oplus D^{\eta}$ by $D$ and deduce from Proposition \ref{P;sjt} (iii) that $D{\downarrow_E}$ is not generically free. One also observes that $D^{\nu}{\uparrow^{\sym{n}}}{\downarrow_E}$ is generically free by Mackey Formula and Proposition \ref{P;sjt} (iii), (v). The fact implies that $N{\downarrow_E}$ is generically free by Proposition \ref{P;sjt} (iii). Now define $R$ to be $\text{Rad}(N)$ and suppose that $D^{\lambda}{\downarrow_E}$ is generically free. We can apply Proposition \ref{P;sjt} (iv) (c) to the short exact sequence $ 0\rightarrow R{\downarrow_E}\rightarrow N{\downarrow_E}\rightarrow D^{\lambda}{\downarrow_E}\rightarrow 0$ and get that $R{\downarrow_E}$ is generically free. We thus apply Proposition \ref{P;sjt} (iv) (a) to the short exact sequence $0\rightarrow D^{\lambda}{\downarrow_E}\rightarrow R{\downarrow_E}\rightarrow D{\downarrow_E}\rightarrow 0$ and deduce that $D{\downarrow_E}$ is generically free. However, it is a contradiction since we have known that $D{\downarrow_E}$ is not generically free. Therefore, $D^{\lambda}{\downarrow_E}$ is not generically free, which implies that $c_{\sym{n}}(D^\lambda)=\frac{n}{p}$ by Proposition \ref{P;sjt} (vii). The lemma follows.
\end{proof}
\begin{lem}\label{L;Sdecomposition}
Let $m$ be a non-negative integer and let $\lambda$ and $\mu$ be the partitions $(n-mp,mp)$, $(n-mp-r,mp)$ respectively. Then $D^{\mu}\mid D^\lambda{\downarrow_{\sym{n-r}}}$ if $0<r<p-1$.
\end{lem}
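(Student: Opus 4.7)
The plan is to iterate the modular branching rule (Theorem \ref{T;MBR}) along the chain $\sym{n}\supset \sym{n-1}\supset\cdots\supset\sym{n-r}$, peeling off one box at a time from the first row of $\lambda$. The trivial case $m=0$ gives $D^\lambda=D^{(n)}=\F$, whose restriction to $\sym{n-r}$ is $\F=D^{(n-r)}=D^\mu$, so from now on assume $m\ge 1$.

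For $0\le i\le r$, set $\lambda_i:=(n-mp-i,mp)$, so that $\lambda_0=\lambda$ and $\lambda_r=\mu$. At the $i$-th step I would restrict $D^{\lambda_{i-1}}$ from $\sym{n-i+1}$ down to $\sym{n-i}$ via Theorem \ref{T;MBR} with $a=n-mp-i+1$ and $b=mp$, and verify that $D^{\lambda_i}=D^{(a-1,b)}$ appears as a summand. The theorem as stated directly provides $D^{(a-1,b)}$ as a summand in both listed cases (the split case $b-2\not\equiv a-1,a\pmod p$ and the case $b-2\equiv a\pmod p$); the only scenario in which $D^{(a-1,b)}$ can be absent is the remaining congruence $b-2\equiv a-1\pmod p$, equivalent to $a\equiv -1\pmod p$.

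I will check that this obstruction never arises. Since $n\equiv r\pmod p$, we have $a=n-mp-i+1\equiv r-i+1\pmod p$, so $a\equiv -1\pmod p$ would force $i\equiv r+2\pmod p$. The hypothesis $0<r<p-1$ yields $r+2\le p$, while $1\le i\le r$ gives $i\le r<r+2$, ruling this congruence out for every admissible $i$. I also need the hypothesis $0<b<a$ of Theorem \ref{T;MBR}: clearly $b=mp>0$, and since $\mu$ is a valid partition, $n-mp-r\ge mp$, which implies $a\ge mp+1>b$ for all $1\le i\le r$.

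Putting these together, each restriction $D^{\lambda_{i-1}}{\downarrow_{\sym{n-i}}}$ admits $D^{\lambda_i}$ as a direct summand, so by transitivity of restriction and the Krull--Schmidt theorem, $D^\mu\mid D^\lambda{\downarrow_{\sym{n-r}}}$. The only delicate point is pinning down the congruence that avoids the unwritten third case of Sheth's theorem; once that numerical check is dispatched, the rest is a routine induction.
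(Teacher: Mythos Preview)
Your proof is correct and follows essentially the same approach as the paper's: both iterate Sheth's branching rule (Theorem~\ref{T;MBR}) down the chain $\sym{n}\supset\cdots\supset\sym{n-r}$, removing one box from the first row at each step. Your version is slightly more streamlined---you observe that $D^{(a-1,b)}$ appears in both stated cases of Theorem~\ref{T;MBR}, so only the single congruence $b-2\equiv a-1\pmod p$ must be excluded, whereas the paper verifies both non-congruences and therefore splits off the subcase $r=p-2$ (where the first restriction lands in the second branch of Theorem~\ref{T;MBR}) before reducing to $0<r<p-2$.
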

\begin{proof}
The case $m=0$ is trivial. The case where $p=3$ and $r=1$ is also clear from Theorem \ref{T;MBR}. For any integer $a$ satisfying condition $0\leq a\leq r$, denote the partition $(n-mp-a,mp)$ by $\lambda_a$. We deal with the left case where $3<p$ and $0<m$. When $0<r<p-2$, let $t$ be an integer such that $0\leq t<r$. We have $0<p-2-(r-t)<p-2$. The inequality implies that $p-2\not\equiv r-t-1\pmod p$ and $p-2\not\equiv r-t\pmod p$. The fact and Theorem \ref{T;MBR} say that $D^{\lambda_{t+1}}\mid D^{\lambda_t}{\downarrow_{\sym{n-t-1}}}$. Therefore, as $\lambda_0=\lambda$ and $\lambda_r=\mu$, we get $D^\mu\mid D^\lambda{\downarrow_{\sym{n-r}}}$ by induction. When $r=p-2$, by Theorem \ref{T;MBR}, we have $D^\lambda{\downarrow_{\sym{n-1}}}\cong D^{\lambda_1}$. Note that $n-1\equiv r-1=p-3 \pmod p$. We get $D^{\mu}\mid D^{\lambda_1}{\downarrow_{\sym{n-r}}}$ by the situation $0<r<p-2$. The lemma follows by noting that $D^\lambda{\downarrow_{\sym{n-r}}}\cong D^{\lambda_1}{\downarrow_{\sym{n-r}}}$.
\end{proof}

\begin{prop}\label{P;equal p}
Let $2p\leq n$ and $\lambda:=(n-p,p)$. Then $c_{\sym{n}}(D^{\lambda})=\frac{n-r}{p}$.
\end{prop}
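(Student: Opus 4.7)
The plan is to match the upper bound $c_{\sym{n}}(D^{(n-p,p)}) \leq (n-r)/p$, which comes for free from the fact that $(n-p,p)$ has $p$-core $(r)$ and hence $p$-weight $(n-r)/p$, with a matching lower bound proved by cases on $r$. The case $r = 0$ is precisely Lemma \ref{L;Pblock1}, so I assume $0 < r < p$ throughout.

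For $0 < r < p-1$ I plan to use Lemma \ref{L;Sdecomposition} to obtain $D^{(n-p-r,p)} \mid D^{(n-p,p)}\downarrow_{\sym{n-r}}$. Since $p \mid n-r$, Lemma \ref{L;Pblock1} applied inside $\sym{n-r}$ (with $n$ replaced by $n-r$) yields $c_{\sym{n-r}}(D^{(n-p-r,p)}) = (n-r)/p$. Picking a maximal elementary abelian $p$-subgroup $E$ of $\sym{n-r}$ that achieves this complexity, Proposition \ref{P;sjt}(vi) says $D^{(n-p-r,p)}\downarrow_E$ is not generically free, and since it is a direct summand of $D^{(n-p,p)}\downarrow_E$, Proposition \ref{P;sjt}(iii) shows that the containing module is not generically free either. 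Proposition \ref{P;sjt}(vii) then gives $c_{\sym{n}}(D^{(n-p,p)}) \geq (n-r)/p$, closing the case.

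The remaining case $r = p-1$ is the main obstacle, since Lemma \ref{L;Sdecomposition} explicitly requires $r < p-1$, so I plan to bypass the restriction argument. I first determine the composition factors of $S^{(n-p,p)}$ via Theorem \ref{T;James} by checking $p - s \sqsubseteq_p n - 2s + 1$ for each $0 \leq s \leq p$: at the position-zero digit the left side is $p-s \neq 0$ for $1 \leq s \leq p-1$, while the right side is either $p-2s$ or $2(p-s)$ (according as $2s \leq p$ or $2s > p$), neither equal to $p-s$; for $s = 0$ the condition collapses to $(n+1)_1 = 1$, i.e.\ $n \equiv p-1 \pmod{p^2}$; and for $s = p$ it is automatic. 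Thus $S^{(n-p,p)} \cong D^{(n-p,p)}$ when $n \not\equiv p-1 \pmod{p^2}$, and $S^{(n-p,p)} \sim D^{(n-p,p)} + D^{(n)}$ otherwise.

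In the first sub-subcase, Lemma \ref{L;YongModules}(iii) immediately gives $c_{\sym{n}}(D^{(n-p,p)}) = (n-r)/p$, since $p > 2$. In the second sub-subcase, the base-$p$ digits of $n$ are $(p-1, 0, n_2, n_3, \ldots)$, so $(2.4)$ and Lucas' Theorem (Theorem \ref{T; Lucas}) give $\dim_\F S^{(n-p,p)} = \binom{n}{p} - \binom{n}{p-1} \equiv 0 - 1 \equiv -1 \pmod p$; subtracting the contribution $\dim_\F D^{(n)} = 1$ yields $\dim_\F D^{(n-p,p)} \equiv -2 \not\equiv 0 \pmod p$ (using $p > 2$). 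The preliminary remark that an indecomposable module with dimension prime to $p$ has complexity equal to the $p$-rank of the defect groups of its block then finishes the proof.
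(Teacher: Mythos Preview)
Your proof is correct and follows essentially the same approach as the paper's: the case $r=0$ is Lemma~\ref{L;Pblock1}, the case $0<r<p-1$ goes through Lemma~\ref{L;Sdecomposition} and non-generic-freeness over a suitable $E$, and the case $r=p-1$ splits via Theorem~\ref{T;James} into $S^\lambda\cong D^\lambda$ (handled by Lemma~\ref{L;YongModules}(iii)) and a dimension computation modulo $p$. The only cosmetic difference is that for $0<r<p-1$ the paper works directly with $E=E_{(n-r)/p}$ and invokes the \emph{proof} of Lemma~\ref{L;Pblock1} to see that $D^{(n-p-r,p)}{\downarrow_E}$ is not generically free, whereas you extract the same conclusion from the \emph{statement} of Lemma~\ref{L;Pblock1} via the dimension-of-rank-variety argument; both are valid.
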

\begin{proof}
Denote the partition $(n-p-r,p)$ by $\mu$. We are left with the case $r\neq 0$ by Lemma \ref{L;Pblock1}. When $0<r<p-1$, take $E$ to be $E_{(n-r)/p}$ and note that $\sym{n-r}$ contains $E$. By Lemma \ref{L;Sdecomposition}, we have $D^\mu{\downarrow_E}\mid D^\lambda{\downarrow_E}$.
The proof of Lemma \ref{L;Pblock1} implicitly says that $D^\mu{\downarrow_E}$ is not generically free. Therefore, We deduce that $D^{\lambda}{\downarrow_E}$ is not generically free by Proposition \ref{P;sjt} (iii). We get $c_{\sym{n}}(D^\lambda)=\frac{n-r}{p}$ by applying Proposition \ref{P;sjt} (vii) to $D^\lambda{\downarrow_E}$.

When $r=p-1$, using Corollary \ref{L;Findpcore} and Theorem \ref{T;James}, we have $S^{\lambda}\sim D^{\lambda}+D^{(n)}$ if $n\equiv p-1 \pmod {p^{2}}$ and $S^{\lambda}\cong D^{\lambda}$ otherwise. The latter situation is trivial by Lemma \ref{L;YongModules} (iii). When $n\equiv p-1 \pmod {p^{2}}$, by $(2.4)$ and Theorem \ref{T; Lucas}, we have $\dim_\F D^{\lambda}\equiv p-2 \pmod p$ and $c_{\sym{n}}(D^{\lambda})=\frac{n-p+1}{p}$. This completes the proof.
\end{proof}

\begin{lem}\label{L;Base Induction}
Let $2p+2<n$, $r=0$ and $\lambda:=(n-p-1,p+1)$. Then $c_{\sym{n}}(D^{\lambda})=\frac{n}{p}$.
\end{lem}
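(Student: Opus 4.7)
The plan is to show that $D^\lambda \downarrow_E$ is not generically free for the elementary abelian $p$-subgroup $E := E_{n/p}$, which by Proposition \ref{P;sjt} (vii) gives $c_{\sym{n}}(D^\lambda) \geq n/p$; the reverse inequality holds because the $p$-block containing $\lambda$ has $p$-core $\varnothing$ and hence defect groups of $p$-rank $n/p$.

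First I would apply Theorem \ref{T;James} to enumerate the composition factors of $S^\lambda$: in every subcase $D^\lambda$ and $D^{(n-p,p)}$ appear, with $D^{(n)}$ appearing additionally iff $n \equiv p \pmod{p^2}$ and $D^{(n-1,1)}$ iff $n \equiv 2p \pmod{p^2}$ (these two situations being mutually exclusive). Combining $(2.4)$ with Lucas's theorem and the dimensions of the smaller factors computed as in the proof of Lemma \ref{L;Pblock1}, a direct calculation shows $p \nmid \dim_\F D^\lambda$ except precisely when $n \equiv (p-1)p \pmod{p^2}$, and in the non-exceptional situations the equality $c_{\sym{n}}(D^\lambda) = n/p$ follows at once from the well-known attainment of the defect-group upper bound whenever $p \nmid \dim_\F M$.

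The main obstacle is the residual case $n \equiv (p-1)p \pmod{p^2}$, which only arises for $p \geq 5$. There $S^\lambda \sim D^\lambda + D^{(n-p,p)}$ has only two composition factors, so there is a short exact sequence $0 \to D^{(n-p,p)} \to S^\lambda \to D^\lambda \to 0$. My strategy is to prove that $S^\lambda \downarrow_E$ is generically free; Proposition \ref{P;sjt} (iv) (b) will then force $D^{(n-p,p)} \downarrow_E$ and $D^\lambda \downarrow_E$ to have mutually complementary stable generic Jordan types, and since $D^{(n-p,p)} \downarrow_E$ is not generically free by the proof of Lemma \ref{L;Pblock1}, neither is $D^\lambda \downarrow_E$.

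To establish the generic freeness of $S^\lambda \downarrow_E$, set $\nu := (n-p-2, p+1)$, a $p$-regular partition of $n-1$. A further application of Theorem \ref{T;James} shows $S^\nu \cong D^\nu$ in this exceptional regime, as for $p \geq 5$ and $k := n/p$ with $k \equiv p-1 \pmod p$ every other candidate composition factor $D^{(n-1-s,s)}$ fails the $\sqsubseteq_p$ criterion. The standard Pieri-type branching rule for Specht modules then yields a filtration of $S^\nu \uparrow^{\sym{n}}$ whose quotient factors are $S^\lambda$ and $S^{(n-p-2,p+2)}$; Lemma \ref{L:pcore} gives $\kappa_p(\lambda) = \varnothing$ whereas $\kappa_p((n-p-2,p+2)) = (p-2,2) \neq \varnothing$, so these two Specht modules lie in distinct $p$-blocks and the filtration splits as $D^\nu \uparrow^{\sym{n}} \cong S^\nu \uparrow^{\sym{n}} \cong S^\lambda \oplus S^{(n-p-2,p+2)}$. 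The Mackey formula together with Proposition \ref{P;sjt} (iii), (v) now gives that $D^\nu \uparrow^{\sym{n}} \downarrow_E$ is generically free, since for each $g \in \sym{n}$ the intersection $(\sym{n-1})^g \cap E$ is a proper subgroup of $E$ (as $E$ acts freely on $[n]$), so every Mackey summand is generically free. Consequently the direct summand $S^\lambda \downarrow_E$ is generically free, which closes the argument.
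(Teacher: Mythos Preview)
Your treatment of the non-exceptional residues is fine and matches the paper. The gap is in the exceptional case $n\equiv (p-1)p\pmod{p^2}$: your central claim that $S^{\lambda}{\downarrow_E}$ is generically free is false, and the argument you give for it breaks at the branching step.

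Concretely, the induced Specht module $S^{(n-p-2,\,p+1)}{\uparrow^{\sym{n}}}$ has a filtration by \emph{three} Specht modules, not two: besides $S^{\lambda}$ and $S^{(n-p-2,\,p+2)}$ there is $S^{(n-p-2,\,p+1,\,1)}$, coming from the addable node in the third row. A quick abacus check (beta numbers $(k-1)p,\ p+2,\ 1$ for $n=kp$) shows $\kappa_p((n-p-2,p+1,1))=\varnothing=\kappa_p(\lambda)$, so this third factor lies in the \emph{same} block as $S^{\lambda}$. Hence the block projection does not isolate $S^{\lambda}$ as a direct summand, and your Mackey argument (which is itself correct and does show $S^{\nu}{\uparrow^{\sym{n}}}{\downarrow_E}$ is generically free) cannot be passed down to $S^{\lambda}{\downarrow_E}$. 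Incidentally, your side claim $S^{\nu}\cong D^{\nu}$ also fails: with $k=p+1$ and $t=p-1$ one has $k-t=2$ and $n-2t\equiv (p-3)p+2\pmod{p^2}$, so $2\sqsubseteq_p n-2t$ and $D^{(n-p,\,p-1)}$ is a composition factor of $S^{\nu}$.

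That the strategy cannot be repaired is visible from the paper's own computation: using \cite[Lemma~5.8]{Lim1} one has that $S^{\lambda}{\downarrow_E}$ and $S^{(n-p,p)}{\downarrow_E}$ have mutually complementary stable generic Jordan types, and the latter is shown (via Lemma~\ref{L; YP1} and Theorem~\ref{InsertionRule}) to lie in $\{[1]^{n/p},\ [1]^{n/p-2}[2]^1\}$, so the stable generic Jordan type of $S^{\lambda}{\downarrow_E}$ lies in $\{[p-1]^{n/p},\ [p-2]^1[p-1]^{n/p-2}\}$ and is certainly not trivial. The paper instead argues by contradiction: assuming $D^{\lambda}{\downarrow_E}$ generically free forces $S^{\lambda}{\downarrow_E}$ and $D^{(n-p,p)}{\downarrow_E}$ to share a stable generic Jordan type, and then a careful enumeration (again via Theorem~\ref{InsertionRule}) of the possible types of $D^{(n-p,p)}{\downarrow_E}$ shows the two candidate sets are disjoint.
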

\begin{proof}
Denote the partitions $(n-p,p)$ and $(n-1,1)$ by $\mu$ and $\nu$ respectively. Let $E$ be $E_{n/p}$ and note that $\lambda$ has $p$-weight $\frac{n}{p}$. Set $u_\alpha$ by a generator set of $E$ and let $A$ be the corresponding algebra $\K\langle u_\alpha\rangle$. By Corollary \ref{L;Findpcore} and Theorem \ref{T;James},
\begin{align*}
S^{\lambda}\sim\begin{cases} D^{\lambda}+D^{\mu}+D^{(n)}, &\text{if}\ n\equiv p\ \pmod {p^2},\\
D^{\lambda}+D^{\mu}+D^{\nu}, &\text{if}\ n\equiv 2p \pmod {p^2},\\
D^{\lambda}+D^{\mu}, & \text{otherwise}.\tag{4.2}
\end{cases}
\end{align*}
Let $i$ be an integer where $0\leq i\leq p-1$. By $(4.1)$-$(4.2)$, $(2.4)$, Theorem \ref{T; Lucas},
\begin{align*}
\dim_\F D^{\lambda}\equiv\begin{cases}
p-4\ \ \ \pmod p, &\text{if}\ n\equiv 2p\pmod {p^2},\\
-2i-2 \pmod p,\ &\text{if}\ n\equiv ip\pmod {p^2},\ i\neq2.
\end{cases}
\end{align*}
So we have $p\nmid\dim_\F D^\lambda$ and $c_{\sym{n}}(D^{\lambda})=\frac{n}{p}$ unless $3<p$ and $n\equiv(p-1)p \pmod {p^2}$.

When $3<p$ and $n\equiv (p-1)p \pmod {p^2}$, suppose that $D^{\lambda}{\downarrow_E}$ is generically free. Then $S^{\lambda}{\downarrow_E}$ and $D^{\mu}{\downarrow_E}$ have same stable generic Jordan type by $(4.2)$ and Proposition \ref{P;sjt} (iv) (c). Following \cite[Lemma 5.8]{Lim1}, we know that $S^{\lambda}{\downarrow_E}$ and $S^{\mu}{\downarrow_E}$ have mutually complementary stable generic Jordan types. By Lemmas \ref{L; YP1} (i), \ref{L;(n-1,1)sjt} and Proposition \ref{P;sjt} (ii), notice that stable generic Jordan types of $Y^{\mu}{\downarrow_E}$ and $S_{\nu}{\downarrow_E}$ are $[1]^{\frac{n}{p}-1}$ and $[p-1]^1$ respectively. We apply Theorem \ref{InsertionRule} to the short exact sequence
$0\rightarrow S_{\nu}{\downarrow_A}\rightarrow Y^{\mu}{\downarrow_A}\rightarrow S_{\mu}{\downarrow_A}\rightarrow 0$ given by Lemma \ref{L; YP1} (ii) and see that stable generic Jordan type of $S_\mu{\downarrow_E}$ is in $\{[1]^{\frac{n}{p}}, [1]^{\frac{n}{p}-2}[2]^1 \}$. The fact and Proposition \ref{P;sjt} (ii) imply that stable generic Jordan type of $S^\mu{\downarrow_E}$ is also in $\{[1]^{\frac{n}{p}}, [1]^{\frac{n}{p}-2}[2]^1 \}$. According to above discussion, stable generic Jordan type of $S^{\lambda}{\downarrow_E}$ lies in the set $C$ defined by $\{[p-1]^{\frac{n}{p}}, [p-2]^1[p-1]^{\frac{n}{p}-2}\}$. We also observe that stable generic Jordan type of $D^\nu{\downarrow_E}$ lies in $\{[p-2]^1, [p-1]^2\}$ by Lemma \ref{L;(n-1,1)sjt} and applying Theorem \ref{InsertionRule} to the short exact sequence
$ 0\rightarrow D^{(n)}{\downarrow_{A}}\rightarrow S^{\nu}{\downarrow_A}\rightarrow D^{\nu}{\downarrow_A}\rightarrow 0.$
Now we apply Theorem \ref{InsertionRule} and Third Isomorphism Theorem to the short exact sequence
$0\rightarrow D^\nu{\downarrow_A}\rightarrow S^\mu{\downarrow_A}\rightarrow D^\mu{\downarrow_A}\rightarrow 0$
given by $(4.1)$ to determine the set of all allowable formal stable generic Jordan types that contains stable generic Jordan type of $D^\mu{\downarrow_E}$. Precisely, all members of the set are listed as follows: $$[1]^{\frac{n}{p}+2},\ [1]^{\frac{n}{p}}[2]^1,\ [1]^{\frac{n}{p}-1}[3]^1,\  [1]^{\frac{n}{p}-2}[4]^1,\ [1]^{\frac{n}{p}-2}[2]^2,\ [1]^{\frac{n}{p}-3}[2]^1[3]^1,\ [1]^{\frac{n}{p}-4}[2]^3.$$
Note that these formal Jordan types are all well defined since $n\equiv (p-1)p \pmod {p^2}$ and $5\leq p$. Name the set by $S$ and notice that $C\cap S=\varnothing$. It is a contradiction since we have known that $S^{\lambda}{\downarrow_E}$ and $D^{\mu}{\downarrow_E}$ have same stable generic Jordan type. So $D^{\lambda}{\downarrow_E}$ is not generically free. We have $c_{\sym{n}}(D^\lambda)=\frac{n}{p}$ by Proposition \ref{P;sjt} (vii).
\end{proof}

We now compute $c_{\sym{n}}(D^{(n-p-s,p+s)})$ when $2p+2s\leq n$. By Lemma \ref{JLW}, the partition $(p+s,p+s)$ has $p$-weight two if and only if $0<s<p-1$.
\begin{lem}\label{L;2p+2s}
Let $\lambda:=(p+s,p+s)$ have $p$-weight $w$. Then $c_{\sym{2p+2s}}(D^{\lambda})=w$.
\end{lem}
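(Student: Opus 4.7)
The plan is to split the argument according to whether $p\nmid\dim_{\F}D^{\lambda}$, and to fall back on a short-exact-sequence argument with $E=E_{2}$ when divisibility holds. First I would use Theorem \ref{T;James} and Lemma \ref{L;Dstructure} to identify the relevant composition factors: $S^{\lambda}\sim D^{\lambda}+D^{\mu_{1}}$ with $\mu_{1}:=(2p+s-1,s+1)$, $S^{\mu_{1}}\sim D^{\mu_{1}}+D^{\mu_{2}}$ with $\mu_{2}:=(2p+s,s)$, and $S^{\mu_{2}}\cong D^{\mu_{2}}$ for $0<s<p-1$ (with an extra $D^{(n)}$ factor when $s=p-1$). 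These yield the short exact sequences
\[
0\to D^{\mu_{1}}\to S^{\lambda}\to D^{\lambda}\to 0,\qquad 0\to D^{\mu_{2}}\to S^{\mu_{1}}\to D^{\mu_{1}}\to 0.
\]

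For the dimension argument, when $0<s\leq(p-1)/2$ or $s=p-1$, I would combine the above with $(2.4)$ to get $\dim_{\F}D^{\lambda}\equiv\dim_{\F}S^{\lambda}-\dim_{\F}S^{\mu_{1}}+\dim_{\F}S^{\mu_{2}}\pmod{p}$ (plus a $-1$ correction when $s=p-1$), and reduce via Lucas' Theorem (Theorem \ref{T; Lucas}). The identity $\binom{2s}{s-1}=\binom{2s}{s+1}$ collapses the first subcase to $4C_{s}\pmod{p}$ where $C_{s}=\binom{2s}{s}-\binom{2s}{s-1}$ is the Catalan number; since $2s<p$, Lucas forces $p\nmid C_{s}$. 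The second subcase reduces to a similarly nonzero residue (e.g.\ $-8\pmod{p}$ for $p>3$, or $1$ directly for $p=3$). In either subcase the remark following Proposition \ref{P;sjt}~(vii) gives $c_{\sym{n}}(D^{\lambda})=w$.

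For the remaining range $(p+1)/2\leq s\leq p-2$ (where all the relevant binomials vanish modulo $p$), I would take $E:=E_{2}$ and show $D^{\lambda}{\downarrow_{E}}$ is not generically free, giving the conclusion via Proposition \ref{P;sjt}~(vii). The $p$-adic expansions $\mu_{1}=p\cdot(1,0)+(p+s-1,s+1)$ and $\mu_{2}=p\cdot(2,0)+(s,s)$ (each summand is $p$-restricted in this range) combined with Theorem \ref{T; Youngvertices} give that $Y^{\mu_{1}}$ has vertex of $p$-rank $1$ while $Y^{\mu_{2}}$ has vertex conjugate to $E$. Lemma \ref{L;YongModules}~(ii) thus makes $Y^{\mu_{1}}{\downarrow_{E}}$ generically free, while Lemma \ref{L;YongModules}~(i) gives $Y^{\mu_{2}}\cong D^{\mu_{2}}$ with stable generic Jordan type $[1]^{a}$ for some $a>0$. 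Lemma \ref{R;Spechtfiltration}~(ii) applied to $\mu_{1}\lhd\mu_{2}\lhd(n)$ (whose hypotheses are checked using $\kappa_{p}((n))\neq(s,s)$ and $\varphi(n,s+1,s)=0$) supplies $0\to S^{\mu_{1}}\to Y^{\mu_{1}}\to S^{\mu_{2}}\to 0$, and Proposition \ref{P;sjt}~(iv)(b) forces $S^{\mu_{1}}{\downarrow_{E}}$ to have stable type $[p-1]^{a}$.

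Finally, I would establish $0\to S^{\lambda}\to Y^{\lambda}\to S^{\mu_{1}}\to 0$ from the observation that any partition dominating $\lambda=(p+s,p+s)$ has at most two parts and that $Y^{\lambda},Y^{\mu_{2}}$ are the only in-block Young summands of $M^{\lambda}$ by Theorem \ref{T;Henke}, with $Y^{\mu_{2}}\cong S^{\mu_{2}}$ absorbing the $S^{\mu_{2}}$ factor. Since $Y^{\lambda}$ has vertex $E$, Lemma \ref{L;YongModules}~(ii) gives $Y^{\lambda}{\downarrow_{E}}$ stable type $[1]^{b}$ for some $b>0$. Under the assumption that $D^{\lambda}{\downarrow_{E}}$ is generically free, Proposition \ref{P;sjt}~(iv)(c) applied to $0\to D^{\mu_{1}}\to S^{\lambda}\to D^{\lambda}\to 0$ identifies $S^{\lambda}{\downarrow_{E}}$ with $D^{\mu_{1}}{\downarrow_{E}}$ stably, and Theorem \ref{InsertionRule} applied to $0\to D^{\mu_{2}}\to S^{\mu_{1}}\to D^{\mu_{1}}\to 0$ pins down the stable type of $D^{\mu_{1}}{\downarrow_{E}}$ from the known types of the other two terms. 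A final application of Theorem \ref{InsertionRule} to $0\to S^{\lambda}\to Y^{\lambda}\to S^{\mu_{1}}\to 0$, using the pure $[1]$-block nature of $Y^{\lambda}{\downarrow_{E}}$, yields a contradiction. The hard part will be this final Klein-theorem bookkeeping, where the $[p-1]^{a}$ type at the top and the deduced bottom type must be shown incompatible with any Jordan structure consisting only of $[1]$-blocks and projectives.
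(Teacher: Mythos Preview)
Your approach diverges substantially from the paper's. The paper's proof is two lines: for $0<s<p-1$ the partition $(p+s,p+s)$ has weight $2$, and the result is imported wholesale from \cite[Theorem 4.9]{LimTan} (which handles simple modules in blocks with abelian defect groups); only for $s=p-1$ does the paper run the dimension count, obtaining $\dim_{\F}D^{\lambda}\equiv -8\pmod p$. Your dimension argument for $0<s\le (p-1)/2$ and $s=p-1$ is correct and self-contained (the Catalan identity is a nice touch), and your structural setup for $(p+1)/2\le s\le p-2$ is also correct: the filtrations $S^{\lambda}\sim D^{\lambda}+D^{\mu_1}$, $S^{\mu_1}\sim D^{\mu_1}+D^{\mu_2}$, $S^{\mu_2}\cong D^{\mu_2}$, the $p$-adic expansions of $\mu_1,\mu_2$, the vertex computations, and the two short exact sequences $0\to S^{\mu_1}\to Y^{\mu_1}\to S^{\mu_2}\to 0$ and $0\to S^{\lambda}\to Y^{\lambda}\to S^{\mu_1}\to 0$ all check out.

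The gap is exactly where you flag it. You have, over $\K\langle u_\alpha\rangle$, the stable types $[1]^{a}$ for $D^{\mu_2}$, $[p-1]^{a}$ for $S^{\mu_1}$, and $[1]^{b}$ for $Y^{\lambda}$, and under the assumption that $D^{\lambda}{\downarrow_E}$ is generically free you want the two short exact sequences to force incompatible stable types on $D^{\mu_1}\cong S^{\lambda}$ (stably). But Theorem \ref{InsertionRule} does \emph{not} pin down the stable type of $D^{\mu_1}$ from $0\to[1]^{a}\to[p-1]^{a}\to D^{\mu_1}\to 0$: iterated removal of $[1]$'s from $[p-1]^{a}[p]^{?}$ produces a large family of possible types (anything from $[p-2]^{a}$ to $[p-1]^{2a}$ and mixtures in between). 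Likewise the constraint from $0\to S^{\lambda}\to[1]^{b}[p]^{?}\to[p-1]^{a}[p]^{?}\to 0$ is weak, because the free part of $Y^{\lambda}$ can absorb a great deal. Concretely, for $p=5$, $s=3$ one computes $a=b=5$, and the two families of admissible types for $T$ are not obviously disjoint; ruling out every overlap would require control over the actual maps, not just the Jordan types. This is effectively re-deriving the relevant case of \cite{LimTan}, which uses quite different machinery. Unless you can produce a clean numerical invariant that separates the two families for all $a,b$ arising here, the argument does not close; citing \cite[Theorem 4.9]{LimTan} for the range $(p+1)/2\le s\le p-2$ is the honest fix.
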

\begin{proof}
When $0<s<p-1$, by \cite[Theorem 4.9]{LimTan}, one has $c_{\sym{2p+2s}}(D^{(p+s,p+s)})=2$. When $s=p-1$, let $\mu$ and $\nu$ be the partitions $(3p-2,p)$ and $(3p-1,p-1)$ respectively and note that $w=3$. By Corollary \ref{L;Findpcore} and Theorem \ref{T;James}, we have $S^\lambda\sim D^\lambda+D^\mu$, $S^\mu\sim D^\mu+D^\nu$, and $S^\nu\sim D^\nu+D^{(4p-2)}$. Using the relations, $(2.4)$ and Theorem \ref{Lucas}, one gets $\dim_\F D^\alpha\equiv -8\pmod p$, which implies that $c_{\sym{4p-2}}(D^{(2p-1,2p-1)})=3$. The lemma follows.
\end{proof}

\begin{prop}\label{P;p+s}
Let $2p+2s\leq n$ and $\lambda:=(n-p-s,p+s)$ have $p$-weight $w$. Then $c_{\sym{n}}(D^{\lambda})=w$.
\end{prop}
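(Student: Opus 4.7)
The base case $n = 2p + 2s$ is immediate from Lemma~\ref{L;2p+2s}, so I would assume $n > 2p + 2s$ throughout; set $\nu := (n - s, s)$. The strategy follows the template of Proposition~\ref{P;equal p} and Lemma~\ref{L;Base Induction}: establish $c_{\sym{n}}(D^\lambda) = w$ via a dimension count whenever $p \nmid \dim_\F D^\lambda$, and handle the residual cases by restriction to a suitable elementary abelian $p$-subgroup $E$ together with a Jordan-type contradiction driven by Theorem~\ref{InsertionRule}.

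First, I would use Theorem~\ref{T;James} to enumerate the composition factors of $S^\lambda$: the partition $(n-j, j)$ contributes iff $p + s - j \sqsubseteq_p n - 2j + 1$. A direct $p$-adic analysis, organised by the residue $r$ and informed by Lemma~\ref{L:pcore}, shows that the decomposition is $S^\lambda \sim D^\lambda + D^\nu$ in the two ``exceptional'' families below, with at most one additional composition factor in other situations that does not cause $p$ to divide $\dim_\F D^\lambda$. Combining equation~$(2.4)$, Lucas' theorem (Theorem~\ref{T; Lucas}), and recursive knowledge of $\dim_\F D^\nu \bmod p$ from Proposition~\ref{P;Less p}, I would compute $\dim_\F D^\lambda \bmod p$ and verify that it is nonzero outside the two exceptional families; then the standard upper bound on complexity forces $c_{\sym{n}}(D^\lambda) = w$ directly.

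The exceptional families, where $p \mid \dim_\F D^\lambda$, are precisely those anticipated by the paper: family~(a), with $0 < s \leq (p-1)/2$, $r = 2s - 1$, and $n \equiv p + r \pmod{p^2}$, covered by Lemmas~\ref{L; YP2} and~\ref{L;p=3,YP2}; and family~(b), with $(p+1)/2 \leq s < p$, $r = 2s - p - 1$, and $n \equiv 2p + r \pmod{p^2}$, covered by Lemmas~\ref{L;YP3} and~\ref{L;p=3,YP3}. In each case take $E$ to be $E_{(n - p - r)/p}$ respectively $E_{(n - 2p - r)/p}$, of $p$-rank equal to $w$. The relevant lemma gives $S^\nu \cong D^\nu \cong Y^\nu$ and a short exact sequence $0 \to S^\lambda \to Y^\lambda \to S^\nu \to 0$. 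Suppose for contradiction that $D^\lambda \downarrow_E$ is generically free. Applying Proposition~\ref{P;sjt} (iv)(c) to the composition sequence $0 \to D^\nu \to S^\lambda \to D^\lambda \to 0$ forces $S^\lambda \downarrow_E$ to inherit the stable generic Jordan type $[1]^b$ of $D^\nu \downarrow_E = Y^\nu \downarrow_E$, where $b$ is as in Lemma~\ref{L;p=3,YP2} or~\ref{L;p=3,YP3}; meanwhile $Y^\lambda \downarrow_E$ has stable generic Jordan type $[1]^a$ with $2b < a$ from the same lemmas. Restricting the Young-module sequence to the generic cyclic subalgebra $A = \K\langle u_\alpha\rangle$ and applying Theorem~\ref{InsertionRule} iteratively to the summands of $S^\lambda \downarrow_A \cong [1]^b[p]^{c_1}$ inserted into $Y^\lambda \downarrow_A \cong [1]^a[p]^c$ with quotient $S^\nu \downarrow_A \cong [1]^b[p]^{c_2}$: each $[1]$-insertion either decrements the $[1]$-count by one (situation (i)) or converts a $[c']$ into a $[c'-1]$ for some $c' \geq 2$ (situation (ii)), and each $[p]$-insertion merely removes a $[p]$. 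Balancing the counts of all $[c']$ with $1 < c' < p$ across the sequence forces the number of cascades $\ell := n_{p \to p-1} = \cdots = n_{2 \to 1}$ to satisfy the identity $a = 2b - p\ell$ with $\ell \geq 0$, in direct contradiction with the strict inequality $a > 2b$. Hence $D^\lambda \downarrow_E$ is not generically free, and Proposition~\ref{P;sjt} (vii) concludes $c_{\sym{n}}(D^\lambda) = w$.

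The main obstacle is the combinatorial bookkeeping of the second paragraph: tracking composition factors of $S^\lambda$ across all residues $r$ and verifying via Lucas' theorem that $p \nmid \dim_\F D^\lambda$ precisely outside families (a) and (b). Once this case analysis is settled, the generic Jordan type argument is essentially forced by the inequality $2b < a$ already provided by Lemmas~\ref{L;p=3,YP2} and~\ref{L;p=3,YP3}.
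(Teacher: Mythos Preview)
Your Jordan--type argument for the two ``exceptional'' families (a) and (b) is essentially the same as the paper's Cases~2 and~5, and the contradiction via the inequality $2b<a$ is exactly what the paper uses. The problem is the other half of your plan: your claim that $p\nmid\dim_\F D^\lambda$ in \emph{all} remaining cases is false.

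A concrete counterexample: take $p=5$, $r=0$, $s=1$, $n=20$, so $\lambda=(14,6)$. This lies in neither family~(a) (which would require $r=2s-1=1$) nor family~(b) (which would require $s\geq (p+1)/2=3$). Yet from (4.2), (4.1), (2.4) and Lucas one gets $\dim_\F D^\lambda\equiv -2i-2\equiv 0\pmod 5$ when $n\equiv (p-1)p\pmod{p^2}$; this is precisely the residual case treated by a separate Jordan--type argument in Lemma~\ref{L;Base Induction}. More generally, Cases~1, 3, 4, 6 of the paper's proof are full of subcases where $p\mid\dim_\F D^\lambda$, and the paper does not attempt to control the dimension there at all. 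Instead it uses the branching rule (Theorem~\ref{T;MBR}) to restrict $D^\lambda$ down to $\sym{n-s}$ or $\sym{n-r}$ (or further), exhibits a summand $D^{(n',p)}$ or $D^{(n',p+1)}$ with $p\mid n'$, and then invokes the previously established fact that this summand is not generically free over the relevant $E$ (from the proofs of Proposition~\ref{P;equal p} and Lemma~\ref{L;Base Induction}). The combinatorics of Theorem~\ref{T;MBR} is what makes this restriction strategy work, and it is the genuinely missing ingredient in your proposal: you would need either to supply the branching/restriction argument, or to discover a much more delicate Jordan--type analysis for each of the additional families where the dimension is divisible by~$p$.
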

\begin{proof}
Denote the partitions $(n-p-s,p)$, $(n-s,s)$ by $\mu$, $\nu$ respectively. For any integer $a$ satisfying condition $0\leq a\leq s$, denote the partitions $(n-p-s,p+s-a)$ by $\lambda_a$. Note that $\lambda_0=\lambda$ and $\lambda_s=\mu$. Let $t$ be an integer such that $0\leq t<s$. By Lemma \ref{L;2p+2s}, we assume $2p+2s<n$ and have the following cases.
\begin{enumerate}[\text{Case} 1:]
\item $0<s\leq \frac{r}{2}$
\end{enumerate}
Let $E$ be $E_{(n-r)/p}$ and note that $w=\frac{n-r}{p}$ by Lemma \ref{L:pcore}. Notice that $\sym{n-s}$ contains $E$ as $s<r$. We claim that $s-t-2\not\equiv r-s-1 \pmod p$ and $s-t-2\not\equiv r-s \pmod p$. Note that $0<2s\leq r$ and $0<r-2s+t+1=r-s-(s-t)+1<r<p$. The first relation follows. The other relation can be shown similarly. The claim is proved. The claim and Theorem \ref{T;MBR} show that $D^{\lambda_{t+1}}\mid D^{\lambda_t}{\downarrow_{\sym{n-t-1}}}$. We thus get $D^\mu\mid D^\lambda{\downarrow_{\sym{n-s}}}$ by induction. Note that $n-s\equiv r-s \pmod p$ and $0<r-s<p-1$. We thus get that $D^{\mu}{\downarrow_E}$ is not generically free by the proof of Proposition \ref{P;equal p}. Therefore, $D^{\lambda}{\downarrow_E}$ is not generically free by Proposition \ref{P;sjt} (iii), which implies that $c_{\sym{n}}(D^{\lambda})=\frac{n-r}{p}$ by Proposition \ref{P;sjt} (vii).
\begin{enumerate}[\text{Case} 2:]
\item $s=\frac{r+1}{2}$
\end{enumerate}
Let $E$ be $E_{(n-p-r)/p}$ and note that $w=\frac{n-p-r}{p}$ by Lemma \ref{L:pcore}. Set $u_\alpha$ by a generator set of $E$ and let $A$ be the corresponding algebra $\mathbb{K}\langle u_\alpha \rangle$. Note that $\nu$ is the unique partition of $n$ such that $\lambda\lhd \nu$ and $\kappa_p(\nu)=\kappa_p(\lambda)$ by Lemma \ref{L:pcore}. Therefore, we have $S^{\lambda}\sim D^{\lambda}+D^{\nu}$ if $n\equiv p+r \pmod {p^2}$ and $S^{\lambda}\cong D^{\lambda}$ otherwise by Theorem \ref{T;James}. The situation $S^{\lambda}\cong D^{\lambda}$ is trivial by Lemma \ref{L;YongModules} (iii). For the left situation, let $Y^{\lambda}{\downarrow_E}$ and $Y^{\nu}{\downarrow_E}$ have stable generic Jordan types $[1]^a$ and $[1]^b$ respectively. Note that $S^{\nu}\cong D^{\nu}\cong Y^{\nu}$ and $b<a$ by Lemmas \ref{L; YP2} (i) and \ref{L;p=3,YP2}. Suppose $D^{\lambda}{\downarrow_E}$ is generically free. Then both $S^{\lambda}{\downarrow_E}$ and $D^{\nu}{\downarrow_E}$ have stable generic Jordan type $[1]^b$ by the relation $S^{\lambda}\sim D^{\lambda}+D^{\nu}$ and Proposition \ref{P;sjt} (iv) (c). We apply Theorem \ref{InsertionRule} and Third Isomorphism Theorem to the short exact sequence
$0\rightarrow S^{\lambda}{\downarrow_A}\rightarrow Y^{\lambda}{\downarrow_A}\rightarrow S^{\nu}{\downarrow_A}\rightarrow 0$ given in Lemma \ref{L; YP2} (ii)
and get that allowable stable generic Jordan type of $S^\nu{\downarrow_E}$ has at least $a-b$ Jordan blocks of dimension one. It implies that $a-b\leq b$ as $S^\nu{\downarrow_E}$ has stable generic Jordan type $[1]^b$. The inequality contradicts with Lemma \ref{L;p=3,YP2}. So $D^{\lambda}{\downarrow_E}$ is not generically free, which shows that $c_{\sym{n}}(D^{\lambda})=\frac{n-p-r}{p}$ by Proposition \ref{P;sjt} (vii).
\begin{enumerate}[\text{Case} 3:]
\item $\frac{r+1}{2}<s\leq r+1$
\end{enumerate}
Let $E$ be $E_{(n-r)/p}$ and note that $\sym{n-r}$ contains $E$. Observe that $w=\frac{n-r}{p}$ by Lemma \ref{L:pcore}. When $r=0$ and $s=1$, we have
$c_{\sym{n}}(D^{(n-p-1,p+1)})=\frac{n}{p}$ by Lemma \ref{L;Base Induction}. For the case $0<r$, when $\frac{r+2}{2}<s\leq r+1$, let $b:=2s-r-2$ and note that $b<s$. Let $\eta$, $\zeta$, $\tau$ be the partitions $(n-p-s-1, p+r-s+2)$, $(n-p-s-1,p+r-s+1)$, $(n-p-r-1,p+1)$ respectively. We claim that $s-t-2\not\equiv r-s-1 \pmod p$ and $s-t-2\not\equiv r-s \pmod p$ if $0\leq t<b$. Indeed, we have $0<2s-r-2-t+1=2s-r-t-1\leq s-t<p$ when $0\leq t<b$. The first relation is obtained. The proof of the other relation is similar. The claim is shown. By the claim and Theorem \ref{T;MBR}, we have $D^{\lambda_{t+1}}\mid D^{\lambda_t}{\downarrow_{\sym{n-t-1}}}$ when $0\leq t<b$. We thus get $D^{\lambda_b}\mid D^{\lambda}{\downarrow_{\sym{n-b}}}$ by induction and deduce $D^{\tau}\mid D^{\lambda}{\downarrow_{\sym{n-r}}}$ if $s=r+1$. When $s<r+1$, note that $s-b-2\not\equiv r-s-1\pmod p$ while $s-b-2\equiv r-s \pmod p$. Then we have $D^{\eta}\mid D^{\lambda_b}{\downarrow_{\sym{n-b-1}}}$ by Theorem \ref{T;MBR}. Now $s-b-2\not\equiv r-s-2 \pmod p$ and $s-b-2\not\equiv r-s-1 \pmod p$. Hence $D^{\zeta}\mid D^{\eta}{\downarrow_{\sym{n-b-2}}}$ by Theorem \ref{T;MBR}. This pattern will repeatedly occur $r-s$ times when $2(r-s)$ nodes are removed from $\zeta$ by Theorem \ref{T;MBR}. We therefore get $D^{\tau}\mid D^{\lambda}{\downarrow_{\sym{n-r}}}$. When $s=\frac{r+2}{2}$, using Theorem \ref{T;MBR}, we can develop a similar pattern where $r$ nodes are removed from $\lambda$. This pattern will repeatedly occur $\frac{r}{2}$ times. We use the pattern to deduce same result that $D^\tau\mid D^{\lambda}{\downarrow_{\sym{n-r}}}$. The proof of Lemma \ref{L;Base Induction} implicitly says that $D^{\tau}{\downarrow_E}$ is not generically free. Then $D^{\lambda}{\downarrow_E}$ is not generically free by Proposition \ref{P;sjt} (iii). We thus get $c_{\sym{n}}(D^{\lambda})=\frac{n-r}{p}$ by Proposition \ref{P;sjt} (viii).
\begin{enumerate}[\text{Case} 4:]
\item $r+1<s\leq\frac{p+r}{2}$
\end{enumerate} Let $E$ be $E_{(n-p-r)/p}$ and note that $\sym{n-s}$ contains $E$ since $s\leq \frac{p+r}{2}$. Notice that $w=\frac{n-p-r}{p}$ by Lemma \ref{L:pcore}. As in Case $1$, a similar proof shows that $s-t-2\not\equiv r-s-1 \pmod p$ and $s-t-2\not\equiv r-s \pmod p$, which implies that $D^{\mu}\mid D^\lambda{\downarrow_{\sym{n-s}}}$ by Theorem \ref{T;MBR} and induction. Note that $n-s\equiv p+r-s\pmod p$ and $0<p+r-s<p-1$. Same reason given in Case $1$ says that $D^{\lambda}{\downarrow_E}$ is not generically free and $c_{\sym{n}}(D^{\lambda})=\frac{n-p-r}{p}$.
\begin{enumerate}[\text{Case} 5:]
\item $s=\frac{p+r+1}{2}$
\end{enumerate}
Let $E$ be $E_{(n-2p-r)/p}$ and note that $w=\frac{n-2p-r}{p}$ by Lemma \ref{L:pcore}. Notice that $\nu$ is the unique partition of $n$ such that $\lambda\lhd\nu$ and $\kappa_p(\nu)=\kappa_p(\lambda)$ by Lemma \ref{L:pcore}. We have $S^{\lambda}\sim D^{\lambda}+D^{\nu}$ if $n\equiv 2p+r \pmod {p^2}$ and $S^{\lambda}\cong D^{\lambda}$ otherwise by Theorem \ref{T;James}. One can follow the proof of Case $2$ to write a similar proof to show that $D^{\lambda}{\downarrow_E}$ is not generically free and $c_{\sym{n}}(D^{\lambda})=\frac{n-2p-r}{p}$. In the proof, Lemmas \ref{L;YP3} and \ref{L;p=3,YP3} play the roles of Lemmas \ref{L; YP2} and \ref{L;p=3,YP2} respectively.
\begin{enumerate}[\text{Case} 6:]
\item $\frac{p+r+1}{2}<s<p$
\end{enumerate}
Let $E$ be $E_{(n-p-r)/p}$ and note that $\sym{n-p-r}$ contains $E$. Observe that $w=\frac{n-p-r}{p}$ by Lemma \ref{L:pcore}. Let $b:=2s-p-r-2$ and note that $b<s$. Let $\eta$, $\zeta$ and $\tau$ be the partitions $(n-p-s-1, 2p+r-s+2)$, $(n-p-s-1,2p+r-s+1)$ and $(n-2p-r-1,p+1)$ respectively. When $\frac{p+r+2}{2}<s$, as in Case $3$, we can show $s-t-2\not\equiv r-s-1 \pmod p$ and $s-t-2\not\equiv r-s \pmod p$ if $0\leq t<b$ and thus deduce $D^{\lambda_b}\mid D^{\lambda}{\downarrow_{\sym{n-b}}}$. By similar reasons given in Case $3$, we have $D^{\eta}\mid D^{\lambda_b}{\downarrow_{\sym{n-b-1}}}$ and $D^{\zeta}\mid D^{\eta}{\downarrow_{\sym{n-b-2}}}$. The pattern will repeatedly occur $p+r-s$ times by Theorem \ref{T;MBR} when $2(p+r-s)$ nodes are removed from $\zeta$. Finally, we get $D^{\tau}\mid D^{\lambda}{\downarrow_{\sym{n-p-r}}}$. When $s=\frac{p+r+2}{2}$, we can develop a similar pattern as above by Theorem \ref{T;MBR} when $p+r$ nodes are removed from $\lambda$. This pattern will repeatedly occur $\frac{p+r}{2}$ times. We can use the pattern to deduce that $D^\tau\mid D^{\lambda}{\downarrow_{\sym{n-p-r}}}$. By same reason provided in Case $3$, we have $D^{\lambda}{\downarrow_E}$ is not generically free and $c_{\sym{n}}(D^{\lambda})=\frac{n-p-r}{p}$.
\end{proof}

\begin{lem}\label{L;Dimension(n-2p,2p)}
Let $3<p$, $4p\leq n$ and $\lambda:=(n-2p,2p)$. Then $c_{\sym{n}}(D^{\lambda})=\frac{n}{p}$ if $r=0$.
\end{lem}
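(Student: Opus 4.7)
The plan is to follow the template of Lemmas \ref{L;Pblock1} and \ref{L;Base Induction}. Since $r=0$ means $\kappa_p(\lambda)=\varnothing=\kappa_p((n))$, Corollary \ref{L;Findpcore} tells me that the $p$-regular two-part partitions in the block of $\lambda$ that dominate $\lambda$ are exactly $(n)$, $(n-1,1)$, $(n-p,p)$, $(n-p-1,p+1)$ and $\lambda$ itself. Using Theorem \ref{T;James} (the $\sqsubseteq_p$ criterion), I will first determine, as a function of the residue $a:=(n/p)\bmod p$, which of these appear as composition factors of $S^{\lambda}$, of $S^{(n-p-1,p+1)}$, and of $S^{(n-p,p)}$ (the last already done in Lemma \ref{L;Pblock1}).

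Next, combining this composition data with the dimension formula $(2.4)$ and Lucas' theorem (Theorem \ref{T; Lucas}), I compute $\dim_\F D^{\lambda}\pmod p$ residue-by-residue. For a generic value of $a$, only $D^{\lambda}$ and $D^{(n-p-1,p+1)}$ appear in $S^{\lambda}$, and a short calculation reduces $\dim_\F D^{\lambda}\pmod p$ to the polynomial $\tfrac{1}{2}(a^{2}+3a+4)$, which is nonzero modulo $p$ except for finitely many residues. Whenever $p\nmid \dim_\F D^{\lambda}$, the standard fact that the block of $\lambda$ has defect groups of $p$-rank $w=n/p$ forces $c_{\sym{n}}(D^{\lambda})=n/p$ without further work.

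For the residues where $p\mid\dim_\F D^{\lambda}$, I will take $E:=E_{n/p}$ and aim to prove that $D^{\lambda}\!\downarrow_E$ is not generically free; then Proposition \ref{P;sjt} (vii) closes the argument. The plan here is to exploit the non-generic-freeness of $D^{(n-p,p)}\!\downarrow_E$, established implicitly in the proof of Lemma \ref{L;Pblock1}, together with the stable generic Jordan type of $Y^{\lambda}\!\downarrow_E$ and $M^{(n-p-1,p+1)}\!\downarrow_E$ read off from Lemma \ref{PModules} and Theorem \ref{T; Youngvertices}. Applying Proposition \ref{P;sjt} (iii), (iv) to the short exact sequences coming from the Young filtration $(2.2)$ of $Y^{\lambda}$ (provided by Lemma \ref{R;Spechtfiltration}) should pin down the stable generic Jordan type of $S^{\lambda}\!\downarrow_E$; feeding this into the short exact sequence $0\to\mathrm{Rad}(S^{\lambda})\to S^{\lambda}\to D^{\lambda}\to 0$ and using Theorem \ref{InsertionRule} to enumerate the allowable formal Jordan types should contradict the assumption that $D^{\lambda}\!\downarrow_E$ is generically free.

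The main obstacle will be the exceptional residues just described. Unlike the situation in Lemma \ref{L;Base Induction}, the relevant block now contains up to four composition factors $D^{(n)},D^{(n-1,1)},D^{(n-p,p)},D^{(n-p-1,p+1)}$ besides $D^{\lambda}$, and $S^{(n-p-1,p+1)}$ itself may have a complicated socle series. Tracking which formal Jordan types of $D^{\lambda}\!\downarrow_E$ are compatible with Theorem \ref{InsertionRule} through several iterated short exact sequences is therefore the delicate step. If the Jordan-type bookkeeping proves too unwieldy, a fallback is to construct, via the modular branching rule \cite[Theorem 11.2.8]{A.K} applied to an appropriate simple module of $\sym{n-1}$, an indecomposable self-dual $\F\sym{n}$-module whose heart contains $D^{\lambda}$, in the spirit of the argument used in Lemma \ref{L;Pblock1}.
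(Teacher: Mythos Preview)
Your dimension strategy and the polynomial $\tfrac12(a^{2}+3a+4)$ for the generic residue $a:=n/p\bmod p$ are correct, and for $a\in\{0,1,2,3,4\}$ this matches the paper exactly (the values are $2,4,6,6,16\pmod p$, all nonzero for $p>3$). The paper, however, does not isolate the zeros of this polynomial. Instead it handles \emph{every} residue $4<a\le p-1$ uniformly by the induction--branching argument, i.e.\ precisely your fallback; this is cleaner because the argument is insensitive to whether $p\mid\dim_\F D^{\lambda}$.

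There are two genuine issues with your primary plan. First, Lemma~\ref{R;Spechtfiltration} is not available here: with $\mu:=(n-p-1,p+1)$ the partitions $(n-p,p)$ and $(n-1,1)$ lie strictly between $\mu$ and $(n)$ with the same $p$-core $(r)=\varnothing$, so condition~(c) fails and $Y^{\lambda}$ does not admit a two-step Specht filtration. Second, for $a>4$ one has $\text{Rad}(S^{\lambda})\cong D^{\mu}$, so the non-generic-freeness you must feed into the short exact sequence $0\to\text{Rad}(S^{\lambda})\to S^{\lambda}\to D^{\lambda}\to 0$ is that of $D^{(n-p-1,p+1)}{\downarrow_E}$, which comes from the proof of Lemma~\ref{L;Base Induction}, not that of $D^{(n-p,p)}{\downarrow_E}$ from Lemma~\ref{L;Pblock1}.

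Your fallback is the right idea but the roles are inverted. Following the template of the case $n\equiv(p-2)p\pmod{p^{2}}$ in Lemma~\ref{L;Pblock1}, one takes $\tau:=(n-2p-1,2p)$ and $\zeta:=(n-2p-1,2p+1)$; for $a>4$ one has $S^{\tau}\cong D^{\tau}$ and $S^{\zeta}\sim D^{\zeta}+D^{\lambda}$, and the block summand $N$ of $D^{\tau}{\uparrow^{\sym{n}}}$ is self-dual indecomposable with head and socle isomorphic to $D^{\lambda}$ and heart $D^{\mu}\oplus D^{\zeta}$. It is the heart, via $D^{\mu}{\downarrow_E}$ not generically free, that supplies the contradiction once one supposes $D^{\lambda}{\downarrow_E}$ generically free --- not the other way around.
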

\begin{proof}
Denote the partitions $(n-p-1,p+1)$, $(n-p,p)$ and $(n-1,1)$ by $\mu$, $\nu$ and $\eta$ respectively. Following Corollary \ref{L;Findpcore} and Theorem \ref{T;James},
\begin{align*}
S^{\lambda}\sim \begin{cases} D^{\lambda}+D^{\mu}+D^{\eta}+D^{(n)}, &\text{if}\ n\equiv 2p \pmod {p^2},\\
D^{\lambda}+D^{\mu}+D^{\nu}, &\text{if}\ n\equiv3p\pmod {p^2},\\
D^{\lambda}+D^{\mu}, & \text{otherwise}.\tag{4.3}\end{cases}
\end{align*}
According to $(4.1)$-$(4.3)$, $(2.4)$ and Theorem \ref{T; Lucas},
\begin{align*}
\dim_\F D^{\lambda}\equiv\begin{cases} 2 \ \ \ \pmod p, &\text{if}\ n\equiv 0 \ \ \ \ \ \ \ \ \ \pmod {p^2},\\
4\ \ \ \pmod p, &\text{if}\ n\equiv p \ \ \ \ \ \ \ \ \ \pmod {p^2},\\
6 \ \ \ \pmod p, & \text{if}\ n\equiv 2p\ \text{or}\ 3p \pmod {p^2},\\
16 \ \pmod p, &\text{if}\ n\equiv 4p \ \ \ \ \ \ \ \ \pmod {p^2}.
\end{cases}
\end{align*}
Let $i$ be an integer. When $n\equiv ip\pmod {p^2}$ and $0\leq i\leq 4$, we conclude that $p\nmid \dim_\F D^{\lambda}$ and $c_{\sym{n}}(D^{\lambda})=\frac{n}{p}$. For the left situations where $n\equiv ip \pmod {p^2}$ and $4<i\leq p-1$, let $\zeta$ and $\tau$ be the partitions  $(n-2p-1,2p+1)$, $(n-2p-1,2p)$ respectively and assign $E$ to be $E_{n/p}$. We have $S^{\lambda}\sim D^{\lambda}+D^{\mu}$, $S^{\zeta}\sim D^{\zeta}+D^{\lambda}$ and $S^{\tau}\cong D^{\tau}$ by Corollary \ref{L;Findpcore} and Theorem \ref{T;James}. The proof of Lemma \ref{L;Base Induction} implicitly says that $D^{\mu}{\downarrow_E}$ is not generically free. According to these facts, one can mimic the proof of the situation $n\equiv (p-2)p \pmod {p^2}$ of Lemma \ref{L;Pblock1} to show that $D^{\lambda}{\downarrow_E}$ is not generically free. The proof of the lemma is now complete by applying Proposition \ref{P;sjt} (vii) to $D^{\lambda}{\downarrow_E}$.
\end{proof}

\begin{prop}\label{P;equal 2p}
Let $3<p$, $4p\leq n$ and $\lambda:=(n-2p,2p)$. Then $c_{\sym{n}}(D^{\lambda})=\frac{n-r}{p}$.
\end{prop}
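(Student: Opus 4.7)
The plan is to reduce each residue class of $n$ modulo $p$ to Lemma \ref{L;Dimension(n-2p,2p)}, mirroring the blueprint used for $\lambda=(n-p,p)$ in Proposition \ref{P;equal p}. The case $r=0$ is precisely Lemma \ref{L;Dimension(n-2p,2p)}, so it suffices to handle the cases $0<r<p-1$ and $r=p-1$.

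For $0<r<p-1$, I apply Lemma \ref{L;Sdecomposition} with $m=2$ to deduce that $D^{(n-2p-r,2p)}\mid D^{\lambda}{\downarrow_{\sym{n-r}}}$. Since $n\geq 4p$ and $r<p$ one checks $n-r\geq 4p$, so Lemma \ref{L;Dimension(n-2p,2p)} applies to $(n-r-2p,2p)$ as a partition of $n-r\equiv 0\pmod p$. Setting $E:=E_{(n-r)/p}\subset\sym{n-r}\subset\sym{n}$, the proof of Lemma \ref{L;Dimension(n-2p,2p)} implicitly furnishes $D^{(n-r-2p,2p)}{\downarrow_{E}}$ as not generically free. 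By Proposition \ref{P;sjt} (iii), the restriction $D^{\lambda}{\downarrow_{E}}$ is not generically free either, so Proposition \ref{P;sjt} (vii) combined with the defect-group upper bound $w=(n-r)/p$ yields $c_{\sym{n}}(D^{\lambda})=(n-r)/p$.

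For $r=p-1$, I use dimension counting. By Corollary \ref{L;Findpcore}, the partitions of $n$ sharing $\kappa_p(\lambda)=(p-1)$ are exactly those of the form $(n-kp,kp)$, so the candidate composition factors of $S^{\lambda}$ in the same block reduce to $D^{\lambda}$, $D^{(n)}$, and $D^{(n-p,p)}$. Applying the $\sqsubseteq_p$ criterion of Theorem \ref{T;James} with $n=(p-1)+pm$, one finds that $D^{(n)}$ appears iff $m\equiv 1\pmod p$ (equivalently $n\equiv 2p-1\pmod{p^{2}}$) and $D^{(n-p,p)}$ appears iff $m\equiv 2\pmod p$ (equivalently $n\equiv 3p-1\pmod{p^{2}}$); in every other residue class $S^{\lambda}\cong D^{\lambda}$ and Lemma \ref{L;YongModules} (iii) closes the case. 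In the two exceptional sub-cases, I combine $(2.4)$ with Theorem \ref{T; Lucas} to verify $p\nmid\dim_{\F}D^{\lambda}$, which forces $c_{\sym{n}}(D^{\lambda})$ to attain the $p$-rank $(n-p+1)/p$ of the defect group.

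The delicate step is the sub-case $n\equiv 3p-1\pmod{p^{2}}$, because peeling $D^{(n-p,p)}$ off $S^{\lambda}$ requires $\dim_{\F}D^{(n-p,p)}$ modulo $p$, obtained from a secondary application of Theorem \ref{T;James} to $(n-p,p)$. The verdict depends on whether $m\equiv -1\pmod p$, a coincidence happening precisely when $p=3$; there $D^{(n-p,p)}$ turns out to have dimension divisible by $p$, so the Lucas-based cancellation must be carried out against $\dim_{\F}S^{\lambda}$ directly, whereas for $p\geq 5$ one has $S^{(n-p,p)}\cong D^{(n-p,p)}$ and a clean computation. Both branches ultimately give $p\nmid\dim_{\F}D^{\lambda}$, completing the proof.
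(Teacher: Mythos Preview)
Your proof is correct and follows essentially the same route as the paper: the case $r=0$ is Lemma \ref{L;Dimension(n-2p,2p)}; for $0<r<p-1$ you invoke Lemma \ref{L;Sdecomposition} with $m=2$ and pull non-generic-freeness of $D^{(n-r-2p,2p)}{\downarrow_E}$ from the proof of Lemma \ref{L;Dimension(n-2p,2p)}; and for $r=p-1$ you read off the composition structure of $S^\lambda$ from Corollary \ref{L;Findpcore} and Theorem \ref{T;James} and finish the two non-simple sub-cases by a Lucas computation giving $\dim_\F D^\lambda\equiv p-2\pmod p$.

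One remark: your final paragraph about the ``delicate step'' at $p=3$ is unnecessary and slightly off. The hypothesis is $3<p$, so $p=3$ never arises; moreover, even the premise is mistaken, since in the sub-case $n\equiv 3p-1\pmod{p^2}$ one has $n+1\equiv 3p\pmod{p^2}$, and the $\sqsubseteq_p$ test for $(S^{(n-p,p)}:D^{(n)})$ asks whether $p\sqsubseteq_p n+1$, which fails because the $p^1$-digit of $p$ is $1$ while that of $n+1$ is $3$ --- this holds for every odd prime, not just $p\geq 5$. So $S^{(n-p,p)}\cong D^{(n-p,p)}$ unconditionally in that sub-case, and the clean computation you describe for $p\geq 5$ is the whole story.
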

\begin{proof}
Denote the partitions $(n-2p-r,2p)$ and $(n-p,p)$ by $\mu$ and $\nu$ respectively. We only need to deal with the case $r\neq0$ by Lemma \ref{L;Dimension(n-2p,2p)}. When $1<r<p-1$,
Take $E$ to be $E_{(n-r)/p}$ and note that $\sym{n-r}$ contains $E$. By Lemma \ref{L;Sdecomposition}, we have $D^\mu{\downarrow_E}\mid D^\lambda{\downarrow_E}$. The proof of Lemma \ref{L;Dimension(n-2p,2p)} implicitly gives that $D^\mu{\downarrow_E}$ is not generically free. Therefore, we obtain that $D^{\lambda}{\downarrow_E}$ is not generically free by Proposition \ref{P;sjt} (iii). We get $c_{\sym{n}}(D^\lambda)=\frac{n-r}{p}$ by applying Proposition \ref{P;sjt} (vii) to $D^\lambda{\downarrow_E}$.

When $r=p-1$, using Corollary \ref{L;Findpcore} and Theorem \ref{T;James},
\begin{align*}
S^{\lambda}\sim \begin{cases} D^{\lambda}+D^{(n)}, & \text{if}\ n\equiv 2p-1 \pmod {p^2},\\
D^{\lambda}+D^{\nu}, & \text{if}\ n\equiv 3p-1 \pmod {p^2},\\
D^{\lambda}, & \text{otherwise}.\end{cases}
\end{align*}
The situation $S^{\lambda}\cong D^{\lambda}$ is trivial by Lemma \ref{L;YongModules} (iii). For the left situations, we deduce that $\dim_\F D^{\lambda}\equiv p-2 \pmod p$ by above relations, $(2.4)$ and Theorem \ref{T; Lucas}, which implies $c_{\sym{n}}(D^\lambda)=\frac{n-r}{p}$. The proof is now complete.
\end{proof}

As the complexity of the trivial $\F\sym{n}$-module is clear, we now finish the proof of Theorem \ref{T;C} by collecting Propositions \ref{P;Less p},\ \ref{P;equal p},\ \ref{P;p+s} and \ref{P;equal 2p}.

\end{document}